\definecolor{red}{rgb}{1,0,0}
\newtheorem{thm}[subsection]{Theorem}
\newtheorem{defn}[subsection]{Definition}
\newtheorem{prop}[subsection]{Proposition}
\newtheorem{cor}[subsection]{Corollary}
\newtheorem{lemma}[subsection]{Lemma}
\theoremstyle{definition}  
\newtheorem{ex}[subsection]{Example}
\newtheorem{remark}[subsection]{Remark}
\newcommand{\dfn}{\textbf} 
\newcommand{\mdfn}[1]{\dfn{\mathversion{bold}#1}} 
\newcommand{\Smash}             {\wedge}
\newcommand{\Wedge}             {\vee}
\DeclareRobustCommand{\bigWedge}{\bigvee}
\newcommand{\iso}               {\cong}
\newcommand{\cat}{\EuScript}    
\newcommand{\cC}{{\cat C}}
\newcommand{\cM}{{\cat M}}
\newcommand{\Spectra}{{\cat Spectra}}
\newcommand{\Motspectra}{{\cat MotSpectra}}
\DeclareMathOperator{\Rad}{Rad}
\newcommand{\sSet}{s{\cat Set}}
\newcommand{\sPre}{sPre}
\newcommand{\Ho}{\text{Ho}\,}
\newcommand{\field}[1]  {\mathbb #1} 
\newcommand{\A}         {\field A}
\newcommand{\R}         {\field R}
\newcommand{\Q}         {\field Q}
\newcommand{\HH}        {\field H}
\newcommand{\OO}        {\field O}
\newcommand{\Z}         {\field Z}
\newcommand{\C}         {\field C}
\newcommand{\F}         {\field F}
\renewcommand{\P}         {\field P}
\DeclareMathOperator*{\colim}{colim}
\DeclareMathOperator*{\hocolim}{hocolim}
\DeclareMathOperator{\Spec}{Spec}
\DeclareMathOperator{\adj}{adj}
\DeclareMathOperator{\id}{id}
\DeclareMathOperator{\inv}{inv}
\newcommand{\ra}{\rightarrow}                   
\newcommand{\lra}{\longrightarrow}              
\newcommand{\la}{\leftarrow}                    
\newcommand{\lla}{\longleftarrow}               
\newcommand{\llra}[1]{\stackrel{#1}{\lra}}      
\newcommand{\llla}[1]{\stackrel{#1}{\lla}}      
\newcommand{\cof}{\rightarrowtail}              
\newcommand{\bcof}{\leftarrowtail}              
\newcommand{\inc}{\hookrightarrow}              
\newcommand{\blank}{-}                          
\newcommand{\und}{\underline}
\newcommand{\ovcat}{\downarrow}
\newcommand{\he}{\simeq}
\newcommand{\Sm}{Sm}
\newcommand{\rea}[1]{|{#1}|}             
\newcommand{\map}{\rightarrow}
\newcommand{\ceck}[1]{\Cech(#1)}         
\newcommand{\oceck}[1]{\Cech^{o}(#1)}    
\newcommand{\oreal}[1]{\rea{\oceck{U}}}  
\newcommand{\creal}[1]{\rea{\ceck{U}}}   
\newcommand{\Cech}{\check{C}}
\renewcommand{\top}{\textrm{top}}
\numberwithin{equation}{subsection}
\newenvironment{myequation}
  {\addtocounter{subsection}{1}\begin{eqnarray}}
  {\end{eqnarray}$\!\!$}
\renewcommand{\S}{\mathbb{S}}
\newcommand{\SL}{SL}
\newcommand{\qf}[1]{{\langle{#1}\rangle}}
\newcommand{\GW}{GW}
\newcommand{\lab}[1]{{\qf{#1}}}
\DeclareMathOperator{\Cof}{Cof}
\newcommand{\parrow}{$\xymatrixcolsep{1.2pc}\xymatrix{{} \ar@{=>}[r] &
{}}$}
\newcommand{\lparrow}{$\xymatrixcolsep{1.2pc}\xymatrix{{}  & {}
\ar@{=>}[l]}$}
\newcommand{\swparrow}{$\xymatrixcolsep{1pc}\xymatrix{{}  & {}
\ar@{=>}[dl] \\
{}& {}}$}
\begin{document}

\title{Motivic Hopf elements and relations}

\author{Daniel Dugger}
\author{Daniel C. Isaksen}

\address{Department of Mathematics\\ University of Oregon\\ Eugene, OR
97403}

\address{Department of Mathematics\\ Wayne State University\\
Detroit, MI 48202}

\email{ddugger@math.uoregon.edu}

\email{isaksen@math.wayne.edu}

\subjclass[2000]{14F42, 55Q45}

\keywords{motivic stable homotopy group, Hopf map}

\begin{abstract}
We use Cayley-Dickson algebras to produce Hopf elements $\eta$, $\nu$,
and $\sigma$
in the motivic stable homotopy groups of spheres, and we prove the
relations $\eta\nu=0$ and $\nu\sigma=0$ by geometric arguments.  Along
the way we develop several basic facts about the motivic stable
homotopy ring.
\end{abstract}

\maketitle

\tableofcontents

\section{Introduction}

The work of Morel and Voevodsky \cite{MV,V} has
shown how to construct from the category $\Sm/k$ of smooth schemes
over a commutative ring $k$ 
a corresponding motivic stable homotopy category.  This
comes to us as the homotopy category of a model category of
motivic symmetric spectra \cite{Ho,J}.  Among the motivic spectra are certain
``spheres'' $S^{p,q}$ for all $p,q\in \Z$, so that for any motivic
spectrum $X$ one obtains the bi-graded stable homotopy groups
$\pi_{*,*}(X)=\oplus_{p,q} [S^{p,q},X]$.  This paper deals with the
construction of some elements and relations in the motivic stable
homotopy ring $\pi_{*,*}(S)$, where $S$ is the sphere spectrum.

\medskip

Classically there are two ways of trying to compute stable homotopy
groups.  First there was the hands-on approach
of Hopf, Toda, Whitehead, and others, where
one constructs explicit elements and explicit relations.  
Of course
this is difficult and painstaking.  
Later on, Serre's thesis, and its ultimate
realization in the Adams spectral sequence, greatly reduced the
difficulties in calculation---but at the expense of computing the
homotopy groups of the completions $S^{\Smash}_p$, not $S$ itself.
Fortunately these things are closely related: $\pi_j(S^{\Smash}_p)$ is
just the $p$-completion of $\pi_j(S)$.  

In the motivic setting the analog of the Adams spectral sequence was
explored in \cite{DI} over an
algebraically closed field.  
For other fields the computations are much more
challenging, even for fields like $\R$ and $\Q$.  The motivic
Adams-Novikov spectral sequence over algebraically closed fields was
considered in \cite{HKO}. Recent work of Ormsby-\O stvaer studies
related issues over the $p$-adic fields $\Q_p$ \cite{OO}.

In the present paper our goal is to explore a little of the
``hands-on'' approach of Hopf, Toda, and Whitehead to motivic homotopy groups.  
That is to say, our goal is
to construct very explicit elements of these groups and to demonstrate
some relations that they satisfy.
Most of our results work over an arbitrary base;
equivalently, they work over the universal base $\Z$.  But in practice
it is often useful to  assume that the base $k$ is either a field 
or the integers $\Z$.
Occasionally we will restrict to the case of a field, for purposes of
exposition.

\medskip

In comparison to Adams spectral sequence computations, the hands-on
constructions considered in this paper are very grueling.  The ratio
of effort versus payoff is fairly large.
For this reason we give a few remarks about the motivation for
pursuing this line of inquiry.  

A drawback of the Adams spectral sequence methods is that the spectral
sequences converge only to the homotopy groups of a suitable completion
$S^{\Smash}_{H}$, based on the choice of a prime $p$.  Unlike the
classical case, appropriate finiteness theorems are not available and
{\it a priori\/} there can be a significant difference between the
motivic homotopy groups themselves, their completions, and the
homotopy groups of the completed sphere spectrum.  
The motivic Adams spectral sequences compute highly
interesting objects, regardless of their exact relationship to the
motivic stable homotopy groups.  For example, one can use these
motivic spectral sequences  to learn about classical and equivariant stable
homotopy theory, even without identifying the motivic completion
$S^{\Smash}_{H}$ precisely.  But while these techniques lead to
interesting results, one still cannot help but wonder about the nature
of the ``true'' motivic homotopy groups.

One important aspect of the motivic stable homotopy groups is that
they act as operations on every (generalized) motivic cohomology
theory.  And although it is tautological, it is useful to keep in mind
that the motivic stable homotopy ring $\pi_{*,*}(S)$ {\it equals\/}
the ring of universal motivic cohomology operations.  Studying this
ring thereby gives us potential tools relevant to algebraic $K$-theory
and algebraic cobordism, for example.  In contrast, studying the
motivic homotopy groups of the Adams completions only give tools
relevant to completed versions of $K$-theory and cobordism.

Another drawback of the motivic Adams spectral sequence approach is
that it only applies in situations where one has very detailed
information about the structure of the motivic Steenrod algebra.  This
information is available when working over an essentially smooth
scheme over a field whose characteristic is different from the chosen
prime $p$
\cite{HKO2}.  However, it is not clear whether these results can be
extended to schemes that are not defined over a field, such as $\Spec
\Z$.  Moreover, when $p$ equals the characteristic of the base field
the structure of the motivic Steenrod algebra is likely to be more
complicated.

\medskip

\subsection{Background}
It follows from Morel's connectivity theorem \cite{morel-stable} 
that the motivic stable homotopy groups of spheres vanish in a certain
range: $\pi_{p,q}(S)=0$ for
$p<q$.  The group $\oplus_p \pi_{p,p}(S)$ is called the ``$0$-line'',
and was completely determined by Morel.  It will be useful to briefly
review this.

Recall that $S^{1,1}=(\A^1-0)$.  For each $a\in k^{\times}$ let $\rho_a\colon
S^{0,0}\ra S^{1,1}$ be the map that sends the basepoint to $1$ and the
nonbasepoint to $a$.  This gives a homotopy element $\rho_a$ in
$\pi_{-1,-1}(S)$. 
We write $\rho$ for $\rho_{-1}$ because, 
as we will see, this element plays a special role.
 
Furthermore, performing the Hopf construction (cf. Appendix~\ref{se:Hopf}) on the multiplication
map $(\A^1-0)\times (\A^1-0)\ra (\A^1-0)$ gives a map $\eta\colon S^{3,2}\ra
S^{1,1}$, and therefore a corresponding element $\eta$ in
$\pi_{1,1}(S)$.  
Finally, let
$\epsilon\colon S^{1,1}\Smash S^{1,1}\ra S^{1,1}\Smash S^{1,1}$ be
the twist map.  It represents an element in $\pi_{0,0}(S)$.

Morel's theorem \cite{M1} is the following.

\begin{thm}[Morel \cite{M2}]
\label{th:Morel}
Let $k$ be a perfect field whose characteristic is not 2.
The ring $\oplus_n \pi_{n,n}(S)$ is the free associative algebra
generated by the elements $\eta$ and $\rho_a$ (for all $a$ in $k^\times$)
subject to the following relations:
\begin{enumerate}[(i)]
\item $\eta\rho_a=\rho_a\eta$ for all $a$ in $k^\times$;
\item $\rho_a\cdot \rho_{1-a}=0$ for all $a\in k-\{0,1\}$;
\item $\eta^2\rho+2\eta=0$;
\item $\rho_{ab}=\rho_a+\rho_b+\eta\rho_a\rho_b$, for all $a,b\in
k^\times$;
\item $\rho_1=0$.
\end{enumerate}
Additionally, one has $\epsilon=-1-\rho\eta$.  
\end{thm}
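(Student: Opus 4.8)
The plan is to recast the statement around Morel's identification of the $0$-line with Milnor--Witt $K$-theory, separating the ``soft'' part (writing down a comparison homomorphism and verifying the relations) from the ``hard'' part (showing that homomorphism is an isomorphism). Recall that, as Morel defines it, $K^{MW}_*(k)$ is the graded ring with a generator $[a]$ in degree $+1$ for each $a\in k^\times$ and a single generator $\eta$ in degree $-1$, subject to the relations (a) $[a][1-a]=0$ for $a\neq 0,1$; (b) $[ab]=[a]+[b]+\eta[a][b]$; (c) $[a]\eta=\eta[a]$; and (d) $\eta h=0$, where $h:=2+\eta[-1]$. The first, purely algebraic, step is to match these with the relations of the theorem under the correspondence $\rho_a\leftrightarrow[a]$, $\eta\leftrightarrow\eta$: relation (i) is (c), (ii) is (a), (iv) is (b), (v) is the normalization $[1]=0$, and (iii) becomes (d) once one expands $h=2+\eta[-1]=2+\eta\rho$ with $\rho=\rho_{-1}$. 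With this dictionary the theorem reduces to the single claim that $[a]\mapsto\rho_a$, $\eta\mapsto\eta$ defines a graded ring homomorphism $\bigoplus_n K^{MW}_{-n}(k)\to\bigoplus_n\pi_{n,n}(S)$ which is an isomorphism in each weight $n$; the freeness-and-relations assertion then follows formally.

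To produce the comparison map $\Phi$ one checks that (a)--(d) hold in $\bigoplus_n\pi_{n,n}(S)$ after substituting $\rho_a$ for $[a]$. Here $\rho_a\eta=\eta\rho_a$ is graded commutativity in the relevant bidegree; $\rho_1=0$ is immediate, since that representative is constant at the basepoint; the Steinberg relation $\rho_a\rho_{1-a}=0$ I would prove by the argument familiar from Milnor $K$-theory, using the $\A^1$-family $t\mapsto(t,1-t)$ on $\A^1-\{0,1\}$ to exhibit the relevant pinch map as null; and the logarithmic relation $\rho_{ab}=\rho_a+\rho_b+\eta\rho_a\rho_b$ I would read off the standard stable splitting $\Si(\Gm\times\Gm)\simeq\Si\Gm\vee\Si\Gm\vee\Si(\Gm\wedge\Gm)$, since the map $(a,b)\colon S^{0,0}\to\Gm\times\Gm$ has wedge components $\rho_a,\rho_b$ and smash component $\rho_a\rho_b$, the multiplication $\mu\colon\Gm\times\Gm\to\Gm$ restricts to the identity on each wedge summand and, by its very definition, to the Hopf map $\eta$ on $\Gm\wedge\Gm$, and $\rho_{ab}=\mu\circ(a,b)$. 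For the last relation, $\eta h=0$, I would use the symmetry $\eta\circ\tau=\eta$ of the Hopf class (valid because $\mu$ is commutative, $\tau$ being the twist on $\Gm\wedge\Gm$), which gives $(1-\tau)\eta=0$; here $\tau=\epsilon$ is computed to be $-\qf{-1}$ by writing $\P^1\simeq S^1_s\wedge\Gm$ (the twist on $\P^1\wedge\P^1$ is $\qf{-1}$, the class of the determinant of the coordinate transposition of $\A^2$ read off its Thom space, while the twist on $S^1_s\wedge S^1_s$ is $-1$), and the relation $\qf{-1}=1+\eta\rho$ follows from comparing multiplication by $-1$ on $\P^1$ with $\rho$ through the cofiber sequence $\Gm\to\A^1\to\P^1$; substituting then gives $(2+\eta\rho)\eta=\eta^2\rho+2\eta=0$.

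The content of the theorem is that $\Phi$ is an isomorphism, and here I would rely on Morel's machinery. The outline: (1) Morel's stable $\A^1$-connectivity theorem shows $S$ is connective and exhibits $\underline{\pi}_0(S)_*$ as a strictly $\A^1$-invariant sheaf of graded rings --- indeed a homotopy module; (2) one shows this sheaf is generated, within the category of such sheaves, by the classes $\rho_a$ and $\eta$, arguing by induction on weight through the cofiber sequences relating $(\A^1-0)^{\wedge n}$, $\A^n$, and $\A^n/(\A^n-0)$, which yields surjectivity of $\Phi$ after sheafification and hence on $k$-points; and (3) one produces a one-sided inverse from the Gersten/residue structure, invoking Morel's structure theory for strictly $\A^1$-invariant sheaves --- Gabber's geometric presentation lemma, the construction of transfers, and the contraction functor $M\mapsto M_{-1}$ --- to the effect that the unramified sheaf $\underline{K}^{MW}_*$ is universal among such sheaves carrying a system of symbols satisfying (a)--(d), whence the surjection out of it must be injective. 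I expect the genuine obstacle to be precisely this last step: it is where perfectness of $k$ and $\chara k\neq 2$ enter, and there is no shortcut around the full theory of unramified Milnor--Witt $K$-theory.

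Finally, the supplementary identity falls out of the second paragraph: $\epsilon=\tau=-\qf{-1}=-(1+\eta\rho)=-1-\rho\eta$, using $\rho\eta=\eta\rho$ from (i). This completes the plan.
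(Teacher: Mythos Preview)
The paper does not prove this theorem. It is stated with attribution to Morel \cite{M2} and used as input throughout; no argument is given. So there is nothing to compare your proposal against in this paper.

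That said, your outline is a fair summary of how Morel's proof actually proceeds: one sets up the dictionary with $K^{MW}_*(k)$, verifies the relations in $\pi_{*,*}(S)$ to get a comparison map, and then invokes the heavy machinery (stable $\A^1$-connectivity, strictly $\A^1$-invariant sheaves, unramified Milnor--Witt $K$-theory) to show it is an isomorphism. One small point: your claim that $\rho_a\eta=\eta\rho_a$ follows from ``graded commutativity in the relevant bidegree'' is not quite complete. The commutativity rule (Proposition~\ref{pr:commute}) gives $\rho_a\eta=\eta\rho_a\cdot\epsilon^{-1}$, and you still need $\epsilon\eta=\eta$ (which is Lemma~\ref{lem:epsilon-eta}, coming from the commutativity of $\mu$ on $\Gm$) to finish. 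This is minor, and you essentially invoke that same identity later anyway when deriving $\eta h=0$.
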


The relations in Theorem \ref{th:Morel} have a number of algebraic consequences,
some of which are interesting for their own sakes. 
For example, it follows through a lengthy  chain of manipulations
that $\rho_a \rho_b = \epsilon \rho_b \rho_a$
\cite[Lemma 2.7(3)]{morel-alg-top}.
This is a special case of a more general formula 
from Proposition \ref{pr:commute}
concerning commutativity in the motivic stable homotopy ring.

There is a map of symmetric monoidal categories 
\[ \Ho(\Spectra)\ra
\Ho(\Motspectra)
\]  
that sends a spectrum to the corresponding
``constant presheaf''.  The technical details are unimportant here,
only that this gives a map $\pi_n(S)\ra \pi_{n,0}(S)$ from the
classical stable homotopy groups to their motivic analogs.   For an
element $\theta\in \pi_n(S)$ let us write $\theta_{top}$ for its image
in $\pi_{n,0}(S)$.  So, for example, we have the elements
$\eta_{top}$ in $\pi_{1,0}(S)$, $\nu_{top}$ in $\pi_{3,0}(S)$, and
$\sigma_{top}$ in $\pi_{7,0}(S)$.  

At this point our exposition has reached the limit of what is
available in the literature.  
No complete computation has been made of any stable motivic homotopy
group $\pi_{p,q}(S)$ for $p>q$.  (For some computations of unstable
homotopy groups, though, see \cite{AF}).

\subsection{Statements of results}

Using a version of Cayley-Dickson algebras we construct elements
$\nu$ in $\pi_{3,2}(S)$ and $\sigma$ in $\pi_{7,4}(S)$.  Taken together
with $\eta$ in $\pi_{1,1}(S)$ we call these the \dfn{motivic Hopf
elements}.    There is also a zeroth Hopf element: classically this is
$2$ in $\pi_0(S)$, but in the motivic context it turns out to be better
to take this to be $1-\epsilon$ in $\pi_{0,0}(S)$ (we will 
see why momentarily).

Morel shows in \cite{M2} that the relation $\epsilon\eta=\eta$ follows from
commutativity of the multiplication map $\mu\colon (\A^1-0)\times
(\A^1-0)\ra \A^1-0$.  We offer the general philosophy that properties of
the higher Cayley-Dickson algebras should give rise to relations
amongst the Hopf elements.  Teasing out such relations from the
properties of the algebras is a tricky business, though.  
In this paper we prove the following generalization of Morel's result:

\begin{thm}
\label{th:null-Hopf}
 $(1-\epsilon)\eta=\eta\nu=\nu\sigma=0$.
\end{thm}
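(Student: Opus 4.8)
The plan is to prove the three relations $(1-\epsilon)\eta=0$, $\eta\nu=0$, and $\nu\sigma=0$ by a single mechanism applied three times. Each Hopf element is the Hopf construction $H(\mu)$ of the multiplication $\mu$ on the unit scheme of a Cayley--Dickson algebra: for $\eta$ the algebra is $k$ and $\mu$ is the multiplication on $\Gm=\A^1-0$; for $\nu$ it is the Cayley--Dickson double of $k$, with $\mu$ built from the multiplication on $\A^2-0$; and for $\sigma$ it is the next algebra in the tower, with $\mu$ built from the multiplication on $\A^4-0$. An algebraic identity satisfied by the relevant $\mu$ --- commutativity for the first relation, associativity for the other two --- will produce a geometric nullhomotopy that forces the stable relation. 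Throughout I would use the elementary properties of the Hopf construction recorded in Appendix~\ref{se:Hopf}: its naturality in $\mu$, the identification $X\ast Y\simeq\Sigma(X\wedge Y)$, and the behaviour of the join--swap under this identification.

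\emph{The relation $(1-\epsilon)\eta=0$ (Morel's theorem).} Since the multiplication $\mu\colon\Gm\times\Gm\to\Gm$ is commutative, $\mu=\mu\circ\tau$ where $\tau$ swaps the two factors. Applying the Hopf construction and using naturality, $H(\mu)=H(\mu\circ\tau)$ equals $H(\mu)$ precomposed with the join--swap on $\Gm\ast\Gm\simeq S^{3,2}$. Under $\Gm\ast\Gm\simeq\Sigma(\Gm\wedge\Gm)=\Sigma S^{2,2}$ the join--swap becomes a (de)suspension of the twist $\epsilon$ on $S^{1,1}\wedge S^{1,1}$, up to a sign that has to be pinned down. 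Hence $\eta=\pm\epsilon\eta$ in $\pi_{1,1}(S)$, and matching the sign against Morel's normalization (Theorem~\ref{th:Morel}) yields $(1-\epsilon)\eta=0$. This short argument is the template for the other two relations.

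\emph{The relations $\eta\nu=0$ and $\nu\sigma=0$.} Write $B$ for the Cayley--Dickson double of $A$, so that the pairs $(A,B)$ at issue realize $(\eta_A,\eta_B)=(\eta,\nu)$ and $(\eta_A,\eta_B)=(\nu,\sigma)$; in both cases $A$ is commutative and associative, hence $B$ is associative. The plan has two steps. First, realize the product $\eta_A\cdot\eta_B$ as an explicit composite of suspensions of $H(\mu_A)$ and $H(\mu_B)$ between motivic spheres, using the appendix together with the ring structure on $\pi_{*,*}(S)$. Second, exploit that $B=A\oplus A$ and that the Cayley--Dickson formula expresses $\mu_B$ in terms only of $\mu_A$, the conjugation on $A$, and the direct-sum coordinates: after a change of variables that is legitimate precisely because $A$ is commutative and associative, the composite above can be rewritten as one that factors through the Hopf construction of a multiplication on a contractible motivic space, hence is nullhomotopic. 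This is the exact analogue of the Morel-style argument, with ``commutative'' strengthened to ``commutative and associative'' and with $(1-\epsilon)\eta$ replaced by the product $\eta_A\eta_B$; heuristically it is the homotopical shadow of the fact that the tower of $B$-projective spaces exists because $B$ is associative.

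\emph{The main obstacle.} The real work is in making the nullhomotopy honest inside a model category of motivic symmetric spectra. First, $\A^n-0$ is not literally a motivic sphere, and the Cayley--Dickson product need not carry $(\A^n-0)\times(\A^n-0)$ into $\A^n-0$ --- it fails to do so exactly when the doubled algebra $k[i]$ acquires zero divisors --- so $\mu$ must be modified (for example, composed with a deformation retraction) before the Hopf construction applies, and the modification must be compatible with the trivializing homotopy and with basepoints. Second --- and this is why the statement is the sharp $(1-\epsilon)\eta=\eta\nu=\nu\sigma=0$ rather than a mere torsion statement --- one must track the signs and $\epsilon$-twists produced by the join--swaps and the coordinate changes: a priori the construction only gives something of the form $\eta\nu=\pm\epsilon^{k}\eta\nu$, and one has to check that the homotopy really trivializes the product on the nose. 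In the motivic stable ring, where $2$ and the units $\qf{a}$ are genuinely present (Theorem~\ref{th:Morel}), this bookkeeping cannot be suppressed.
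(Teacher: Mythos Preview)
Your treatment of $(1-\epsilon)\eta=0$ is essentially the paper's argument (Lemma~\ref{lem:epsilon-eta}): commutativity of $\mu$ on $S_\C$ plus naturality of the Hopf construction gives $\eta=\epsilon\eta$.

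For $\eta\nu=0$ and $\nu\sigma=0$, however, your proposal has a genuine gap. The phrase ``the composite above can be rewritten as one that factors through the Hopf construction of a multiplication on a contractible motivic space'' is not a construction; you have not said what the contractible space is, what the change of variables is, or why associativity of $A$ produces the factorization. Nothing in the Cayley--Dickson formula for $\mu_B$ obviously forces the composite $H(\mu_A)\circ(\text{susp }H(\mu_B))$ through a contractible object, and classically (where one \emph{can} check) this is not how the relation is proved either. Your worry about zero divisors is also misplaced: the paper never multiplies on $\A^n-0$, but on the unit spheres $S(A_n)$ for the split norm form, where the multiplication is genuinely defined because the algebras are normed.

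The paper's mechanism is quite different and uses ingredients you do not mention. Associativity of $A$ is used not to rewrite $\mu_B$, but to observe (Lemma~\ref{lem:t-endo}) that for each $t\in S(A)$ the map $\theta_t(a,b)=(a,tb)$ is a \emph{ring endomorphism} of $D(A)$. This gives a pairing $\alpha\colon S(A)\times S(DA)\to S(DA)$ satisfying $\alpha(t,xy)=\alpha(t,x)\alpha(t,y)$, i.e.\ a commutative square (Diagram~(\ref{eq:endo-mult})). Precomposing both routes around this square with $\chi$ and applying the Hopf construction yields, on one side, $[H(\alpha)]\cdot[H(\mu_B)]$; on the other side one obtains $[H(\mu_B)]\cdot[H(\alpha\#\alpha)]$, where $\#$ is the \emph{melding} of pairings and $[H(\alpha\#\alpha)]$ is computed by the key technical formula Proposition~\ref{pr:H(meld)}. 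Two further nontrivial inputs are then needed: first, $[H(\alpha)]$ must be identified with the smaller Hopf element (Lemmas~\ref{lem:alpha-eta} and \ref{lem:beta-nu}), again via the melding formula; second, the melding formula produces a term involving the diagonal $[\Delta_{S(A)}]$, and one must know its value. For $\nu\sigma$ the diagonal of $S^{3,2}$ vanishes because it is a simplicial suspension; for $\eta\nu$ the diagonal of $S^{1,1}$ is $\rho$ (Theorem~\ref{th:diagonal}), and the resulting expression $2\eta+\eta^2\rho$ vanishes only by invoking Morel's relation from Theorem~\ref{th:Morel}. None of these pieces---the endomorphism action, the melding formula, or the diagonal computation---appears in your outline.
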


The three relations fit an evident pattern: the product of two
successive Hopf elements is zero.  We call this the ``null-Hopf
relation''.  Notice that the pattern of three equations 
 provides some motivation for
regarding $1-\epsilon$ as the zeroth Hopf map.
The following corollary is worth recording:

\begin{cor}
\label{cor:epsilon-nu}
$\epsilon \nu=-\nu$.
\end{cor}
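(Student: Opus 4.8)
The plan is to derive the corollary purely algebraically from the null-Hopf relation $\eta\nu = 0$ of Theorem~\ref{th:null-Hopf} together with Morel's identity $\epsilon = -1 - \rho\eta$ from Theorem~\ref{th:Morel}. Since $\epsilon\nu = -\nu$ is equivalent to $(1+\epsilon)\nu = 0$, and since the identity of Theorem~\ref{th:Morel} can be rewritten as $1 + \epsilon = -\rho\eta$, the whole point is the one-line computation
\[
(1+\epsilon)\,\nu \;=\; -\rho\eta\nu \;=\; -\rho\,(\eta\nu) \;=\; -\rho\cdot 0 \;=\; 0,
\]
where the middle equality is associativity of the product on $\pi_{*,*}(S)$ and the penultimate one is Theorem~\ref{th:null-Hopf}. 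This gives $\epsilon\nu = -\nu$.

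The only point deserving care is the scope of Morel's identity $\epsilon = -1-\rho\eta$. As quoted, Theorem~\ref{th:Morel} is a statement over a perfect field of characteristic $\neq 2$, so the computation above proves the corollary immediately in that setting. To get the conclusion over a general base (for instance $\Spec\Z$), one observes that $\epsilon$, $\rho = \rho_{-1}$, $\eta$, and $\nu$ are all defined already over $\Z$ (the unit $-1$ exists there, and the Cayley--Dickson construction producing $\nu$ is integral), so it is enough to know the relation $1+\epsilon = -\rho\eta$ over $\Spec\Z$ and then pull back along $\Spec k \to \Spec\Z$; the relation over $\Z$ in turn follows from the field case via a base-change comparison. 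I expect this bookkeeping about bases, rather than anything in the main computation, to be the only thing that needs attention, and it is minor — the genuinely hard input, $\eta\nu = 0$, has already been supplied by Theorem~\ref{th:null-Hopf}.

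I also note that the argument uses essentially nothing about $\epsilon$ beyond the fact that $1+\epsilon$ can be written in the form $x\eta$ (here with $x = -\rho$): any such factorization forces $(1+\epsilon)\nu = x(\eta\nu) = 0$ in exactly the same way. Morel's explicit formula simply hands us the cleanest possible $x$, so no separate input about the twist map is required.
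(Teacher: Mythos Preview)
Your core computation is exactly the paper's one-line proof: $\epsilon\nu = (-1-\rho\eta)\nu = -\nu$, using $\eta\nu=0$ from Theorem~\ref{th:null-Hopf}.

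One caution about your side discussion on bases: the direction of your proposed base-change argument is backwards. Base change along $\Spec k \to \Spec \Z$ lets you transport relations from $\Z$ down to $k$, not the other way; deducing a relation over $\Z$ from the field case would require injectivity of $\pi_{*,*}(S_\Z)\to\pi_{*,*}(S_k)$, which is not available in general. The paper simply invokes $\epsilon=-1-\rho\eta$ without further comment (and uses it again in the proof of Proposition~\ref{pr:power1}); this particular identity in fact has a direct geometric proof via the Hopf construction on $\Gm$ that works over any base, so the concern is moot. Your main argument is unaffected.
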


\begin{proof}
$\epsilon \nu=(-1-\rho \eta)\nu=-\nu$, since $\eta\nu=0$.
\end{proof}

As a long-term goal it would be nice to completely determine the
subalgebra of $\pi_{*,*}(S)$ generated by the motivic Hopf elements,
the elements $\rho_a$, and the image of $\pi_*(S)\ra \pi_{*,0}(S)$.  
These constitute the part of $\pi_{*,*}(S)$ that is ``easy to write
down''.  Completion of this goal seems far away, however.

There are other evident geometric sources for maps between spheres.
One class of examples are the $n$th power maps $P_n\colon (\A^1-0)\ra
(\A^1-0)$.  These give elements of $\pi_{0,0}(S)$, and we completely
identify these elements in Theorem \ref{th:diag-power} below.  
Another group of
examples are the
diagonal maps $\Delta_{p,q}\colon S^{p,q}\ra S^{p,q}\Smash S^{p,q}$.  
In classical topology these are all null-homotopic, and most of them
are null motivically as well.  There is, however, an exception when $p=q$:

\begin{thm}\mbox{}\par
\label{th:diag-power}
\begin{enumerate}[(a)]
\item For $n\geq 0$ the diagonal map $\Delta\colon S^{n,n}\ra
S^{n,n}\Smash S^{n,n}$ represents $\rho^{n}$ in $\pi_{-n,-n}(S)$.  
\item For $p>q\geq 0$ the diagonal map $\Delta\colon S^{p,q} \ra
S^{p,q}\Smash S^{p,q}$ is null homotopic.
\item For $n$ in $\Z$, the $n$th power map $P_n\colon (\A^1-0) \ra (\A^1-0)$
represents  
\[ 
\begin{cases}
\frac{n}{2}(1-\epsilon) & \text{if $n$ is even},\\
1+\frac{n-1}{2}(1-\epsilon) &\text{if $n$ is odd.}
\end{cases}
\]
\end{enumerate}
\end{thm}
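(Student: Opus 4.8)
The plan is to treat the three parts largely separately, deducing~(c) from~(a).

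Part~(b) is purely formal. For $p>q\geq 0$ the sphere $S^{p,q}\simeq S^{1,0}\Smash S^{p-1,q}$ is a simplicial suspension, and the reduced diagonal of any simplicial suspension $\Sigma Z$ is already null-homotopic before stabilizing: $\bar\Delta_{\Sigma Z}\colon \Sigma Z\to \Sigma Z\Smash\Sigma Z$ factors through a standard null map obtained by collapsing one suspension coordinate. Hence $\Delta\colon S^{p,q}\to S^{p,q}\Smash S^{p,q}$ is (stably) null.

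For part~(a), write $S^{n,n}=\Gm^{\Smash n}$. Since the reduced diagonal of a smash product $A\Smash B$ equals the composite of $\bar\Delta_A\Smash\bar\Delta_B$ with the shuffle isomorphism $(A\Smash A)\Smash(B\Smash B)\to (A\Smash B)\Smash(A\Smash B)$, iteration gives $\bar\Delta_{S^{n,n}}=\sigma\circ(\bar\Delta_{\Gm})^{\Smash n}$, where $\sigma$ is the ``perfect unshuffle'' of the $2n$ copies of $\Gm$. Each adjacent transposition of two $\Gm$-factors is by definition the element $\epsilon$, and $\sigma$ is a composite of $\binom n2$ such, so as a class in $\pi_{-n,-n}(S)$ we get $\bar\Delta_{S^{n,n}}=\epsilon^{\binom n2}\rho^{n}$ once the $n=1$ case $\bar\Delta_{\Gm}=\rho$ is known. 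This equals $\rho^{n}$, since $\epsilon\rho^{2}=\rho^{2}$ (apply the identity $\rho_a\rho_b=\epsilon\rho_b\rho_a$ from the discussion after Theorem~\ref{th:Morel} at $a=b=-1$) and $\epsilon^2=1$. Thus everything in~(a) reduces to the statement $\bar\Delta_{\Gm}=\rho$ in $\pi_{-1,-1}(S)$. I would establish this by a direct geometric argument: after one desuspension $\bar\Delta_{\Gm}$ is the element of $\pi_{-1,-1}(S)=[\S,\Gm]$ to be identified, and using an explicit model for $\Gm=\A^1-0$ and for $\Gm\Smash\Gm$ one shows it agrees with the map sending the non-basepoint to $-1$, namely $\rho=\rho_{-1}$; the value $-1$ enters through the twist $\epsilon$ of $\Gm\Smash\Gm$ inherent in the identification. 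This is the one non-formal step, and I expect it to be the main obstacle---it is exactly where the classical vanishing of the reduced diagonal of $S^1$ is replaced by $\rho$ motivically.

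For part~(c), work with the canonical stable splitting $\Sigma^\infty_+\Gm\simeq \S\vee S^{1,1}$ coming from the basepoint inclusion $S^{0}\to\Gm_+$ and the collapse $\Gm_+\to S^{0}$. For a based self-map $f$ of $\Gm$, compatibility of $f_+$ with the unit and counit forces $f_+=\id_{\S}\vee\bar f$ for a well-defined class $\bar f\in[S^{1,1},S^{1,1}]=\pi_{0,0}(S)$, and ``$P_n$ represents $x$'' means $\overline{P_n}=x$. Smashing the splitting with itself, and again using that $\mu$ and the diagonal of $\Gm$ are based, one reads off how $\mu_+$, $\Delta_+$, and $(P_m\times P_n)_+$ act on the four wedge summands of $\Sigma^\infty_+(\Gm\times\Gm)\simeq \S\vee S^{1,1}\vee S^{1,1}\vee S^{2,2}$; the only non-formal inputs are that the reduced multiplication $\bar\mu\colon S^{2,2}\to S^{1,1}$ is a desuspension of the Hopf construction on $\mu$, hence represents $\eta$ (Appendix~\ref{se:Hopf}), and that $\bar\Delta_{\Gm}=\rho$ from part~(a). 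Feeding the factorization $P_{m+n}=\mu\circ(P_m\times P_n)\circ\Delta$ through this bookkeeping produces the recursion
\[\overline{P_{m+n}}=\overline{P_m}+\overline{P_n}+\eta\rho\,\overline{P_m}\,\overline{P_n}\qquad\text{in }\pi_{0,0}(S),\]
with $\overline{P_0}=0$ and $\overline{P_1}=1$. Since $\epsilon=-1-\rho\eta$ gives $1-\epsilon=2+\rho\eta$, and $\eta\rho\,(1-\epsilon)=0$ because $\epsilon\eta=\eta$, one checks by an easy induction---upward, and downward using $\overline{P_n}+\overline{P_{-n}}+\eta\rho\,\overline{P_n}\,\overline{P_{-n}}=0$ for the negative cases---that the claimed formulas solve this recursion; in particular $\overline{P_2}=2+\rho\eta=1-\epsilon$, which is why $1-\epsilon$ behaves as the ``degree two'' element. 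This proves~(c). In short, (b) and (c) are essentially formal once (a) is in hand (with the single citation $\bar\mu=\eta$), and the genuinely geometric content---the main obstacle---is the identity $\bar\Delta_{\Gm}=\rho$.
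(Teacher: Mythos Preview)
Your overall architecture matches the paper's exactly: part~(b) is Lemma~\ref{le:diagonal1}, the reduction of part~(a) to $n=1$ via the shuffle and $\epsilon^{\binom{n}{2}}$ is the proof of Theorem~\ref{th:diagonal}, and your recursion for part~(c) specializes at $m=1$ to the paper's $[P_n]=1-\epsilon[P_{n-1}]$ in Proposition~\ref{pr:power1}, derived by the same splitting-and-bookkeeping with $\bar\mu=\eta$ and $\bar\Delta_{\Gm}=\rho$. (One small remark: the paper gets $\epsilon\rho=\rho$ for free from $\epsilon\circ\Delta_{1,1}=\Delta_{1,1}$ once the $n=1$ case is known, so you need not invoke the commutativity formula $\rho_a\rho_b=\epsilon\rho_b\rho_a$.)

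The genuine gap is the one you flag yourself: the identity $[\Delta_{1,1}]=\rho$. Your sketch here is not a proof, and the heuristic ``the value $-1$ enters through the twist $\epsilon$'' does not by itself pin down the class---it explains why $[\Delta_{1,1}]$ is $\epsilon$-invariant, but so is $0$, and so is any multiple of $\rho$. The paper's argument for this step (Proposition~\ref{pr:diag}, via Lemmas~\ref{lem:delta} and~\ref{le:delta-R}) is substantially more hands-on than your sketch suggests: one \emph{suspends} rather than desuspends, modelling $\Sigma\Delta_{1,1}$ and $\Sigma(\id\Smash\rho)$ as explicit polynomial maps $\delta,R\colon D\to\A^2-0$ from a concrete pushout model $D$ of $\Sigma\Gm$, and then exhibits a chain of four explicit straight-line $\A^1$-homotopies connecting $\delta$ to $R$. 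The orientations of the various models must be tracked carefully along the way. This is the entire geometric content of the theorem, and there does not seem to be a soft argument that bypasses it.
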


The various facts in the above theorem are useful in a variety of
circumstances, but there is a specific reason for including them in the
present paper: all three parts play a role in the proof of the
null-Hopf relation from Theorem~\ref{th:null-Hopf}.

\subsection{Next Steps}

This paper does not exhaust the possibilities of the ``hands-on" approach 
to motivic stable homotopy groups over $\Spec \Z$.
An obvious next step is to consider generalizations of the classical relation
$12\nu = \eta^3$ in $\pi_3(S)$.  

This formula as written cannot possibly hold motivically, since the left side
belongs to $\pi_{3,2}(S)$ while the right side belongs to $\pi_{3,3}(S)$.
An obvious substitution is to ask whether
$12\nu$ equals $\eta^2 \eta_{\top}$ in $\pi_{3,2}(S)$.
One might speculate that the $12\nu$
should be replaced by $6(1-\epsilon)\nu$, but these expressions are already known
to be equal in $\pi_{3,2}(S)$ by 
Corollary \ref{cor:epsilon-nu}.

Another possible extension concerns Toda brackets.  
Classically, the Toda bracket $\langle \eta, 2, \nu^2 \rangle$ in $\pi_8(S)$
contains an element called ``$\epsilon$" that is a multiplicative generator
for the stable homotopy ring.  (Beware that this bracket has indeterminacy
generated by $\eta \sigma$.)
Motivically, we can form the Toda bracket
$\langle \eta, 1-\epsilon, \nu^2 \rangle$ in $\pi_{8,5}(S)$ and obtain
a motivic generalization over $\Spec \Z$. 
(There is a notational conflict here because $\epsilon$ is used in the motivic
context for the twist map in $\pi_{0,0}(S)$.)

There is much more to say about Toda brackets in this context, but we will
leave the details for future work.

\subsection{Organization of the paper}
There is a certain amount of technical machinery needed for the paper,
and this has all been deposited into three appendices.  The body of
the paper has been written assuming knowledge of these appendices, but
the most efficient way to read the paper might be to 
first ignore them, referring back only as needed for technical
details.  Appendix A deals with stable splittings of smash
products inside of Cartesian products.  Appendix B deals with joins and
also certain issues of ``canonical isomorphisms'' in homotopy theory.
Finally, Appendix C treats the Hopf construction and related
issues; there is a key idea of ``melding'' two pairings together, and
a recondite formula for the Hopf construction of such a melding
(Proposition~\ref{pr:H(meld)}).  
This formula is perhaps the 
most important technical element in our proof of the null-Hopf relation.  

Concerning the main body of the paper, Section 2 reviews basic facts
about the motivic stable homotopy category and the ring
$\pi_{*,*}(S)$.  An important issue here is the precise
definition of what it means for a map in the stable homotopy category
to represent an element of $\pi_{*,*}(S)$, and also formulas for
dealing with the ``motivic signs'' that inevitably arise in
calculations.  

Section 3 deals with diagonal maps and power maps, and there we prove
Theorem~\ref{th:diag-power}.  Section 4 reviews the necessary material
about Cayley-Dickson algebras and defines the motivic Hopf elements
$\eta$, $\nu$, and $\sigma$.  Finally, in Section 5 we prove the
null-Hopf relation of Theorem~\ref{th:null-Hopf}. 

\subsection{Notation}
We remark that the symbols $\chi$ and $p$, when applied to maps, have
a special meaning in this paper.  Maps called $p$ are always the
projection from a Cartesian to a smash product, and maps called $\chi$
are certain stable splittings for these projections.  See
Appendix~\ref{se:stable-split} for details.

\subsection{Acknowledgements}
The first author was supported by NSF grant DMS-0905888.  The second
author was supported by NSF grant DMS-1202213.


\section{Preliminaries}

This section describes certain foundational issues and conventions regarding
the motivic stable homotopy category and the motivic stable homotopy ring
$\pi_{*,*}(S)$.  

\subsection{Basic setup}
Fix a commutative ring $k$ (in practice this will usually be $\Z$ or a
field).
Let $\Sm/k$ denote the category of smooth schemes
over $\Spec k$.  The category of  {\it motivic spaces\/} is the category of
simplicial presheaves $\sPre(\Sm/k)$.  This category carries various
Quillen-equivalent model structures that represent unstable
$\A^1$-homotopy theory, but for the purposes of this paper we will
mostly use the injective model structure developed in \cite{MV}.
It is very convenient that all objects are cofibrant in this
structure.  We will usually shorten ``motivic spaces'' to just
``spaces'' for the rest of the paper.  

Most of the paper actually restricts to the setting of pointed motivic
spaces.  This is the associated  model category $\sPre(\Sm/k)_*=(*\!\ovcat\!
\sPre(\Sm/k))$ of motivic spaces under $*$.

As explained in \cite{J}, one can stabilize the category of pointed motivic
spaces to form a model catgory of motivic symmetric spectra.  We write
$\Motspectra$ for this category.  Our aim in this paper is to work in
the homotopy category $\Ho(\Motspectra)$ as much as possible, and this
is where all of our theorems take place.  As is usual in homotopy
theory, however, a certain amount of work necessarily has to take
place at the model category level.

It is useful to be able to compare the motivic homotopy category to
the classical homotopy category of topological spaces, and there are a
couple of ways to do this.  The ``constant presheaf'' functor is the
left adjoint in a Quillen pair $\sSet \ra 
\sPre(\Sm/k)$ (when we write Quillen pairs we draw an arrow in the
direction of the left adjoint).  This stabilizes to a similar Quillen pair between
symmetric spectra categories
\[ \Spectra \llra{c} \Motspectra.\]

Alternatively, if the base ring $k$ is embedded in $\C$ then we can
`realize' our motivic spaces as ordinary topological spaces, and
likewise realize motivic spectra as ordinary spectra.  Unfortunately
this doesn't work well at the model category level if we use the
injective model structure, as we do not get Quillen pairs.  For these
comparison purposes it is more convenient to use the flasque model
structure of \cite{I}.  We will not need the details in the present
paper, only the fact that this can be done; we occasionally refer to
topological realization in a passing comment.

\subsection{Spheres and the ring \mdfn{$\pi_{*,*}(S)$}}
We begin with the two objects $S^{1,0}$ and $S^{1,1}$ in
$\Ho(\Motspectra)$.   Here $S^{1,0}=\Sigma^\infty S^1$, 
where $S^1$ is the ``simplicial circle", i.e.,
the constant presheaf with value $S^1$.
Likewise, $S^{1,1}$ is the suspension spectrum of 
the representable presheaf $(\A^1-0)$,
which has basepoint given by the rational point
$1$ in $(\A^1-0)$.

Let us fix motivic spectra 
 $S^{-1,0}$ and $S^{-1,-1}$
together with isomorphisms (in the homotopy category) 
$a_1\colon S^{-1,0}\Smash S^{1,0}\ra S^{0,0}$ and
$a_2\colon S^{-1,-1}\Smash S^{1,1}\ra S^{0,0}$.  There is some choice
involved in these isomorphisms, as they can be varied by an arbitrary
self-homotopy equivalence of the spectrum $S^{0,0}$.  For $a_1$ it is
convenient to fix the corresponding isomorphism $a\colon S^{-1}\Smash S^1\ra
S^0$ in $\Spectra$ and then let $a_1$ be the image of $a$ under the derived
functor of
$c$.  For $a_2$ it is perhaps best to fix a choice once and for all
over $\Spec \Z$, and 
to insist that the 
topological realization of $a_2$ is $a$; this is not strictly
necessary, however.

For each integer $n$, define 
\[ 
S^{n,0}=\begin{cases} 
(S^{1,0})^{\Smash(n)} & \text{if $n\geq 0$,}\\
(S^{-1,0})^{\Smash(-n)} & \text{if $n<0$,}
\end{cases}
\]
\[
S^{n,n}=\begin{cases} 
(S^{1,1})^{\Smash(n)} & \text{if $n\geq 0$,}\\
(S^{-1,-1})^{\Smash(-n)} & \text{if $n<0$.}
\end{cases}
\]
Finally, for integers $p$ and $q$, define
\[ S^{p,q}=(S^{1,0})^{\Smash(p-q)}\Smash (S^{1,1})^{\Smash(q)}.
\]

We will need the following important result from \cite{D}: for any
$(p_1,q_1),\ldots,(p_n,q_n)$ in $\Z^2$ and
$(p_1',q_1'),\ldots,(p_k',q_k')$ in $\Z^2$ 
such that $\sum_i p_i=\sum_i p'_i$ and $\sum_i q_i=\sum_i q'_i$,
there is a uniquely--distinguished  ``canonical isomorphism''
\[ \phi\colon S^{p_1,q_1}\Smash \cdots \Smash S^{p_n,q_n} \ra
S^{p'_1,q'_1}\Smash \cdots \Smash S^{p'_k,q'_k}
\]
in the homotopy category of motivic spectra.
These canonical isomorphisms have the properties that:
\begin{itemize}
\item If $\phi$ and $\phi'$ are canonical then so are
$\phi\Smash\phi'$ and $\phi^{-1}$;
\item Identity maps are all canonical, as are the maps $a_1$ and $a_2$;
\item The unit maps $S^{p,q}\Smash S^{0,0}\iso S^{p,q}$ and
$S^{0,0}\Smash S^{p,q}\iso S^{p,q}$ are canonical;
\item Any composition of canonical maps is canonical.
\end{itemize}
See \cite[Remark 1.9]{D} for a complete discussion.
We will always denote these canonical morphisms by the symbol $\phi$.
(Note: In this paper we systematically suppress all associativity
isomorphisms; but if we were not suppressing them, they would also be
canonical).

Define $\pi_{p,q}(S)=[S^{p,q},S^{0,0}]$ and write $\pi_{*,*}(S)$ for
$\oplus_{p,q} \pi_{p,q}(S)$.  If $f\in \pi_{a,b}(S)$ and $g\in
\pi_{c,d}(S)$ define $f\cdot g$ to be the composite
\[ S^{a+c,b+d}\llra{\phi} S^{a,b}\Smash S^{c,d} \llra{f\Smash g}
S^{0,0}\Smash S^{0,0} \iso S^{0,0}.
\]  
By \cite[Proposition 6.1(a)]{D} this product makes $\pi_{*,*}(S)$ into an associative
and unital ring, where the subring $\pi_{0,0}(S)$ is central.  

\subsection{Representing elements of \mdfn{$\pi_{*,*}(S)$}}
\label{se:signs}

Let $f\colon S^{a,b}\ra S^{p,q}$. 
We write $|f|$ for $(a-p,b-q)$, i.e., the bidegree of the motivic 
stable homotopy element that $f$ will represent.
There are two ways to obtain an
element of $\pi_{a-p,b-q}(S)$ from $f$, which we will denote $[f]_l$ and
$[f]_r$.  Let $[f]_l$ be the composite
\[ S^{a-p,b-q}\llra{\phi} S^{a,b}\Smash S^{-p,-q}\llra{f\Smash \id}
S^{p,q}\Smash S^{-p,-q}\llra{\phi} S^{0,0}\]
and let $[f]_r$ be the composite
\[
 S^{a-p,b-q}\llra{\phi} S^{-p,-q}\Smash S^{a,b}\llra{\id\Smash f}
S^{-p,-q}\Smash S^{p,q}\llra{\phi} S^{0,0}.
\]
It is proven in \cite[Section 6.2]{D} that $[gf]_r=[g]_r\cdot [f]_r$, whereas
$[gf]_l=[f]_l\cdot [g]_l$.  In this paper we will never use $[f]_l$,
and so we will just write $[f]=[f]_r$.  

For each $a,b,p,q\in \Z$ let $t_{(a,b),(p,q)}$ denote the composition
\[ S^{a+p,b+q}\llra{\phi} S^{a,b}\Smash S^{p,q} \llra{t} S^{p,q}\Smash
S^{a,b} \llra{\phi} S^{a+p,b+q}
\]
where $t$ is the twist isomorphism for the smash product.  Write
$\tau_{(a,b),(p,q)}=[t_{(a,b),(p,q)}]\in \pi_{0,0}(S)$.  It is easy
to see that  $\tau_{1,0}=-1$, as this formula holds in 
$\Ho(\Spectra)$ and one just pushes it into $\Ho(\Motspectra)$ via
the functor $c$.  
Let 
$\epsilon=\tau_{1,1}$.  The following formula is then a special case of
 \cite[Proposition 6.6]{D}:
\begin{myequation}
\label{eq:tau}
\tau_{(a,b),(p,q)}=(-1)^{(a-b)\cdot(p-q)}\cdot \epsilon^{b\cdot q}.
\end{myequation}
Note that $\tau\colon \Z^2\times \Z^2 \ra \pi_{0,0}(S)^\times$ is bilinear.

These elements $\tau_{(a,b),(p,q)}$ arise in various formulas related to
commutativity of the smash product.  For example,
the following is from \cite[Proposition 1.18]{D}:

\begin{prop}[Graded-commutativity]
\label{pr:commute}
Let $f\in \pi_{a,b}(S)$ and $g\in \pi_{c,d}(S)$.  Then
\[ fg=gf\cdot \tau_{(a,b),(c,d)}=gf\cdot (-1)^{(a-b)(c-d)}\cdot \epsilon^{bd}.
\]
\end{prop}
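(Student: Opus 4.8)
The plan is to prove the identity first at the level of honest maps in $[S^{a+c,b+d},S^{0,0}]$, in the form
\[
 fg = gf\circ t_{(a,b),(c,d)},
\]
and then to transfer it into the ring $\pi_{*,*}(S)$ using the multiplicativity $[vw]_r=[v]_r\cdot[w]_r$ recorded above, together with equation \eqref{eq:tau} for the closing equality.

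For the map-level identity, first unwind the two products. Writing $\alpha\colon S^{a+c,b+d}\ra S^{a,b}\Smash S^{c,d}$ and $\beta\colon S^{a+c,b+d}\ra S^{c,d}\Smash S^{a,b}$ for the relevant canonical isomorphisms, $fg$ is the composite $\phi\circ(f\Smash g)\circ\alpha$ and $gf$ is $\phi\circ(g\Smash f)\circ\beta$, where each outer $\phi$ is the canonical unit isomorphism $S^{0,0}\Smash S^{0,0}\iso S^{0,0}$. Now feed in the symmetry $t\colon S^{a,b}\Smash S^{c,d}\ra S^{c,d}\Smash S^{a,b}$: naturality of the symmetry isomorphism gives $(g\Smash f)\circ t = t'\circ(f\Smash g)$, where $t'$ is the symmetry of $S^{0,0}\Smash S^{0,0}$. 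Two bookkeeping facts then complete the identification. First, $\beta^{-1}\circ t\circ\alpha$ is exactly $\phi\circ t\circ\phi$ in the notation defining $t_{(a,b),(c,d)}$, and equals $t_{(a,b),(c,d)}$ by uniqueness of canonical isomorphisms (here $\beta^{-1}$ is canonical because $\beta$ is, and inverses and composites of canonical maps are canonical). Second, by the coherence of a symmetric monoidal category at the unit object, composing $t'$ with the unit isomorphism $S^{0,0}\Smash S^{0,0}\iso S^{0,0}$ gives back that unit isomorphism. Chaining these: $gf\circ t_{(a,b),(c,d)} = \phi\circ(g\Smash f)\circ t\circ\alpha = \phi\circ t'\circ(f\Smash g)\circ\alpha = \phi\circ(f\Smash g)\circ\alpha = fg$.

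To finish, note that for any $h\colon S^{p,q}\ra S^{0,0}$ one has $[h]_r=h$ in $\pi_{p,q}(S)$, once more by naturality of the unit isomorphism. Applying $[vw]_r=[v]_r\cdot[w]_r$ with $v=gf$ and $w=t_{(a,b),(c,d)}$ (a self-map of $S^{a+c,b+d}$ of bidegree $(0,0)$) yields $gf\circ t_{(a,b),(c,d)}=(gf)\cdot\tau_{(a,b),(c,d)}$ in $\pi_{a+c,b+d}(S)$; combined with the map-level identity this is $fg=(gf)\cdot\tau_{(a,b),(c,d)}$, and \eqref{eq:tau} rewrites $\tau_{(a,b),(c,d)}$ as $(-1)^{(a-b)(c-d)}\epsilon^{bd}$. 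The one step that demands genuine care is the middle paragraph: every "this map is canonical" assertion must be justified from the closure properties listed after the canonical-isomorphism theorem, and the unit-object coherence must be invoked correctly. This is precisely the kind of bookkeeping the formalism of \cite{D} is designed to absorb, but it is also where a sign or an associativity slip is easiest to introduce.
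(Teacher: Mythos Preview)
Your argument is correct. The paper does not actually prove this proposition; it simply cites \cite[Proposition 1.18]{D}. You have supplied the direct argument, which is the expected one: unwind the definitions of $fg$ and $gf$, use naturality of the symmetry $t$ together with the unit-object coherence to obtain the map-level identity $fg = (gf)\circ t_{(a,b),(c,d)}$, then convert the composite on the right into the ring product via $[vw]_r=[v]_r[w]_r$ and the observation that $[h]_r=h$ whenever the target of $h$ is $S^{0,0}$.

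Two small comments on presentation. First, when you justify $\beta^{-1}\circ t\circ\alpha = t_{(a,b),(c,d)}$ ``by uniqueness of canonical isomorphisms,'' be explicit that you are applying uniqueness separately to the two canonical legs $\alpha$ and $\beta^{-1}$, not to the full composite (the twist $t$ itself is \emph{not} canonical---indeed if it were, $\tau$ would be identically $1$). Your parenthetical does make this clear, but the sentence structure could mislead a reader. Second, the unit-object coherence you invoke---that the symmetry $t'\colon S^{0,0}\Smash S^{0,0}\to S^{0,0}\Smash S^{0,0}$ becomes the identity after the unit isomorphism---is a standard consequence of the symmetric monoidal axioms (from $\lambda_I=\rho_I$ together with $\lambda = \rho\circ t$), and it is fine to cite it as you do.
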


\begin{remark}
\label{re:invcoh}
If $f\colon S^{a,b}\ra S^{p,q}$ then we may consider the two maps $f\Smash
\id_{r,s}$ and $\id_{r,s}\Smash f$.  All three of these maps represent
elements in $\pi_{*,*}(S)$, but not necessarily the same ones.  
The following two facts are proven in
\cite[Proposition 6.11]{D}:
\begin{enumerate}[(i)]
\item $[\id_{r,s}\Smash f]=[f]$
\item $[f\Smash
\id_{r,s}]=\tau_{|f|,(r,s)}\cdot [f]=\tau_{(a-p,b-q),(r,s)}[f]=(-1)^{(a-p-b+q)\cdot(r-s)}
\epsilon^{(b-q)s}[f]$.  
\end{enumerate}
A useful  special case says  that if $f\colon S^{a,b}\ra S^{a,b}$ then
$[f]=[\id_{r,s}\Smash f]=[f\Smash \id_{r,s}]$.  

If $g\colon S^{r,s}\ra S^{t,u}$ then combining (i) and (ii) we obtain
\begin{enumerate}[(i)]
\addtocounter{enumi}{2}
\item $[f\Smash g]=[(f\Smash \id_{t,u})\circ (\id_{a,b}\Smash g)]=[f\Smash
\id_{t,u}]\cdot [\id_{a,b}\Smash g]=[f]\cdot [g] \cdot \tau_{|f|,(t,u)}$.
\end{enumerate}
\end{remark}

\subsection{Homotopy spheres}

We will often study maps $f\colon X\ra Y$ where $X$ and
$Y$ are homotopy equivalent to motivic spheres but not actual spheres
themselves.  In this case one can obtain a corresponding element $[f]$
of $\pi_{*,*}(S)$, but only after making specific choices of
orientations for $X$ and $Y$.

To make this precise, let us say that a \dfn{homotopy sphere} is a
motivic spectrum $X$ that is isomorphic to some sphere $S^{p,q}$ in
the motivic stable homotopy category.  An 
\dfn{oriented homotopy sphere} is a motivic spectrum
$X$ together with a {\it specified\/} isomorphism $X \map S^{p,q}$ in
the motivic stable homotopy category.

A given homotopy sphere has many orientations.  The set of
orientations is in bijective correspondence with the set of
multiplicative units inside $\pi_{0,0}(S)$.  We call this set of units
the \dfn{motivic orientation group}, which depends on the
base scheme in general.  Note
that the analog in classical topology is the group $\Z/2=\{-1,1\}$.
By Morel's Theorem, 
over perfect fields whose characteristic is not $2$,
the motivic orientation group is the group of units
in the Grothendieck-Witt ring $GW(k)$.  A formulaic description of
this group seems to be unknown, but we do not actually need to know
anything specific about it for the content of this paper.
Nevertheless, understanding this group is a curious problem  and so we
do offer the remark below:

\begin{remark}[Motivic orientations]
Recall that $\GW(k)$ is obtained by quotienting the free algebra on
symbols $\lab{a}$ for $a\in k^\times$ by the relations
\begin{enumerate}
\item
$\lab{a}\lab{b}=\lab{ab}$.
\item 
$\lab{a^2}=1$.
\item
$\lab{a}+\lab{b}=\lab{a+b}+\lab{ab(a+b)}$.
\end{enumerate}
The elements $\lab{a}$ are
clearly units in $\GW(k)$, and so one obtains a group map $\Z/2 \times
\bigl [ k^\times/(k^\times)^2\bigr ]
\ra \GW(k)^\times$ by sending the generator of $\Z/2$ to $-1$
and the element $[a]$ of $k^\times/(k^\times)^2$ to $\lab{a}$.

The map $\Z/2\times \bigl [k^\times/(k^\times)^2\bigr ]\ra \GW(k)^\times$ is an isomorphism in some
cases like $k=\R$ and $k=\Q_p$ for $p\equiv 1$ mod $4$, but not in
other cases like $k=\Q_p$ for
$p\equiv 3$ mod $4$. 
\end{remark}

\begin{remark}[Suspension data]
\label{rem:susp-data}
Here is a fundamental difficulty that occurs even in classical
homotopy theory:
given two objects $A$ and $B$ that are models for the suspension of an
object $X$, there is no canonical isomorphism between $A$ and $B$ in
the homotopy category.  In some sense, the problem boils down to the
fact that if we are just handed a model of $\Sigma X$ then we are
likely to see two cones on $X$ glued together, but we do not know
which is the ``top'' cone and which is the ``bottom''.  Mixing the
roles of the two cones tends to alter maps by a factor of $-1$.  So
when talking about models for $\Sigma X$ it is important to have the
two cones distinguished.  We define \dfn{suspension data} for $X$ to
be a diagram $[C_+X \lla X \lra C_-X]$ where both maps are
cofibrations and both $C_+X$ and $C_-X$ are contractible.  We call
$C_+X$ the {\it top cone\/} and $C_-X$ the {\it bottom cone\/}.
Choices of suspension data will appear throughout the paper, starting
in Remark \ref{re:orient-const}(2) below.  See
Appendix~\ref{se:canon} for more discussion of this and related issues.  
\end{remark}

\begin{remark}[Induced orientations on constructions]
\label{re:orient-const}
If $X$ and $Y$ are homotopy spheres then constructions like suspension,
smash product, and the join (see Appendix~\ref{se:joins}) yield
 other homotopy
spheres.  If $X$ and $Y$ are oriented then these constructions
inherit orientations in a specified way:
\begin{enumerate}[(1)]
\item
If $X$ and $Y$ have orientations $X \map S^{p,q}$ and
$Y \map S^{a,b}$, then $X \Smash Y$ has an induced orientation
\[
X \Smash Y \lra S^{p,q} \Smash S^{a,b} \llra{\phi} S^{p+a,q+b},
\]
where the second map is the canoncal isomorphism.
\item
If $X$ has an orientation $X \map S^{p,q}$ and
$C_+X \lla X \lra C_-X$ constitutes suspension data for $X$,
then $C_+X \amalg_X C_-X$ has an induced orientation
\[
C_+X \amalg_{X} C_-X \lra S^{1,0} \Smash X \lra S^{1,0}\Smash S^{p,q}
\llra{\phi} S^{p+1,q},
\]
where the first map is the canonical isomorphism 
in the homotopy category (see Section~\ref{se:canon}).
\item
Suppose that $X \map S^{p,q}$ and $Y \map S^{a,b}$ are orientations.  
Then the join $X*Y$ (see Appendix~\ref{se:joins})
has an induced orientation
\[
X*Y \lra S^{1,0} \Smash X \Smash Y \llra{\he} S^{1,0}\Smash S^{p,q}\Smash
S^{a,b}
\llra{\phi} S^{p+a+1,q+b},
\]
where the first map is the canonical isomorphism in the homotopy
category from Lemma \ref{lem:join-we}.
\end{enumerate}
\end{remark}

A map $f\colon X\ra Y$ between homotopy spheres does not by itself yield an
element of $\pi_{*,*}(S)$.  But once $X$ and $Y$ are oriented we
obtain the composite
\[ 
\xymatrix{
S^{a,b} \ar[r]^\iso & X  \ar[r] & Y \ar[r]^-\cong & S^{p,q}. }
 \]
If $\tilde{f}$ denotes this composite, then $[\tilde{f}]$ gives a corresponding
element of $\pi_{a-p,b-q}(S)$.  In the future we will just denote this
element by $[f]$, by abuse of notation.

There is an important case in which one does not have to worry about
orientations:

\begin{lemma}
Let $X$ be a homotopy sphere and $f\colon X \map X$ be a self-map.
Then $f$ represents a well-defined element of $\pi_{0,0}(S)$ that is
independent of the choice of orientation on $X$.
\end{lemma}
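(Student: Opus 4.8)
The plan is to reduce the statement to a calculation with the twist element $\epsilon$, using the fact proved in Remark~\ref{re:invcoh} that conjugation-type ambiguities are controlled by the elements $\tau_{|f|,(r,s)}$. First I would fix an orientation $\theta\colon X\map S^{p,q}$, which lets us form $\tilde{f}_\theta\colon S^{p,q}\map X\llra{f}X\map S^{p,q}$, obtained by composing $\theta^{-1}$, $f$, and $\theta$. Since $f$ is a self-map, the source and target of $\tilde{f}_\theta$ are literally the same sphere $S^{p,q}$, so $[\tilde{f}_\theta]\in\pi_{0,0}(S)$ is defined. The goal is to show that if $\theta'=u\cdot\theta$ for a different orientation (where $u$ ranges over the units of $\pi_{0,0}(S)$, the motivic orientation group) then $[\tilde{f}_{\theta'}]=[\tilde{f}_\theta]$.

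The key observation is that changing the orientation by a unit $u$ changes $\tilde{f}_\theta$ by pre- and post-composition: roughly, $\tilde{f}_{\theta'}$ is obtained from $\tilde{f}_\theta$ by conjugating with a self-equivalence of $S^{p,q}$ representing $u$. More precisely, a change of orientation by $u$ can be realized by a self-map $\gamma_u\colon S^{p,q}\map S^{p,q}$ with $[\gamma_u]=u$, and then $\tilde{f}_{\theta'}=\gamma_u^{-1}\circ\tilde{f}_\theta\circ\gamma_u$ in the homotopy category (up to canonical isomorphisms, which are suppressed). Now I apply the composition formula $[g h]_r=[g]_r[h]_r$ from Section~\ref{se:signs}: since all three maps are self-maps of the \emph{same} sphere $S^{p,q}$, their bracket-classes multiply, giving $[\tilde{f}_{\theta'}]=[\gamma_u^{-1}]\cdot[\tilde{f}_\theta]\cdot[\gamma_u]=u^{-1}\cdot[\tilde{f}_\theta]\cdot u$. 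Because $[\tilde{f}_\theta]$ lies in $\pi_{0,0}(S)$ and the subring $\pi_{0,0}(S)$ is \emph{central} in $\pi_{*,*}(S)$ (as recorded after the definition of the product), this conjugation is trivial: $u^{-1}\cdot[\tilde{f}_\theta]\cdot u=[\tilde{f}_\theta]$. Hence the element is independent of the orientation, and it obviously lies in $\pi_{0,0}(S)$ since $|f|=(0,0)$.

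The main obstacle I anticipate is the bookkeeping needed to justify the identity $\tilde{f}_{\theta'}=\gamma_u^{-1}\circ\tilde{f}_\theta\circ\gamma_u$ rigorously — that is, verifying that changing the specified isomorphism $X\map S^{p,q}$ by multiplying with a self-equivalence really does conjugate $\tilde f$ in the way claimed, once all the canonical isomorphisms $\phi$ that are being suppressed are written out. This is where one has to be careful that the relevant canonical isomorphisms cancel in pairs. One clean way to avoid this entirely is to argue more directly: for two orientations $\theta,\theta'$, the composite $\theta'\circ\theta^{-1}\colon S^{p,q}\map S^{p,q}$ is a self-equivalence, so $[\theta'\circ\theta^{-1}]=u$ for some unit $u$, and then $\tilde f_{\theta'}=(\theta'\circ\theta^{-1})\circ\tilde f_{\theta}\circ(\theta'\circ\theta^{-1})^{-1}$; applying $[gh]_r=[g]_r[h]_r$ and centrality of $\pi_{0,0}(S)$ finishes it. Everything else is routine, relying only on the composition formula and the centrality statement, both already available in the excerpt.
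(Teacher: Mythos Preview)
Your proposal is correct and essentially identical to the paper's proof: both arguments observe that changing the orientation conjugates the representative self-map of $S^{p,q}$ by the unit $\theta'\circ\theta^{-1}\in\pi_{0,0}(S)$, and then invoke commutativity of $\pi_{0,0}(S)$ to conclude that conjugation is trivial. The opening reference to $\tau$-elements and Remark~\ref{re:invcoh} is unnecessary (since $|f|=(0,0)$ all such factors are $1$), and the minor swap of $\gamma_u$ and $\gamma_u^{-1}$ in your first formulation is harmless for the same reason.
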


\begin{proof}
Choose any two orientations
$g\colon X \lra S^{p,q}$ and $h\colon X \lra S^{p,q}$.
The diagram
\[
\xymatrix{
S^{p,q} \ar[r]^{g^{-1}} \ar[d]_{hg^{-1}} & X \ar[r]^f & X \ar[r]^g &
  S^{p,q} \ar[d]^{hg^{-1}} \\
S^{p,q} \ar[r]^{h^{-1}} & X \ar[r]^f & X \ar[r]^h & S^{p,q}
}
\]
commutes in the stable homotopy category.
This shows that the elements of $\pi_{0,0}(S)$ represented 
by the top and bottom rows are related by conjugation by the
element $hg^{-1}$ of $\pi_{0,0}(S)$.  But 
the ring $\pi_{0,0}(S)$ is commutative, so conjugation
by $hg^{-1}$ acts as the identity.
\end{proof}

\begin{ex}
\label{ex:orient}
The following examples specify standard orientations for the
models of spheres that we commonly encounter.

\begin{enumerate}[(1)]
\item
$\P^1$ based at $[1:1]$ or $[0:1]$.

By the {\it standard affine covering diagram\/} of $\P^1$ we mean $U_1 \la
U_1\cap U_2 \ra U_2$ where $U_1$ (resp., $U_2$) is the open subscheme of points
$[x:y]$ such that $x\neq 0$ (resp., $y\neq 0$).  There is an evident
isomorphism of diagrams
\[ \xymatrix{U_1\ar[d]_\iso & U_1\cap U_2\ar[r]\ar[l]\ar[d]_\iso & U_2\ar[d]^\iso \\
\A^1 & (\A^1-0)\ar[l]_-{i}\ar[r]^-{\inv} & \A^1
}
\]
where $i$ is the inclusion and $\inv$  sends a point $x$ to $x^{-1}$.  (For
example, $U_1\ra \A^1$ sends $[x:y]$ to $\frac{y}{x}$).  
The bottom row of the diagram is suspension data
for $(\A^1-0)$.  As in Remark~\ref{re:orient-const},
this gives an orientation to $\A^1 \amalg_{(\A^1-0)} \A^1$.
The canonical map $\A^1 \amalg_{(\A^1-0)} \A^1 \ra \P^1$ is a weak
equivalence, which gives an orientation on $\P^1$ as well.

\item $(\A^n-0)$ based at $(1,1,\ldots,1)$ or at $(1,0,\ldots,0)$.

We orient $(\A^n-0)$ as the join of $(\A^1-0)$ and $(\A^{n-1}-0)$.  That is
to say, 
$\bigl [ (\A^1-0) \times \A^{n-1}\bigr ] \amalg_{(\A^1-0) \times (\A^{n-1}-0)} 
   \bigl [\A^1 \times (\A^{n-1}-0)\bigr ]$ has an orientation using
Remark~\ref{re:orient-const} and induction.
The canonical map
\[
\bigl [[(\A^1-0) \times \A^{n-1}\bigr ] \amalg_{(\A^1-0) \times (\A^{n-1}-0)} 
   \bigl [\A^1 \times (\A^{n-1}-0)\bigr ] \map (\A^n-0)
\]
is a weak equivalence,
which gives an orientation on $(\A^n-0)$.  

\item The split unit sphere $S_{2n-1}$ based at $(1,0,\ldots,0)$.

Let $\A^{2n}$ have coordinates $x_1,y_1,\ldots,x_n,y_n$ and let
$S_{2n-1}\inc \A^{2n}$ be the closed subvariety defined by
$x_1y_1+\cdots+x_ny_n=1$.  The quadratic form on the left of this
equation is called the {\it split quadratic form\/}, and $S_{2n-1}$ is called
the unit sphere with respect to this split form.  Let $\pi\colon S \ra
(\A^n-0)$ be the map $(x_1,y_1,\ldots,x_n,y_n)\mapsto
(x_1,x_2,\ldots,x_n)$.    This is a Zariski-trivial bundle with fibers
$\A^{n-1}$, and so $\pi$ is a weak equivalence (this follows
from a standard argument, for example using the techniques
of \cite[3.6--3.9]{DI1}).  The standard
orientation on $(\A^n-0)$ therefore induces an orientation on $S$ via
$\pi$.  

Note that there are other weak equivalences $S\ra (\A^n-0$), such as
$(x_1,y_1,\ldots,x_n,y_n)\mapsto (x_1,x_2,\ldots,x_{n-1},y_n)$.  These
maps can induce different orientations on $S$.
\end{enumerate}
\end{ex}


\section{Diagonal maps and power maps}

Le $X$ be an unstable, oriented, homotopy sphere 
that is equivalent to $S^{p,q}$ for some $p \geq q \geq 0$.
The diagonal $\Delta_X \colon X \ra X \Smash X$
represents an element in $\pi_{-p,-q}(S)$.  When $X=S^{p,q}$ we write
$\Delta_{p,q}=\Delta_{S^{p,q}}$.  Our goal in this section
is the following result.

\begin{thm} 
\label{th:diagonal}
Let $p \geq q \geq 0$.  The element $[\Delta_{p,q}]$ in $\pi_{-p,-q}(S)$ is zero 
if $p>q$, and it is $\rho^q$ if $p = q$.
\end{thm}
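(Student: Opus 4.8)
The plan is to reduce everything to the two cases $(p,q)=(1,0)$ and $(p,q)=(1,1)$ by a multiplicativity argument, and then to handle those two base cases by hand. The key observation is that for oriented homotopy spheres $X$ and $Y$, the diagonal $\Delta_{X\Smash Y}$ factors (up to the twist that permutes the two inner smash factors) through $\Delta_X \Smash \Delta_Y$; after pushing everything into the stable homotopy category and keeping careful track of the orientations induced on $X\Smash Y$ via Remark~\ref{re:orient-const}(1), this should yield a formula of the shape $[\Delta_{p+a,q+b}] = [\Delta_{p,q}]\cdot[\Delta_{a,b}]\cdot(\text{sign})$, where the sign comes from the twist map and is computed by the formula~\eqref{eq:tau} for $\tau_{(p,q),(a,b)}$. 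Since $S^{p,q}$ is by definition a smash power of copies of $S^{1,0}$ and $S^{1,1}$, and since we may reorder smash factors (again at the cost of signs accounted for by $\tau$), this expresses $[\Delta_{p,q}]$ as a product of $p-q$ copies of $[\Delta_{1,0}]$ and $q$ copies of $[\Delta_{1,1}]$, decorated by a product of $\tau$-factors.

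Next I would treat the base cases. For $S^{1,0}=\Sigma^\infty S^1$, the diagonal is the image under the constant-presheaf functor $c$ of the classical diagonal $S^1\ra S^1\Smash S^1$, which is stably null-homotopic; hence $[\Delta_{1,0}]=0$. For $S^{1,1}=\Sigma^\infty(\A^1-0)$, the diagonal $(\A^1-0)\ra (\A^1-0)\Smash(\A^1-0)$ is \emph{not} null: this is exactly the content of the case that must be computed geometrically. Here I would use the oriented model from Example~\ref{ex:orient}, i.e. $(\A^1-0)$ with the suspension data coming from the standard affine covering of $\P^1$, and identify the map $(\A^1-0)\ra (\A^1-0)\wedge(\A^1-0)$, $x\mapsto (x,x)$, stably. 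The natural candidate for its class is $\rho=\rho_{-1}\in\pi_{-1,-1}(S)$; one expects this because the diagonal of $\Gm$ is built from the ``squaring-type'' data that Morel's relations tie to $\rho$ and $\eta$. Concretely I would compare the diagonal with a Mayer--Vietoris / cofiber-sequence description of $(\A^1-0)\wedge(\A^1-0)$ coming from the product covering, and read off the attaching data; the point $-1$ enters because the two cones of the suspension data for $\Gm$ are glued via the inversion map $x\mapsto x^{-1}$, whose restriction to the basepoint-neighborhood contributes the element $\rho_{-1}$.

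Finally I would assemble the pieces. When $p>q$ there is at least one factor of $[\Delta_{1,0}]=0$ in the product, so $[\Delta_{p,q}]=0$ regardless of the accompanying signs, giving the first assertion. When $p=q$ the product is $[\Delta_{1,1}]^q=\rho^q$ times a product of $\tau$-factors; I would check that all these $\tau$-factors are $1$ (the relevant bidegree differences and the structure of~\eqref{eq:tau} should force the exponents of $-1$ and of $\epsilon$ to vanish, using that the bidegrees involved are of the form $(m,m)$), so that $[\Delta_{n,n}]=\rho^n$ exactly. I expect the main obstacle to be the base case $[\Delta_{1,1}]=\rho$: pinning down the correct geometric model and the induced orientation so that the sign works out to give precisely $\rho_{-1}$ (and not, say, $-\rho_{-1}$ or $\epsilon\rho_{-1}$) is where the real care is needed, and it is presumably where the appendices on canonical isomorphisms and suspension data get used.
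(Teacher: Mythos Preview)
Your overall strategy matches the paper's: factor $\Delta_{X\Smash Y}$ through $\Delta_X\Smash\Delta_Y$ via a twist, reduce to the base cases $S^{1,0}$ and $S^{1,1}$, and note that the $S^{1,0}$ case kills everything when $p>q$. The paper packages the $p>q$ case slightly differently (Lemma~\ref{le:diagonal1}: if $X$ is a simplicial suspension then $\Delta_X$ factors through $\Delta_{1,0}$ and is null), but this is exactly your ``at least one factor is $[\Delta_{1,0}]=0$'' argument.

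There is a small but genuine error in your sign bookkeeping. For bidegrees of the form $(m,m)$ and $(n,n)$, formula~\eqref{eq:tau} gives $\tau_{(m,m),(n,n)}=(-1)^{0}\epsilon^{mn}=\epsilon^{mn}$, so the exponent of $\epsilon$ does \emph{not} vanish. In the paper's version of the multiplicativity diagram one obtains $[\Delta_{1,1}]^q=\epsilon^{\binom{q}{2}}[\Delta_{q,q}]$, and the $\epsilon$'s are absorbed only after invoking the relation $\epsilon\rho=\rho$ (Corollary~\ref{cor:epsilon-rho}). That relation in turn has a clean geometric proof once you know $[\Delta_{1,1}]=\rho$: the twist on $S^{1,1}\Smash S^{1,1}$ literally fixes the diagonal, so $\epsilon\cdot[\Delta_{1,1}]=[\Delta_{1,1}]$.

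The real gap is the base case $[\Delta_{1,1}]=\rho$, which you correctly flag as the hard step but do not actually prove. Your heuristic about the inversion map in the suspension data for $\Gm$ ``contributing $\rho_{-1}$'' is not an argument; the gluing map for $\P^1$ is $x\mapsto x^{-1}$, whose class is $\epsilon$ by Proposition~\ref{pr:power1}, not $\rho$, so the mechanism you suggest is not the right one. The paper's proof is quite concrete and different in flavor: it suspends both $\Delta_{1,1}$ and $\id\Smash\rho$ to maps $D\to\A^2-0$ (where $D$ is an explicit model for $\Sigma(\A^1-0)$), writes down polynomial formulas $\delta$ and $R$ for each, and then exhibits an explicit chain of straight-line $\A^1$-homotopies $\delta\simeq f_1\simeq f_2\simeq f_3\simeq R$ in $\A^2-0$ (Lemmas~\ref{lem:delta} and~\ref{le:delta-R}). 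There is no cofiber-sequence or Mayer--Vietoris computation; it is a hands-on deformation in affine coordinates, and that is where the care about orientations and suspension data actually gets used.
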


In classical algebraic topology these diagonal maps are all null
(except for $\Delta_{S^0}$)
because of the following lemma.

\begin{lemma}
\label{le:diagonal1}
If $X$ is a simplicial suspension then $\Delta_X$ is null.
\end{lemma}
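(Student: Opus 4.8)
The statement to prove is Lemma~\ref{le:diagonal1}: if $X$ is a simplicial suspension, then $\Delta_X$ is null-homotopic. The plan is to exploit the standard fact that a diagonal map on a suspension factors through a wedge, which is trivial inside a smash product. Concretely, write $X = \Sigma Y = C_+Y \amalg_Y C_-Y$ for some object $Y$, using the simplicial (categorical) suspension so that the cones are honest contractible objects and the pushout is a genuine model for $\Sigma Y$ at the space level. The diagonal $\Delta_X \colon X \to X \Smash X$ is then built by gluing together maps on the two cones.

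**Key steps.** First I would recall the classical argument in its pointed form: for $X = \Sigma Y$, the diagonal $X \to X \times X$ is homotopic to a map that factors through the wedge $X \vee X \subseteq X \times X$, because one can "push" a suspension coordinate in one cone toward the first factor and in the other cone toward the second factor — i.e., use the co-H structure on $\Sigma Y$ to see that $\Delta_{\Sigma Y}$ composed with the projection to $\Sigma Y \times \Sigma Y$ is homotopic to the composite $\Sigma Y \to \Sigma Y \vee \Sigma Y \hookrightarrow \Sigma Y \times \Sigma Y$ of the pinch map followed by the inclusion. Second, I would observe that the smash product $\Delta_X \colon X \to X \Smash X$ is obtained from the Cartesian diagonal by postcomposing with the quotient $X \times X \to X \Smash X$; since the wedge $X \vee X$ maps to a point under this quotient, the factorization of the Cartesian diagonal through $X \vee X$ forces $\Delta_X$ to be null. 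Third, since all of these homotopies take place at the level of (pointed simplicial presheaf) spaces and all the objects here are cofibrant in the injective model structure, the null-homotopy descends to the homotopy category and then stabilizes, giving $\Delta_X = 0$ as a stable map; this is all that is needed here.

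**Main obstacle.** The genuine work is in the first step: producing the co-H factorization of $\Delta_{\Sigma Y}$ through the wedge as an honest map (or honest homotopy) of simplicial presheaves, rather than merely up to homotopy in an abstract way. One must be slightly careful that "simplicial suspension" means the reduced simplicial suspension $S^1 \Smash Y$ (equivalently $\Sigma^{1,0}$ applied to $Y$ in the motivic world), so that the relevant co-H comultiplication $S^1 \to S^1 \vee S^1$ exists on the nose after a choice of model, and then one smashes with $Y$. Everything after that is formal: functoriality of the quotient $X\times X \to X \Smash X$, the fact that $X \vee X$ collapses in the smash, and passage to $\Ho(\Motspectra)$. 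I do not expect any motivic-specific subtlety — the argument is insensitive to whether we are over $k$ or over $\Z$ — and the only reason to isolate this as a lemma is that the genuinely interesting case, $p=q$, fails precisely because $(\A^1-0)$ is \emph{not} a simplicial suspension, which is where the rest of Section~3 does the real work.
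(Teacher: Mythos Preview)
Your argument is correct but takes a different route from the paper's. The paper writes $X = S^{1,0}\Smash Z$ and observes that $(1\Smash T\Smash 1)\circ\Delta_X = \Delta_{1,0}\Smash\Delta_Z$; since the twist is an isomorphism, the problem reduces instantly to showing that $\Delta_{1,0}\colon S^{1,0}\to S^{1,0}\Smash S^{1,0}$ is null, and this classical fact is simply imported from $\Ho(\sSet_*)$ via the left Quillen functor $c\colon \sSet_*\to\sPre_*(\Sm/k)$. Your approach instead works directly on $X$: you use the co-H structure on a suspension to factor the Cartesian diagonal $X\to X\times X$ (up to homotopy) through the wedge $X\vee X$, and then note that the wedge collapses under the quotient to $X\Smash X$. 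The paper's route is slicker because it isolates the entire content into the single classical statement about $S^{1,0}$ and avoids having to discuss the pinch map at the presheaf level; your route is more self-contained and makes the geometric reason visible without appealing to an unproved fact in $\sSet_*$. Either way the substance is minimal, which is consistent with your closing remark that the real work in Section~3 lies elsewhere.
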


\begin{proof}
We assume that $X=S^{1,0}\Smash Z$ and we
consider the commutative diagram
\[ 
\xymatrixcolsep{3.5pc}
\xymatrix{
S^{1,0}\Smash Z \ar[d]_{\Delta_X}\ar[dr]^{\Delta_{1,0} \Smash \Delta_Z} \\
S^{1,0} \Smash Z \Smash S^{1,0} \Smash Z \ar[r]^{1\Smash T\Smash 1} & S^{1,0}\Smash S^{1,0} \Smash Z \Smash Z.
}
\]
Since the horizontal map is an isomorphism in the homotopy
category, it is sufficient to check that $\Delta_{1,0}$ is null.
But this is true in the homotopy category of $\sSet_*$, and therefore is
true in pointed motivic spaces---the latter follows using the left Quillen functor $c\colon \sSet_* \ra
\sPre_*(\Sm/k)$.
\end{proof}

Our next goal will be to show that $[\Delta_{1,1}]=\rho$ in
$\pi_{-1,-1}(S)$.  We will exhibit an explicit
geometric homotopy, but to do this we will need to suspend 
so that we are looking at maps $S^{2,1}\ra S^{3,2}$ instead of
$S^{1,1}\ra S^{2,2}$.  
The point is that $(\A^2-0)$ gives a convenient geometric model for  $S^{3,2}$ 
(see Example~\ref{ex:orient}(2)).

We start by
considering the two maps
\[ \Delta_{1,1},\id \Smash \rho \colon (\A^1-0) \lra (\A^1-0)\Smash (\A^1-0).\]
We will model the suspensions of these two maps
via conveniently chosen suspension data.

Let $C$ be the pushout
$[(\A^1-0)\times \A^1] \amalg_{(\A^1-0)\times \{1\}} [\A^1\times \{1\}]$;
by left properness, $C$ is contractible (recall that all constructions
occur in the presheaf category).
Below we will provide several maps $C\ra \A^2-0$,
so note that to specify such a map it suffices  to give a polynomial
formula $(x,t)\mapsto f(x,t)=(f_1(x,t),f_2(x,t))$ with the ``formal'' properties
that $f(x,t)\neq (0,0)$ whenever $x\neq 0$, and  $f(x,1)\neq (0,0)$
for all $x$.  Rigorously, this amounts to the ideal-theoretic
conditions that  $f_1,f_2\in k[x,t]$, $x\in\Rad(f_1,f_2)$ and
$(f_1(x,1),f_2(x,1))=k[x]$.  

Let  $D= C\amalg_{(\A^1-0)} C$
where $(\A^1-0)$ is embedded in both copies of $C$ as the presheaf $(\A^1-0)\times
\{0\}$.
Note that $[C\bcof (\A^1-0) \cof C]$ is suspension data for
$(\A^1-0)$, and so $D$ is a model for the suspension of $(\A^1-0)$.  
Let us adopt the notation where we use $(x,t)$ for the coordinates
in the first copy of $C$, and $(y,s)$ for the $\A^1$-coordinate in the
second copy of $C$.  Heuristically, $D$ consists of two kinds of points
$(x,t)$ and $(y,s)$, which are identified when $s=t=0$ and $x=y$.  
Let us also use $(z,w)$ for the coordinates on the target $(\A^2-0)$. 

Define the map $\delta\colon D \ra (\A^2-0)$ by the following
formulas:
\begin{align*}
(x,t) &\mapsto (x,(1-t)x+t)=(1-t)(x,x)+t(x,1), \\
(y,s) &\mapsto ((1-s)y+s,y)=(1-s)(y,y)+s(1,y).
\end{align*}
We leave the reader to verify that these formulas do specify two
maps $C\ra (\A^2-0)$ that agree on $(\A^1-0)\times \{0\}$, and hence
determine a map $D\ra (\A^2-0)$ as claimed.  In a moment we will show
that $\delta$ gives a model for $\Sigma\Delta_{1,1}$ in the 
motivic stable homotopy
category.

Let us also define a map $R\colon D\ra (\A^2-0)$ by the formulas:
\begin{align*}
(x,t) &\mapsto (x,-1+2t), \\
(y,s) &\mapsto (y,-1).
\end{align*}
Once again, these formulas do specify two maps
$C \ra (\A^2-0)$ that agree on $(\A^1-0)\times \{0\}$, and
hence determine a map $D \ra (\A^2-0)$.
We will show that $R$ gives a model for
$\Sigma (\id \Smash \rho)$.

Because the formulas are rather unenlightening, we give pictures
that depict the maps $\delta$ and $R$.
Each picture shows a map $D\ra (\A^2-0)$,
with the two different shadings representing the image of the ``top''
and ``bottom'' halves of $D$, i.e., the two copies of $C$.  
Note that the pictures have been drawn
as if the second coordinate on $C$ were an interval $[0,1]$
instead of $\A^1$.  Really, the two shaded regions
should each stretch out infinitely in both directions; but this would produce
a picture with too much overlap to be useful.

\begin{picture}(300,120)
\put(45,20){\includegraphics[scale=0.3]{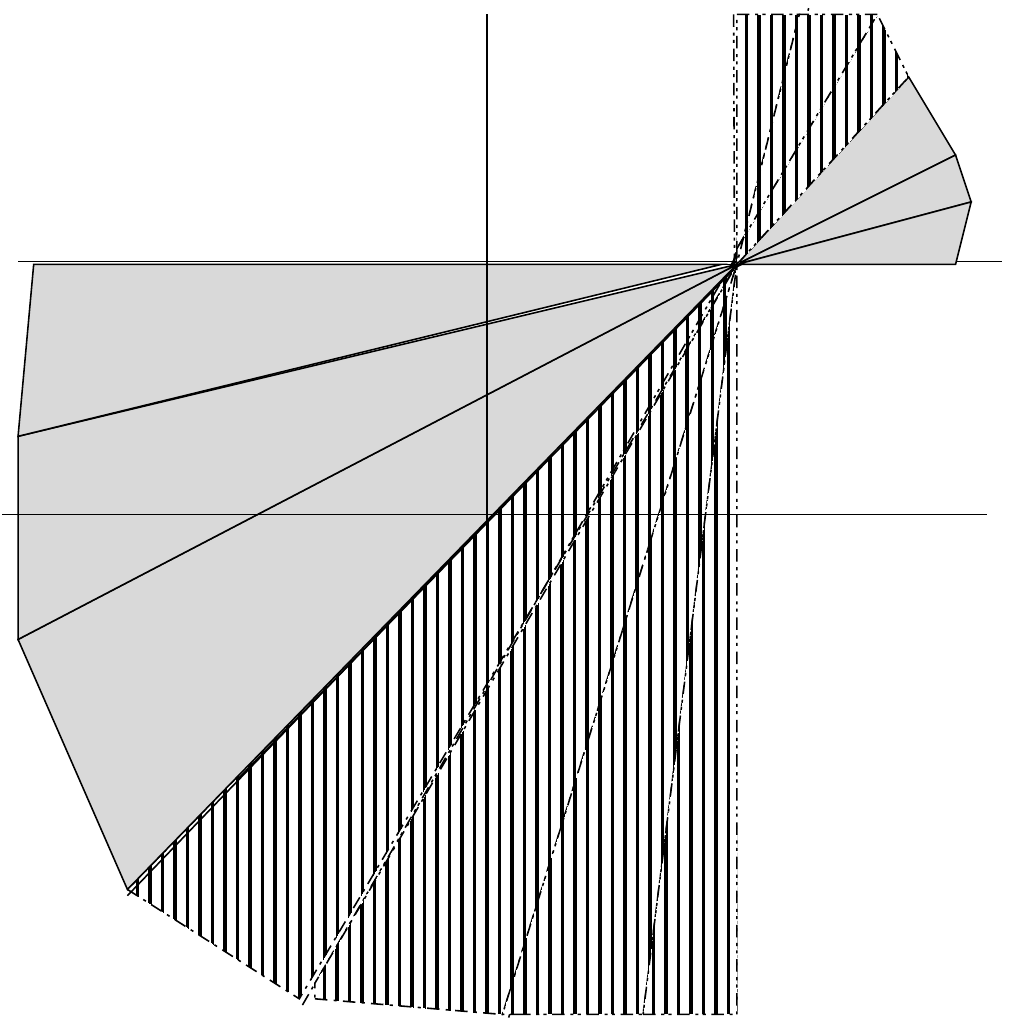}}
\put(200,20){\includegraphics[scale=0.3]{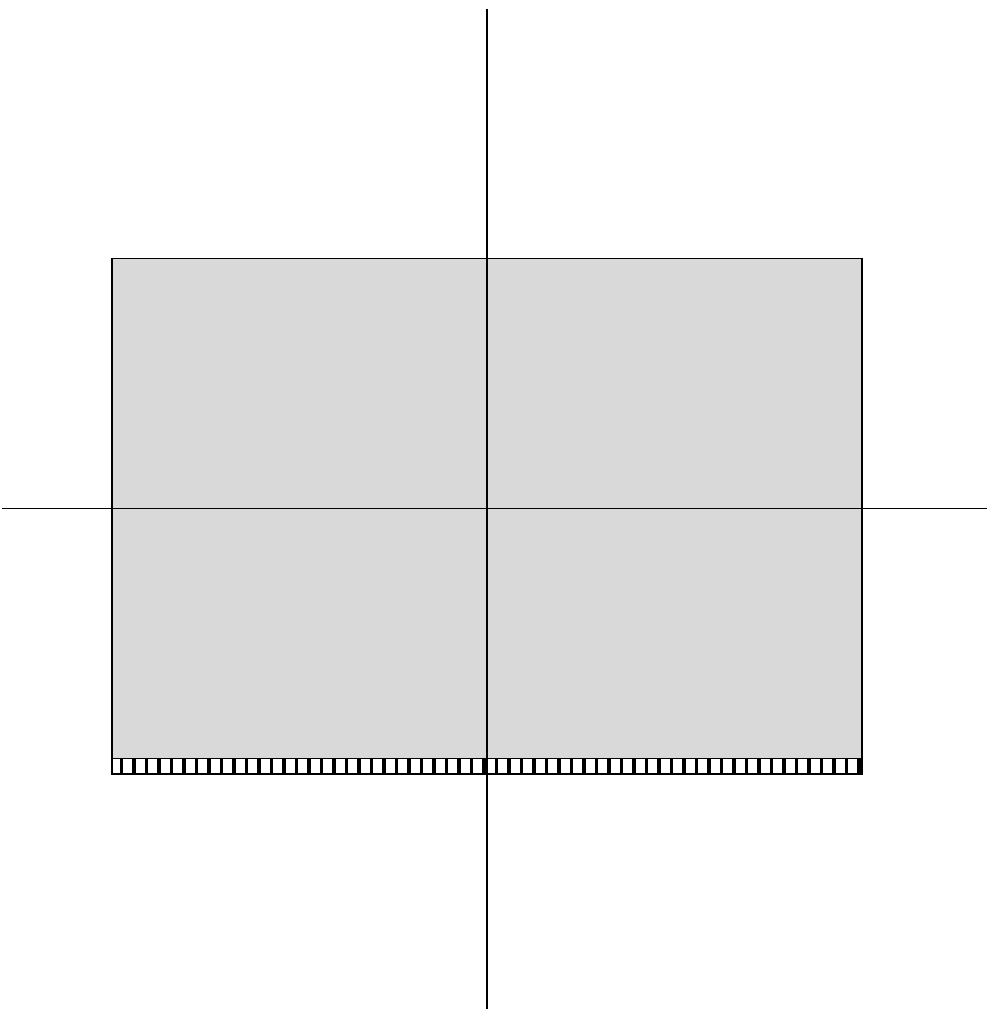}}
\put(194,35){$R$}
\put(34,35){$\delta$}
\end{picture}

In the picture for $\delta$ the reader should note the diagonal
$(\A^1-0) \inc (\A^2-0)$, and the picture should be interpreted as giving
two deformations of the diagonal: one deformation swings the punctured-line
about $(1,1)$ until it becomes the $w=1$ punctured-line (at which time
it can be ``filled in'' to an $\A^1$, not just an $\A^1-0$).  The
second swings the punctured line in the other direction until it
becomes $z=1$, and again is filled in to an $\A^1$ at that time.
This is our map $\delta\colon D\ra (\A^2-0)$.  

The picture for $R$ is simpler to interpret.  We map $\A^1-0$ to
$\A^2-0$ via $x\mapsto (x,-1)$; one deformation moves this vertically
up to $x\mapsto (x,1)$ and then fills it in to a map from $\A^1$,
whereas the second deformation leaves it constant and then fills it in.
This gives us two maps $C\ra \A^2-0$ which patch together to define
$R\colon D\ra \A^2-0$.  

Having introduced $\delta$ and $R$, our next step is to show that
they represent the elements $[\Delta_{1,1}]$ and $\rho$ in $\pi_{-1,-1}(S)$.

\begin{lemma}
\label{lem:delta}
\mbox{}
\begin{enumerate}[(a)]
\item
The map $\delta\colon D \map (\A^2-0)$
represents $[\Delta_{1,1}]$ in $\pi_{-1,-1}(S)$.
\item
The map $R\colon D \map (\A^2-0)$
represents $\rho$ in $\pi_{-1,-1}(S)$.
\end{enumerate}
\end{lemma}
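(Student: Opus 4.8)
The plan is to recognize $\delta$ as a model for $\id_{1,0}\Smash\Delta_{1,1}$ and $R$ as a model for $\id_{1,0}\Smash(\id_{1,1}\Smash\rho)$, and then to read off the two elements from Remark~\ref{re:invcoh}(i). Recall the relevant orientations: $D$ carries the orientation induced from the suspension data $[C\bcof(\A^1-0)\cof C]$ as in Remark~\ref{re:orient-const}(2), so $D\he S^{1,0}\Smash(\A^1-0)=S^{2,1}$; and $(\A^2-0)$ carries the orientation of Example~\ref{ex:orient}(2) coming from its presentation as the join $(\A^1-0)*(\A^1-0)\he S^{1,0}\Smash(\A^1-0)\Smash(\A^1-0)=S^{3,2}$ (Remark~\ref{re:orient-const}(3), Lemma~\ref{lem:join-we}). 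Transporting $\id_{1,0}\Smash\Delta_{1,1}\colon S^{1,0}\Smash(\A^1-0)\ra S^{1,0}\Smash(\A^1-0)\Smash(\A^1-0)$ across these equivalences yields a map $D\ra(\A^2-0)$; once this agrees with $\delta$ in the homotopy category, Remark~\ref{re:invcoh}(i) gives $[\delta]=[\id_{1,0}\Smash\Delta_{1,1}]=[\Delta_{1,1}]$, and the same reasoning for $R$ — stripping off first $\id_{1,0}$ and then $\id_{1,1}$, i.e.\ two applications of Remark~\ref{re:invcoh}(i) — gives $[R]=[\id_{1,1}\Smash\rho]=[\rho]=\rho$.

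\textbf{Establishing the recognition.} To compare maps, present both source and target as homotopy pushouts: $D$ is the homotopy pushout of $[C\la(\A^1-0)\times\{0\}\ra C]$, while, via the orienting equivalence of Example~\ref{ex:orient}(2), $(\A^2-0)$ is the homotopy pushout of the standard open cover $[(\A^1-0)\times\A^1\la(\A^1-0)\times(\A^1-0)\ra\A^1\times(\A^1-0)]$ — which is precisely the join model used to orient it. The goal is then to exhibit $\delta$ (resp.\ $R$), after a preliminary homotopy if necessary, as a map of these pushout diagrams inducing on the central terms the diagonal $(\A^1-0)\ra(\A^1-0)\times(\A^1-0)$ (resp.\ the map $x\mapsto(x,-1)$, which is $\id\Smash\rho$ followed by the equator inclusion). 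On the central term this is immediate from the formulas: $\delta(x,0)=(x,x)$ and $R(x,0)=(x,-1)$. The role of the explicit linear-interpolation formulas is to supply the maps on the two cones: $\delta$ on the first cone is the deformation $(1-t)(x,x)+t(x,1)$ of the diagonal into the map $x\mapsto(x,1)$, which then fills in through the contractible subscheme $\A^1\times\{1\}$; on the second cone it is the symmetric deformation through $\{1\}\times\A^1$; for $R$, the cone with coordinates $(y,s)$ is mapped by a map that is literally constant in $s$ (so that cone contributes the "trivial" null‑homotopy through $\A^1\times\{-1\}$), while the cone with coordinates $(x,t)$ slides the second coordinate linearly from $-1$ to $1$ and then fills in through $\A^1\times\{1\}$. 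Matching these cone data with the canonical contractions built into the two half-joins $(\A^1-0)\times\A^1$ and $\A^1\times(\A^1-0)$ identifies $\delta$ and $R$, in the homotopy category, with the two suspended maps above; this is exactly what is meant by "$\delta$ gives a model for $\Sigma\Delta_{1,1}$" and "$R$ gives a model for $\Sigma(\id\Smash\rho)$".

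\textbf{Main obstacle.} The real work is the orientation and sign bookkeeping: one must be sure the identification produces $+[\Delta_{1,1}]$ and $+\rho$ rather than those elements multiplied by $-1$ or by $\epsilon$. Concretely, in passing from $D$ to $S^{1,0}\Smash(\A^1-0)$ one must keep track of which copy of $C$ is the top cone and which is the bottom (Remark~\ref{rem:susp-data}), since interchanging them alters the answer by $\tau_{1,0}=-1$; and one must use, for the target, the join equivalence of Lemma~\ref{lem:join-we} that is compatible with the chosen orientation of $(\A^2-0)$, so that the cone structures cut out on $(\A^2-0)$ by the formulas (contract the "$\A^1$"-coordinate of each half-join) are exactly the ones implicit in the join model of Example~\ref{ex:orient}(2). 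Because $\delta$ and $R$ use the $(x,t)$-copy of $C$ and the $(y,s)$-copy of $C$ coherently for the two sides, and because those cone structures do match the join model, the signs come out with no correction factor — but verifying this coherence is the part that genuinely requires care, and it is where the otherwise unenlightening formulas and the accompanying pictures earn their keep. With Lemma~\ref{lem:delta} in hand, combining it with the homotopy $\delta\simeq R$ constructed next yields the $p=q=1$ case of Theorem~\ref{th:diagonal}.
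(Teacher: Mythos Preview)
Your approach is essentially the paper's: recognize $\delta$ and $R$ as models for $\Sigma\Delta_{1,1}$ and $\Sigma(\id\Smash\rho)$, then strip off the suspension via Remark~\ref{re:invcoh}(i). The orientation bookkeeping you flag as the ``main obstacle'' is exactly what the paper spends its effort on.

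One technical point deserves sharpening. You propose to exhibit $\delta$ as a map of pushout diagrams from $[C\la(\A^1-0)\ra C]$ to the open-cover diagram $[(\A^1-0)\times\A^1\la(\A^1-0)\times(\A^1-0)\ra\A^1\times(\A^1-0)]$, but $\delta$ does not literally do this: on the $\A^1\times\{1\}$ piece of the first cone, the formula gives $(x,1)$, which can have $x=0$ and hence falls outside $(\A^1-0)\times\A^1$. Your phrase ``fills in through the contractible subscheme $\A^1\times\{1\}$'' is gesturing at the fix, but it is not a map of that diagram. The paper handles this by passing to a common target $(\A^2-0)/(\A^1\times 1\cup 1\times\A^1)$: it introduces a companion map $\delta'$ from $D$ into the \emph{smash} version $[(\A^1-0)\Smash\A^1]\amalg_{(\A^1-0)\Smash(\A^1-0)}[\A^1\Smash(\A^1-0)]$ (where the troublesome points go to the basepoint), checks that $\delta$ and $\delta'$ agree after projecting to the quotient, and verifies that all the intermediate weak equivalences are orientation-preserving. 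That quotient-and-compare step is the missing bridge in your outline; once you supply it, your argument and the paper's coincide.
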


\begin{proof}
For $\delta$, consider the diagram
\[
\xymatrixcolsep{1.2pc}
\xymatrix{
& \bigl [ (\A^1-0)\Smash\A^1 \bigr ]\amalg_{(\A^1-0)\Smash (\A^1-0)} 
\bigl [\A^1\Smash (\A^1-0)\bigr ]
   \ar[dr] \\
C \amalg_{(\A^1-0)} C \ar[ur]^{\delta'}\ar[dr]_\delta &
\bigl [ (\A^1-0)\times\A^1\bigr ] \amalg_{(\A^1-0)\times (\A^1-0)}
\bigl[ \A^1\times (\A^1-0)\bigr ]
   \ar[u]\ar[r]\ar[d] & \frac{\A^2-0}{\A^1 \times 1 \cup 1 \times \A^1} \\
& \A^2-0. \ar[ur] }
\]
Here $\delta'$ takes the first copy of $C$ to $(\A^1-0) \Smash \A^1$
via $(x,t) \mapsto (x,(1-t)x+t)$,
and takes the second copy of $C$ to $\A^1 \Smash (\A^1-0) $
via the formula $(y,s) \mapsto ((1-s)y+s,y)$.
The smash products are important; they allow us to define $\delta'$
on the two copies of $\A^1 \times \{1\}$ in the two copies of $C$.  
Note that the outer parallelogram obviously commutes.  

The five unlabeled arrows are all weak equivalences between homotopy
spheres.  Orient each of the spheres in the following way:
\begin{itemize}
\item The top sphere is oriented as the suspension of $(\A^1-0) \Smash
(\A^1-0)$.
\item The middle sphere is oriented as the join $(\A^1-0) * (\A^1-0)$.
\item $\A^2-0$ is oriented in the standard way.
\item $(\A^2-0)/(\A^1\times 1\cup 1\times \A^1)$ is oriented so that
the projection map from $\A^2-0$ is orientation-preserving.
\end{itemize}
The five weak equivalences are then readily checked to be
orientation-preserving.  This part of the argument only serves to
verify that the standard orientations on the top and bottom spaces (in
the middle column) match up when we map to $[\A^2-0]/(\A^1\times 1\cup
1\times \A^1)$.  

The commutativity of the outer parallelogram now implies that $\delta$ and
$\delta'$ represent the same element of $\pi_{-1,-1}(S)$.
Since $\delta'$ is clearly a model for the suspension of $\Delta_{1,1}$,
this completes the proof that $[\delta]=[\Delta_{1,1}]$.  

The same argument shows that $R$ is a model for the suspension of 
$\id \Smash R$.  Here, we use a map $R'$ that takes the first copy
of $C$ to $\A^1 \Smash (\A^1-0)$ via $(x,t) \mapsto (x,-1+2t)$, and takes
the second copy of $C$ to $(\A^1-0) \Smash \A^1$ via
$(y,s) \mapsto (y,-1)$.
\end{proof}

Our final step is to show that $\delta$ and $R$ are homotopic.
We will give a series of homotopies that deforms $\delta$ to $R$.
Since the
formulas are again rather unenlightening we begin by giving a sequence of
pictures that depict three intermediate stages.
Each is a map $D\ra (\A^2-0)$; we will then
give four homotopies showing how to deform each picture to the next.
These pictures will not make complete sense until one compares to the
formulas in the arguments below, but it is nevertheless useful to see
the pictures ahead of time. 

\begin{picture}(340,240)(0,20)
\put(0,150){\includegraphics[scale=0.3]{homot0.pdf}}
\put(120,150){\includegraphics[scale=0.3]{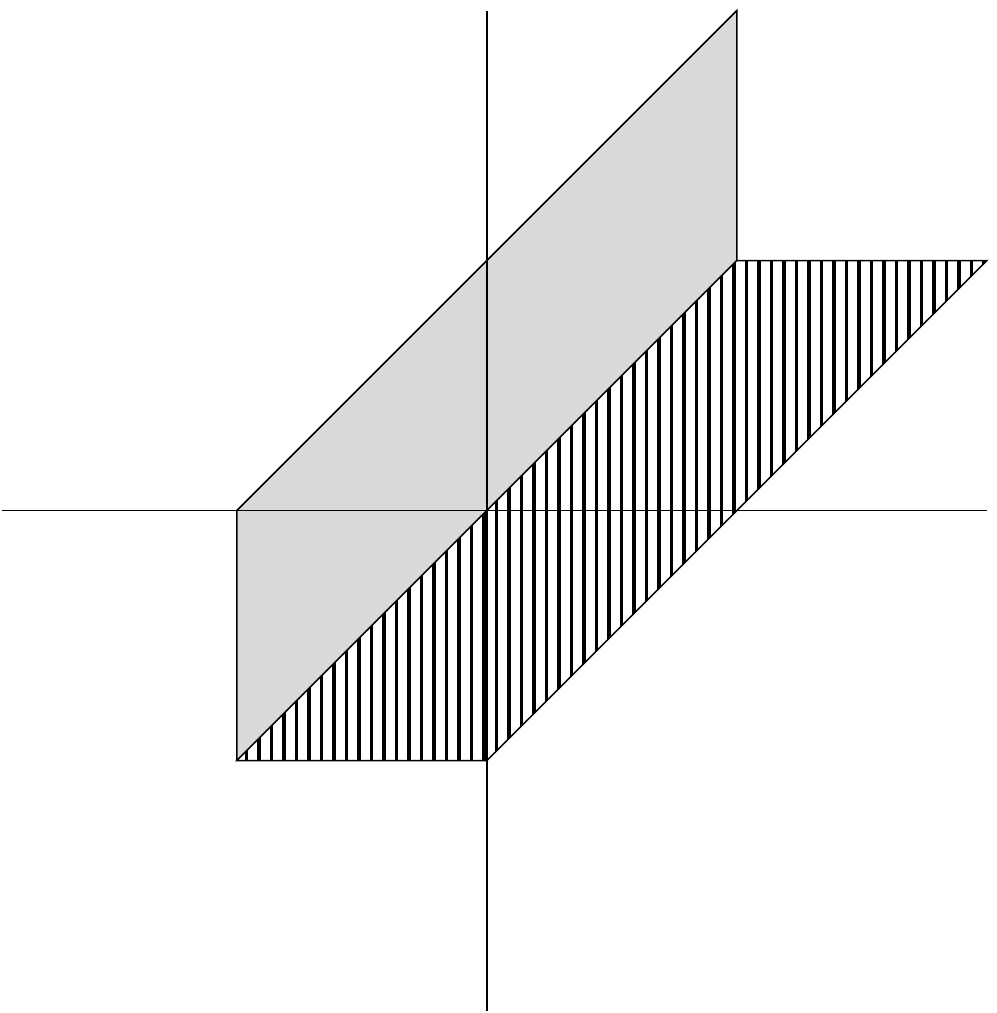}}
\put(240,150){\includegraphics[scale=0.3]{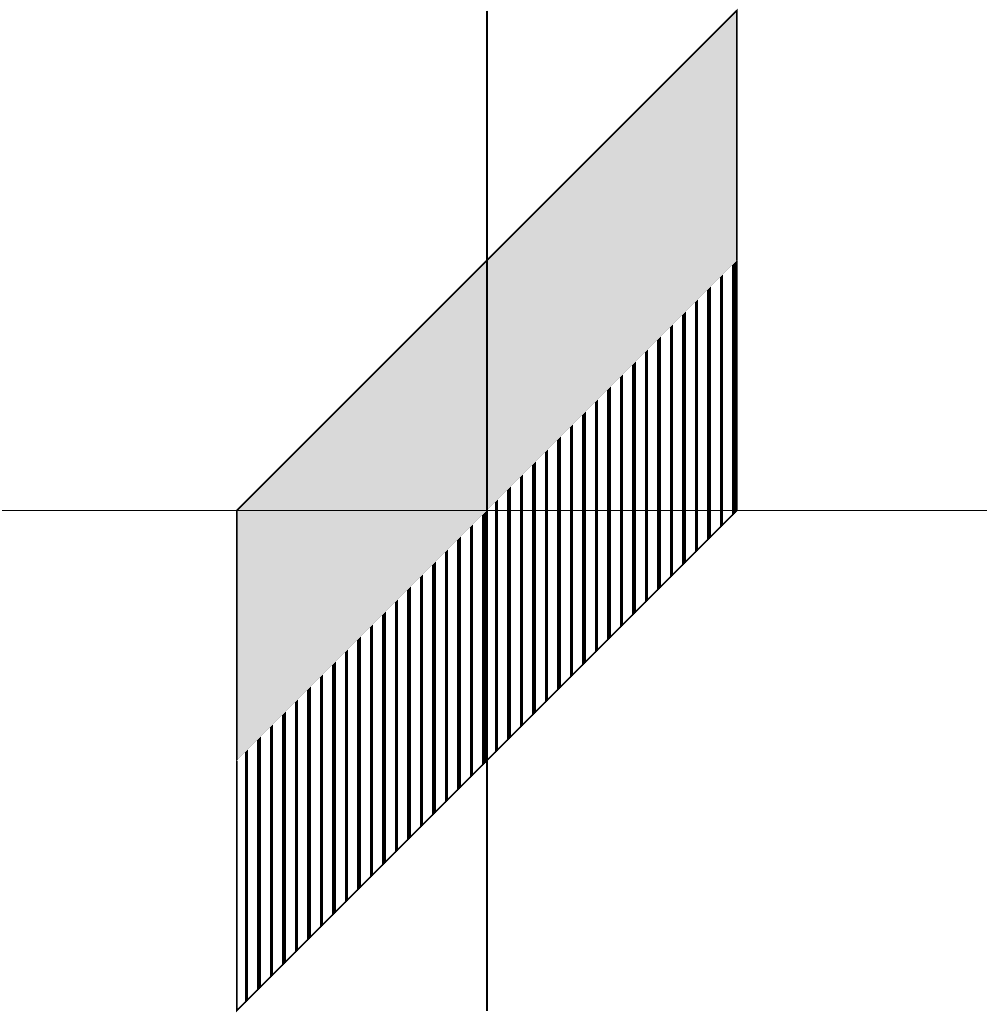}}
\put(190,30){\includegraphics[scale=0.3]{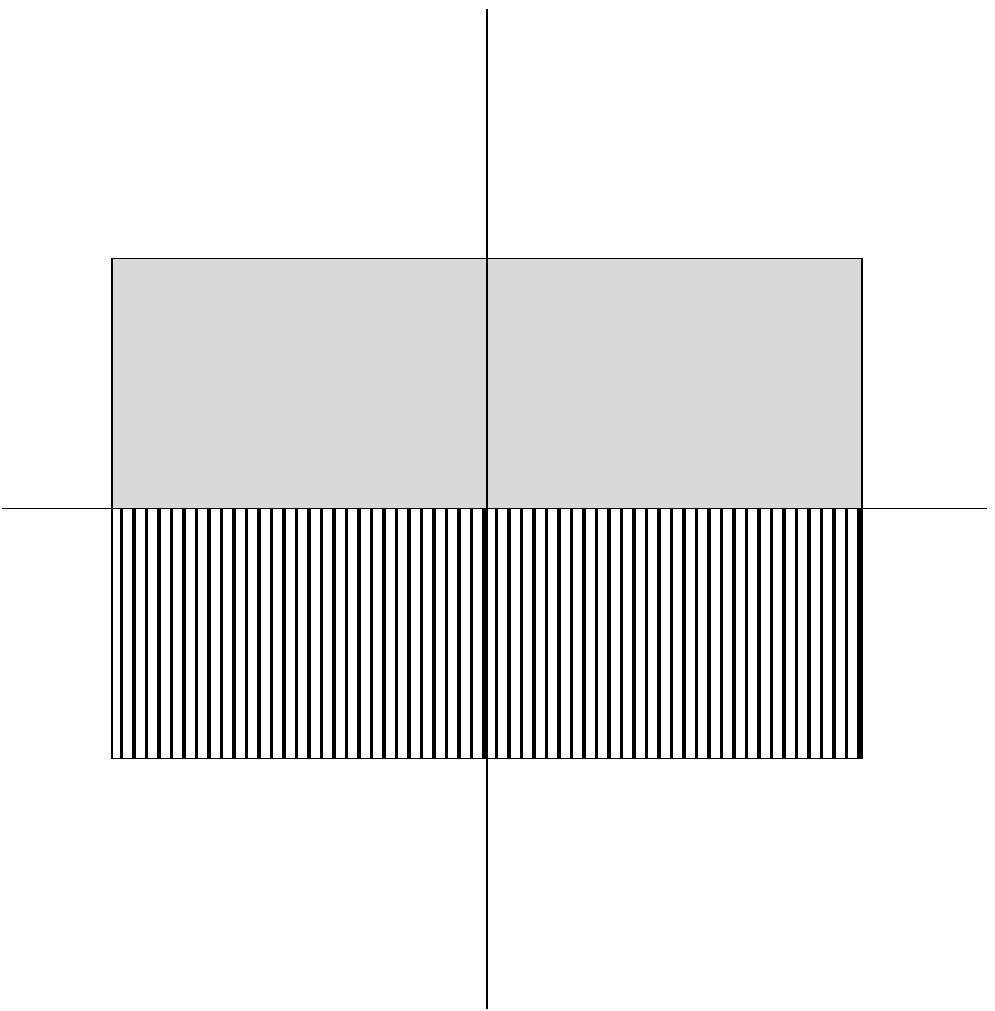}}
\put(30,30){\includegraphics[scale=0.3]{homot4.pdf}}
\put(190,170){$f_1$}
\put(310,170){$f_2$}
\put(23,35){$R$}
\put(193,35){$f_3$}
\put(70,170){$\delta$}
\put(142,72){$\lparrow$}
\put(92,193){$\parrow$}
\put(212,193){$\parrow$}
\put(270,135){$\swparrow$}
\end{picture}

These pictures (and the explicit formulas in the proof below) can
seem unmotivated.  It is useful to know that
each homotopy is a standard straight-line homotopy.  If one has the
idea of deforming $\delta$ to $R$, and the only tool one is allowed to
use is a
straight-line homotopy, a bit of stumbling around quickly leads to
the above chain of maps; there is nothing deep here.  

\begin{lemma}
\label{le:delta-R}
The maps 
$\delta$ and $R$ are homotopic as unbased maps
$D \map (\A^2-0)$.
\end{lemma}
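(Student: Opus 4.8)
The plan is to write down an explicit chain of four straight-line homotopies $D \times \A^1 \ra (\A^2-0)$ realizing the deformations pictured above: $\delta \rightsquigarrow f_1 \rightsquigarrow f_2 \rightsquigarrow f_3 \rightsquigarrow R$ (the arrows in the picture indicate that some of these go in the reverse direction, but since homotopy is symmetric this is harmless). On each copy of $C$, a homotopy is given by a polynomial formula $(x,t,u) \mapsto g(x,t,u) = (g_1(x,t,u), g_2(x,t,u))$, and to check that it lands in $(\A^2-0)$ one only needs the formal conditions that $g(x,t,u) \neq (0,0)$ whenever $x \neq 0$, together with $g(x,1,u) \neq (0,0)$ for all $x$ and $u$; rigorously these are the ideal-theoretic conditions $x \in \Rad(g_1,g_2)$ in $k[x,t,u]$ and $(g_1(x,1,u),g_2(x,1,u)) = k[x,u]$. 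One must also verify that the two formulas (on the $(x,t)$-copy of $C$ and the $(y,s)$-copy) agree on $(\A^1-0)\times\{0\}$ at every time $u$, so that they patch to a map on $D$, and that at $u=0$ and $u=1$ they restrict to the stated endpoints.

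First I would record the five maps $\delta, f_1, f_2, f_3, R$ explicitly as polynomial formulas on the two copies of $C$ (for $\delta$ and $R$ these are already given in the text). The intermediate maps should be chosen so that each consecutive pair differs by a straight-line homotopy that stays away from the origin; following the pictures, $f_1$ pivots the bottom cone's punctured line, $f_2$ straightens things out further, and $f_3$ is an intermediate configuration close to $R$. Then for each of the four homotopies I would write $H(\cdot,u) = (1-u)(\text{earlier map}) + u(\text{later map})$ on each copy of $C$ and verify the three bullet points above: patching on the overlap, correct endpoints, and — the real content — the non-vanishing condition. Because all the maps involved have the form $(\text{affine in the suspension coordinate})$, the patching and endpoint checks are essentially immediate; the non-vanishing at intermediate times $u$ is where the care is needed.

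The main obstacle will be verifying that each straight-line homotopy genuinely avoids the origin — i.e. producing, for each of the four homotopies, an honest argument (ideally a one-line algebraic identity exhibiting a unit combination, or a case analysis on where $x$, $t$, $u$ vanish) showing $H_1(x,t,u)$ and $H_1(y,s,u)$ cannot simultaneously vanish except where allowed. This is the kind of bookkeeping the authors warn is "rather unenlightening": one picks the intermediate maps precisely so that the straight line between consecutive ones is visibly a convex combination of vectors lying in a common half-plane not containing $0$, or so that one coordinate is bounded away from $0$. I would present each homotopy with its formula and a short justification of non-vanishing, and note that the endpoint and patching conditions are routine. Having exhibited the chain, the composite homotopy $D \times \A^1 \ra (\A^2-0)$ from $\delta$ to $R$ (concatenating the four, reversing orientation where the pictured arrow points backwards) finishes the proof.
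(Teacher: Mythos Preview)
Your strategy is exactly the paper's: three intermediate maps $f_1,f_2,f_3$ and four straight-line homotopies $\delta\simeq f_1\simeq f_2\simeq f_3\simeq R$, with the only content being the non-vanishing checks (the paper even leaves those to the reader).  The one gap is that you have described the formulas rather than supplied them; the paper's proof \emph{is} the formulas, so for completeness here they are (and you can check each $H_i$ really is the straight-line homotopy $(1-u)\cdot(\text{source})+u\cdot(\text{target})$, as you anticipated):
\begin{align*}
f_1\colon\ & (x,t)\mapsto(x,x+t),\quad (y,s)\mapsto(y+s,y);\\
f_2\colon\ & (x,t)\mapsto(x,x+t),\quad (y,s)\mapsto(y,y-s);\\
f_3\colon\ & (x,t)\mapsto(x,t),\quad\ \ \, (y,s)\mapsto(y,-s).
\end{align*}
Your diagnosis of the non-vanishing checks is also on target: in each case one coordinate is $x$ (resp.\ $y$) throughout, so on the interior of $C$ nothing can go wrong, and at $t=1$ (resp.\ $s=1$) the other coordinate is visibly a unit or an affine function bounded away from zero.
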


\begin{proof}
We will give a sequence of maps $H\colon D\times \A^1\ra (\A^2-0)$,
each giving an $\A^1$-homotopy from $H_0$ to $H_1$.  These will
assemble into a chain of homotopies from $\delta$ to $R$.   Note that
$D\times \A^1$ is isomorphic to 
$(C\times \A^1)\amalg_{(\A^1-0)\times \A^1} (C\times \A^1)$ 
and that $C\times \A^1$ is isomorphic to
\[ 
\bigl [(\A^1-0)\times \A^1\times \A^1\bigr ]
\amalg_{(\A^1-0)\times \{1\}\times \A^1}
\bigl [ \A^1\times \{1\}\times \A^1 \bigr ].
\]
To specify a map $C\times \A^1\ra (\A^2-0)$,
it suffices to
give a polynomial formula
$(x,t,u) \mapsto f(x,t,u) = (f_1(x,t,u), f_2(x,t,u))$
with the ``formal'' properties that
$f(x,t,u) \neq (0,0)$ whenever $x \neq 0$, and
$f(x,1,u) \neq (0,0)$ for all $x$ and $u$.
Rigorously, this amounts to the ideal-theoretic conditions that
$f_1, f_2 \in k[x,t,u]$,
$x\in \Rad(f_1,f_2)$ and $(f_1(x,1,u),f_2(x,1,u))=k[x,u]$.

Here are three maps $D\ra (\A^2-0)$:
\begin{align*}
& f_1\colon \qquad (x,t) \mapsto (x,x+t), &
(y,s) \mapsto (y+s,y) \\
& f_2\colon \qquad (x,t) \mapsto (x,x+t), &
(y,s) \mapsto (y,y-s) \\
& f_3\colon \qquad (x,t) \mapsto (x,t), & 
(y,s) \mapsto (y,-s)
\end{align*}
and
here are four homotopies:
\begin{align*}
H_1&: \begin{cases}
\quad(x,t,u) &\mapsto (x, (1-t+ut)x+t) \\
\quad(y,s,u) &\mapsto ( (1-s+us)y + s, y).
\end{cases}
\\
H_2&: 
\begin{cases}
\quad (x,t,u) &\mapsto (x, x+t),\\
\quad (y,s,u) &\mapsto ( y + (1-u)s, y - us). 
\end{cases}
\\
H_3&:
\begin{cases}
\quad (x,t,u) &\mapsto (x, x+t-ux) \\
\quad (y,s,u) &\mapsto (y, y-s-uy). 
\end{cases}
\\
H_4&:
\begin{cases}
\quad (x,t,u) &\mapsto (x, (1-u)t+u(2t-1)) \\
\quad (y,s,u) &\mapsto (y, (u-1)s-u). 
\end{cases}
\end{align*}
We leave it to the reader to verify that each formula really does
define a map $D\times \A^1\ra (\A^2-0)$, and that these give
$\A^1$-homotopies
\[ \delta \he f_1 \he f_2 \he f_3 \he R.
\]
\end{proof}

\begin{prop}
\label{pr:diag}
The diagonal map $(\A^1-0) \map (\A^1-0)\Smash(\A^1-0)$
represents $\rho$ in $\pi_{-1,-1}(S)$.
\end{prop}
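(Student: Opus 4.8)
The plan is to obtain the proposition simply by combining Lemmas~\ref{lem:delta} and~\ref{le:delta-R}. By Lemma~\ref{lem:delta}(a) the map $\delta\colon D\to(\A^2-0)$ represents $[\Delta_{1,1}]$ in $\pi_{-1,-1}(S)$, and by Lemma~\ref{lem:delta}(b) the map $R\colon D\to(\A^2-0)$ represents $\rho$; by Lemma~\ref{le:delta-R} the maps $\delta$ and $R$ are homotopic as unbased maps. If two unbased-homotopic maps $D\to(\A^2-0)$ necessarily represent the same element of $\pi_{-1,-1}(S)$, then $[\Delta_{1,1}]=\rho$, and since $\Delta_{1,1}=\Delta_{S^{1,1}}$ is by definition the (stabilized) diagonal of $(\A^1-0)=S^{1,1}$, this is precisely the assertion to be proved.

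The one step that requires care is therefore the passage from an unbased homotopy to an equality of represented elements, since ``represents an element of $\pi_{*,*}(S)$'' is a based, stable notion. To handle this I would follow the homotopy of Lemma~\ref{le:delta-R} on the basepoint of $D$ (the image of $1\in(\A^1-0)$): one checks directly from the explicit formulas that $\delta$ sends it to $(1,1)$ while $R$ sends it to $(1,-1)$, and that the path traced out by the intervening homotopies stays in the subspace $\{1\}\times\A^1$ of $(\A^2-0)$, which is $\A^1$-contractible. The trace path is thus canonically null, and the change-of-basepoint bookkeeping for maps between homotopy spheres (the ``canonical isomorphism'' formalism of Appendix~\ref{se:canon}) then upgrades the unbased homotopy of Lemma~\ref{le:delta-R} to the statement that $\delta$ and $R$ represent the same element of $\pi_{-1,-1}(S)$.

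All of the actual geometry has already been carried out in Lemmas~\ref{lem:delta} and~\ref{le:delta-R}, so I expect this based-versus-unbased bookkeeping to be the only real obstacle. The subtle point is that confining the trace of the homotopy to a contractible piece of $(\A^2-0)$ genuinely makes the comparison of represented elements canonical, so that no spurious factor sneaks in from the motivic-sign formulas of Section~\ref{se:signs}.
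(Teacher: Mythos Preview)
Your overall strategy matches the paper's: combine Lemmas~\ref{lem:delta} and~\ref{le:delta-R}, and the only remaining issue is the passage from an unbased homotopy to an equality of stable homotopy classes. The paper's treatment of this last step is, however, both simpler and more general than yours. Rather than tracking the basepoint through the explicit chain of homotopies, the paper records a general lemma (Lemma~\ref{le:unbased-sigma}, placed immediately after the proposition): if $f,g\colon X\to Y$ are based maps that are unbased homotopic, then $\Sigma^\infty f=\Sigma^\infty g$ in the motivic stable homotopy category. The proof is short: applying the unbased suspension functor $\Sigma_u$ to an unbased homotopy produces a based homotopy (the basepoint of $\Sigma_u A$ is a cone point, automatically fixed by any map of the form $\Sigma_u h$), and the natural projection $\Sigma_u A\to\Sigma A$ is a based weak equivalence. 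No information about the particular homotopy is used.

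Two further remarks on your version. First, the paper bases $D$ at the point $(1,1)$ in the \emph{first copy of $C$}, not at the image of $1\in(\A^1-0)$; with that choice one checks directly that both $\delta$ and $R$ already send the basepoint to $(1,1)\in\A^2-0$, so both are genuinely based maps and the general lemma applies on the nose. Your choice of basepoint forces $\delta$ and $R$ to land at different points, which is why you are led into a change-of-basepoint argument that the paper avoids entirely. Second, your appeal to Appendix~\ref{se:canon} is off target: that appendix treats canonical identifications between different models of a homotopy colimit, not basepoint-change issues. Your trace-in-a-contractible-subspace idea could presumably be made rigorous via a homotopy extension argument, but that is more delicate in the motivic setting than the one-line suspension trick the paper uses.
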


\begin{proof}
Consider the two maps $\delta,R\colon D\ra \A^2-0$. These are based
maps if $D$ and $\A^2-0$ are both given the basepoint $(1,1)$ (in the
case of $D$, choose the point $(1,1)$ in the first copy of $C$).  
Since $\delta$ and $R$ are unbased homotopic,
Lemma~\ref{le:unbased-sigma} below yields that $\Sigma^\infty
\delta=\Sigma^\infty R$ in the stable homotopy category.
We therefore obtain $[\Delta]=[\delta]=[R]=\rho$,
with the first and last equalities by  Lemma~\ref{lem:delta}.
\end{proof}

\begin{lemma}
\label{le:unbased-sigma}
Let $X$ and $Y$ be pointed motivic spaces, and let $f,g\colon X\ra Y$ be two
maps.  If $f$ and $g$ are homotopic as unbased maps then
$\Sigma^\infty f=\Sigma^\infty g$ in the motivic stable homotopy category.
\end{lemma}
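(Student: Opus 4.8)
The plan is to deduce the statement from the classical principle that adding a disjoint basepoint turns an unbased homotopy into a based one, and then to recover $\Sigma^\infty f$ functorially from $\Sigma^\infty(f_+)$. First I would note that an unbased homotopy $H\colon X\times \A^1\to Y$ with $H_0=f$ and $H_1=g$ (the case arising in our applications) yields, after adding disjoint basepoints, a based map $H_+\colon (X\times\A^1)_+\to Y_+$; since $(X\times\A^1)_+\iso X_+\Smash \A^1_+$ and $H_+$ restricts over the two endpoint inclusions to $f_+$ and $g_+$, the map $H_+$ is a based $\A^1$-homotopy from $f_+$ to $g_+$. (If ``homotopic as unbased maps'' is only known through a finite zig-zag of elementary $\A^1$- or simplicial homotopies, the same remark applies to each step, so it is harmless to assume a single homotopy.) Hence $f_+=g_+$ in $\Ho(\sPre_*(\Sm/k))$, and applying $\Sigma^\infty$ gives $\Sigma^\infty(f_+)=\Sigma^\infty(g_+)$ in $\Ho(\Motspectra)$.

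The second step is a general observation. For any pointed space $Z$ with basepoint $z_0\colon \Spec k\to Z$, adding a disjoint basepoint to $z_0$ produces a monomorphism $i_Z\colon S^0\to Z_+$ (hence a cofibration, since in the injective structure cofibrations are exactly the monomorphisms) whose cofiber is canonically $Z$, via the collapse $q_Z\colon Z_+\to Z$ that is the identity on $Z$ and sends the added point to $z_0$. The map $i_Z$ is split by $r_Z\colon Z_+\to S^0$ that collapses all of $Z$ to the non-basepoint. Therefore $\Sigma^\infty(i_Z)$ is a split monomorphism in the triangulated category $\Ho(\Motspectra)$, so the cofiber sequence $\Sigma^\infty S^0\to \Sigma^\infty Z_+\to \Sigma^\infty Z$ splits (concretely $\Sigma^\infty Z_+\iso \S\Wedge\Sigma^\infty Z$), yielding a map $b_Z\colon \Sigma^\infty Z\to \Sigma^\infty Z_+$ with $\Sigma^\infty(q_Z)\circ b_Z=\id_{\Sigma^\infty Z}$.

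Next I would observe that for any based map $f\colon X\to Y$ the square $q_Y\circ f_+=f\circ q_X$ commutes on the nose (both composites send $x\in X$ to $f(x)$ and the added point to the basepoint of $Y$). Applying $\Sigma^\infty$, precomposing with $b_X$, and using the splitting identity gives
\[
\Sigma^\infty(q_Y)\circ\Sigma^\infty(f_+)\circ b_X=\Sigma^\infty(f)\circ\Sigma^\infty(q_X)\circ b_X=\Sigma^\infty f,
\]
and likewise $\Sigma^\infty(q_Y)\circ\Sigma^\infty(g_+)\circ b_X=\Sigma^\infty g$. Since $\Sigma^\infty(f_+)=\Sigma^\infty(g_+)$ by the first step, the two right-hand sides agree, so $\Sigma^\infty f=\Sigma^\infty g$, as claimed.

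The only point needing any care is the existence of the splitting $b_Z$, i.e.\ the stable decomposition $\Sigma^\infty Z_+\iso \S\Wedge\Sigma^\infty Z$: this uses that $i_Z$ is a genuine cofibration (so $S^0\to Z_+\to Z$ is a homotopy cofiber sequence and $\Sigma^\infty$ preserves it) and that a split monomorphism in a triangulated category yields a split triangle. Both are standard. Everything else is formal bookkeeping of basepoints: $(-)_+$ carries unbased homotopies to based ones, $\Sigma^\infty$ is a functor on homotopy categories, and the comparison square $q_Y f_+=f q_X$ holds strictly. I expect no genuine obstacle here.
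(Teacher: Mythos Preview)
Your proof is correct, but it follows a different route from the paper's. The paper uses the unreduced suspension $\Sigma_u$: one observes that $\Sigma_u f$ and $\Sigma_u g$ are \emph{based} homotopic (the cone-point basepoint is fixed by any $\Sigma_u h$ and by any homotopy of such), and then uses the natural based weak equivalence $\Sigma_u A\to \Sigma A$ to transport this to $\Sigma^\infty(\Sigma f)=\Sigma^\infty(\Sigma g)$, hence $\Sigma^\infty f=\Sigma^\infty g$. Your approach instead passes through $(-)_+$ and the stable splitting $\Sigma^\infty Z_+\simeq \S\vee\Sigma^\infty Z$, recovering $\Sigma^\infty f$ as $\Sigma^\infty(q_Y)\circ\Sigma^\infty(f_+)\circ b_X$. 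Both arguments exploit a functorial construction that renders an unbased homotopy based and then compare back to the original pointed object; your version has the pleasant feature that the first step is completely transparent, while the paper's version has the mild advantage of being essentially unstable (it works after a single suspension, without invoking triangulated splittings). Either way the result is immediate.
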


\begin{proof}
For any motivic space $A$, 
let $C_u A=[A\times c(\Delta^1)]/[A\times \{1\}]$ denote 
the unbased simplicial cone on a space $A$, and let $\Sigma_u A$ be
the unbased suspension functor 
\[ \Sigma_u A=(C_uA)\amalg_{A\times \{0\}} (C_uA).
\]
Equip $\Sigma_u A$ with the basepoint given by the ``cone point'' in
the first copy of $C_uA$.  
When $A$ is pointed, let $\Sigma A$ be the usual based simplicial
suspension, i.e. $\Sigma A=(\Sigma_u A)/(\Sigma_u *)$.  Note that
the projection $\Sigma_uA\ra \Sigma A$ is a natural based motivic weak
equivalence.

Applying $\Sigma_u$ and $\Sigma$ to $f$ and $g$, and then stabilizing via
$\Sigma^\infty(\blank)$, yields
the diagram
\[ \xymatrixcolsep{3pc}\xymatrix{
\Sigma^\infty(\Sigma_uX) \ar@<0.5ex>[r]^{\Sigma^\infty(\Sigma_uf)}
\ar@<-0.5ex>[r]_{\Sigma^\infty(\Sigma_u g)}
\ar[d]_\he & \Sigma^\infty(\Sigma_u
Y)\ar[d]^\he \\
\Sigma^\infty(\Sigma X)\ar@<0.5ex>[r]^{\Sigma^\infty(\Sigma f)}
\ar@<-0.5ex>[r]_{\Sigma^\infty(\Sigma g)}
& \Sigma^\infty(\Sigma Y).
}
\]
Since $f$ and $g$ are unbased homotopic,
$\Sigma_u f$ and $\Sigma_u g$ are based homotopic.  
Hence $\Sigma^\infty(\Sigma_u f)=\Sigma^\infty(\Sigma_u g)$ 
in the stable homotopy category.  
The above diagram then shows that $\Sigma^\infty(\Sigma
f)=\Sigma^\infty(\Sigma g)$, and hence $\Sigma^\infty f=\Sigma^\infty g$.
\end{proof}

Our identification of $[\Delta_{1,1}]$ with $\rho$ gives a nice geometric
explanation for the following relation in $\pi_{*,*}(S)$.

\begin{cor}
\label{cor:epsilon-rho}
In $\pi_{*,*}(S)$, there is the relation $\epsilon \rho=\rho
\epsilon=\rho$.  
\end{cor}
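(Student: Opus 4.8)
The plan is to deduce the relation from the cocommutativity of the diagonal, together with the identification $[\Delta_{1,1}]=\rho$ from Proposition~\ref{pr:diag}. First, since $\epsilon$ lies in $\pi_{0,0}(S)$, which is central in $\pi_{*,*}(S)$ by the discussion in Section~2.2, the equality $\epsilon\rho=\rho\epsilon$ is automatic; so it is enough to prove $\epsilon\rho=\rho$.

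The key observation is that the diagonal $\Delta_{1,1}\colon S^{1,1}\ra S^{1,1}\Smash S^{1,1}$ is cocommutative, i.e.\ $t\circ\Delta_{1,1}=\Delta_{1,1}$, where $t$ is the twist self-map of $S^{1,1}\Smash S^{1,1}=S^{2,2}$. This holds strictly at the level of spaces (the diagonal sends a point $x$ to $[x,x]$, which is fixed by $t$), hence also in the homotopy category. Applying the bracket and the composition rule $[gf]=[g]\cdot[f]$ recorded in Section~\ref{se:signs} yields
\[
\rho \;=\; [\Delta_{1,1}] \;=\; [\,t\circ\Delta_{1,1}\,] \;=\; [t]\cdot[\Delta_{1,1}] \;=\; [t]\cdot\rho,
\]
so the whole problem reduces to recognizing that $[t]=\epsilon$. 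This is immediate from the definitions in Section~\ref{se:signs}: $\epsilon=\tau_{1,1}=[t_{(1,1),(1,1)}]$, and $t_{(1,1),(1,1)}$ differs from $t$ only by canonical isomorphisms $S^{2,2}\cong S^{1,1}\Smash S^{1,1}$, whose bracket classes are $1$ (indeed these isomorphisms are identities, since $S^{2,2}$ is defined as $(S^{1,1})^{\Smash(2)}$). Combining the two facts gives $\rho=\epsilon\rho$, which together with centrality finishes the proof.

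I do not expect a genuine obstacle here; the only thing requiring care is the bookkeeping of conventions. One must use the right-handed bracket $[\blank]=[\blank]_r$ consistently so that the composition rule produces $[t]\cdot[\Delta_{1,1}]$ in that order (harmless in this case, but worth noting); one must read $[t]$ as the bracket class of a self-map of the homotopy sphere $S^{1,1}\Smash S^{1,1}$, so that by the self-map lemma it is independent of orientation; and one must note that the orientation of $S^{1,1}\Smash S^{1,1}$ implicit in Proposition~\ref{pr:diag} is exactly the one induced from the standard orientations via Remark~\ref{re:orient-const}(1), matching the definition of $S^{2,2}$. All of these points are covered by Sections~2.2--2.3 and require no new input.
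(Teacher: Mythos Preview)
Your proof is correct and follows essentially the same approach as the paper's own proof: both use the cocommutativity of the diagonal $t\circ\Delta_{1,1}=\Delta_{1,1}$ together with Proposition~\ref{pr:diag} and centrality of $\epsilon\in\pi_{0,0}(S)$. You simply spell out more of the bookkeeping (the composition rule for $[\blank]$ and the identification $[t]=\epsilon$) that the paper leaves implicit.
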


\begin{proof}
We already know from Proposition~\ref{pr:commute}
 that $\epsilon$ is central, as it lies in
$\pi_{0,0}(S)$.   
Using the model for $\epsilon$ as the twist map on $S^{1,1}$,
note that $\epsilon
\Delta_{1,1}=\Delta_{1,1}$ as maps $S^{1,1}\ra S^{1,1}\Smash S^{1,1}$
\end{proof}

We can finally conclude the proof of the main result of this section.

\begin{proof}[Proof of Theorem~\ref{th:diagonal}]
If $p>q$ then $S^{p,q}$ is a simplicial suspension, and so
$\Delta_{p,q}$ is null by Lemma~\ref{le:diagonal1}.  For the case
where $p=q$ we consider the commutative diagram
\[ \xymatrixcolsep{2.5pc}\xymatrix{
S^{q,q} \ar@{=}[r] \ar[d]_{\Delta_{q,q}} & S^{1,1} \Smash \cdots \Smash S^{1,1}
\ar[d]^{\Delta_{1,1}\Smash \cdots \Smash \Delta_{1,1}} \\
S^{q,q}\Smash S^{q,q} \ar[r]^-T & (S^{1,1}\Smash S^{1,1})\Smash \cdots
\Smash (S^{1,1}\Smash S^{1,1})
}
\]
where $T$ is an appropriate composition of twist and associativity
maps, involving $\binom{q}{2}$ twists.  Note that
$[T]=\epsilon^{\binom{q}{2}}$, using Remark~\ref{re:invcoh}.  The
diagram then gives
\[ \rho^q=[\Delta_{1,1}]^q=[T]\cdot
[\Delta_{q,q}]=\epsilon^{\binom{q}{2}}[\Delta_{q,q}]
\]
using Proposition~\ref{pr:diag} for the first equality.  Rearranging
gives $[\Delta_{q,q}]=\epsilon^{\binom{q}{2}}\rho^q=\rho^q$, using
$\epsilon \rho=\rho$ in the final step.
\end{proof}

The following result 
about arbitrary motivic homotopy ring spectra
is a direct consequence of our work above.

\begin{cor}
\label{cor:rho-ring-spectra}
Let $E$ be a motivic homotopy ring spectrum, i.e., a monoid in the motivic
stable homotopy category.  Write $\bar{\rho}$ for
the image of $\rho$ under the unit map $\pi_{*,*}(S) \ra \pi_{*,*}(E)$.  
For each $n\geq 0$,
there is an isomorphism
$E^{*,*}(S^{n,n})\iso E^{*,*}\oplus E^{*,*} x$ as $E^{*,*}$-modules,
where $x$ is a generator of bidegree $(n,n)$.  The ring structure is completely
determined by graded commutativity in the sense of
Proposition~\ref{pr:commute} together with the fact that
$x^2=\bar{\rho}^nx$.
\end{cor}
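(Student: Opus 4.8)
The plan is to deduce the statement about $E^{*,*}(S^{n,n})$ directly from Theorem~\ref{th:diagonal} together with the basic formalism of cohomology theories represented by a motivic ring spectrum. First I would establish the module structure. Since $S^{n,n}$ is a homotopy sphere equivalent to $S^{n,n}$ (with its standard orientation), the cofiber sequence presenting it—or more simply the fact that $\widetilde{E}^{*,*}(S^{n,n}) \cong E^{*-n,*-n}$ via the suspension/orientation isomorphism, while the basepoint inclusion splits $E^{*,*}(S^{n,n}) \to E^{*,*}(\pt) = E^{*,*}$—yields $E^{*,*}(S^{n,n}) \cong E^{*,*} \oplus E^{*,*}\cdot x$ as a (left) $E^{*,*}$-module, where $x$ is the image of the canonical generator of $\widetilde{E}^{*,*}(S^{n,n})$ in bidegree $(n,n)$. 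This part is formal and identical to the classical argument; the only motivic subtlety is keeping track of which orientation on $S^{n,n}$ pins down $x$, but by Remark~\ref{re:invcoh} the choice only affects $x$ by a unit, which does not change the stated conclusion.

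Next I would compute $x^2$. The square $x^2 \in \widetilde{E}^{2n,2n}(S^{n,n})$ is by definition the image of $x \smile x$, which is computed by pulling back the external product $x \times x \in \widetilde{E}^{*,*}(S^{n,n} \Smash S^{n,n})$ along the diagonal $\Delta_{n,n}\colon S^{n,n} \to S^{n,n}\Smash S^{n,n}$. By Theorem~\ref{th:diagonal}, $[\Delta_{n,n}] = \rho^n$ in $\pi_{-n,-n}(S)$; more precisely, $\Delta_{n,n}$ is, up to canonical isomorphisms, the map $S^{n,n} \to S^{2n,2n}$ representing $\rho^n$ followed by the identification $S^{2n,2n} \cong S^{n,n}\Smash S^{n,n}$. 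Pulling the external class $x \times x$—which corresponds to the fundamental class of $S^{2n,2n}$ tensored with the square of the unit—back along this map therefore multiplies it by $\bar{\rho}^n$, the image of $\rho^n$ under the unit map $\pi_{*,*}(S) \to \pi_{*,*}(E)$. Unwinding the identifications gives exactly $x^2 = \bar{\rho}^n x$. (One should double-check here that the external product $x\times x$ restricts, under the orientation $S^{n,n}\Smash S^{n,n} \xrightarrow{\phi} S^{2n,2n}$, to the fundamental class times the square of the unit rather than to something twisted by an $\epsilon$-power; this is where the bilinearity of $\tau$ from \eqref{eq:tau} and Remark~\ref{re:invcoh} are used to see the potential sign is trivial because $\epsilon\rho = \rho$ absorbs it, paralleling Corollary~\ref{cor:epsilon-rho}.)

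Finally I would observe that graded commutativity of $E^{*,*}$, in the sense of Proposition~\ref{pr:commute} transported along the unit map $\pi_{*,*}(S) \to \pi_{*,*}(E)$ (which makes $E^{*,*}$ an algebra over $\pi_{*,*}(S)$), determines all remaining products: the product of two classes in the summand $E^{*,*}\cdot x$ is controlled by $x^2$ together with the commutation rule for moving scalars past $x$, and the product of a scalar with $x$ is the module structure. So the ring structure of $E^{*,*}(S^{n,n})$ is completely pinned down by the module structure, graded commutativity, and the single relation $x^2 = \bar{\rho}^n x$, as claimed.

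I expect the main obstacle to be the bookkeeping in the second step: making precise the sense in which pulling back $x \times x$ along $\Delta_{n,n}$ produces multiplication by $\bar{\rho}^n$ rather than $\bar{\rho}^n$ times some power of $\epsilon$ or $-1$. The clean way around this is to phrase everything in terms of the representing maps and the canonical isomorphisms $\phi$, invoke $[\Delta_{n,n}] = \rho^n$ from Theorem~\ref{th:diagonal} verbatim, and then use $\epsilon\rho = \rho$ (Corollary~\ref{cor:epsilon-rho}) to kill any $\epsilon$-powers that appear. Everything else is routine and formally identical to the corresponding classical statement about the cohomology of spheres.
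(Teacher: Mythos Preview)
Your proposal is correct and follows essentially the same approach as the paper: treat the module structure as formal, compute $x^2$ by pulling back along $\Delta_{n,n}$ and invoking Theorem~\ref{th:diagonal}, and absorb the stray $\epsilon$-power via $\epsilon\rho=\rho$. The paper is slightly more explicit in writing out the representing composites for $x$, $x^2$, and $\bar{\rho}^n x$ and reducing to the identity $[f\Smash \id_{n,n}]=\epsilon^n\rho^n=\rho^n$, but this is exactly the bookkeeping you flagged in your final paragraph.
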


In other words, the ring $E^{*,*}(S^{n,n})$ is an $\epsilon$-graded-commutative
$E^{*,*}$-algebra on one generator $x$ of bidegree $(n,n)$, subject to the 
single relation $x^2 = \bar{\rho}^n x$.

\begin{proof}
The statement about $E^{*,*}(S^{n,n})$ as an $E^{*,*}$-module is
formal; the generator $x$ is the map
$S^{n,n}\iso S^{n,n}\Smash S^{0,0}\llra{\id\Smash u} S^{n,n}\Smash E$,
where $u\colon S^{0,0}\ra E$ is the unit map.  The graded
commutativity of $E^{*,*}(S^{n,n})$ is by \cite[Remark 6.14]{D}.
It only remains to calculate $x^2$, which is the composite
\[\xymatrixcolsep{2pc}\xymatrix{
 S^{n,n}\ar[r]^-{\Delta} & S^{n,n}\Smash S^{n,n} \ar[r]^-{x\Smash x}
& (S^{n,n}\Smash E)\Smash (S^{n,n}\Smash E) \ar[r]^-{1\Smash T\Smash 1}
& S^{n,n}\Smash S^{n,n}\Smash E\Smash E \ar[d]^{\phi\Smash \mu}\\
&&& S^{2n,2n}\Smash E.
}
\]
It is useful to write this as $h(x\Smash x)\Delta$ where
$h=(\phi\Smash \mu)(1\Smash T\Smash 1)$.

Let $f\colon S^{0,0}\ra S^{n,n}$ be a map  representing
$\rho^n$.
Then the class $\bar{\rho}^n$ in $E^{n,n}$ is represented by the
composite
\[ S^{0,0}\llra{f} S^{n,n} \iso S^{n,n}\Smash S^{0,0}\llra{\id\Smash u}
S^{n,n}\Smash E,
\]
which can also be written as $x\circ f$.  
So $\bar{\rho}^n \cdot x$ is represented by the composite
\[ S^{n,n}\iso S^{0,0}\Smash S^{n,n} \llra{xf\Smash x} (S^{n,n}\Smash E) \Smash
(S^{n,n}\Smash E) \llra{h} S^{2n,n}\Smash E
\]
Note that the first two maps in this composite may be written as
\[ S^{n,n}\iso S^{0,0}\Smash S^{n,n} \llra{f\Smash 1} S^{n,n}\Smash
S^{n,n} \llra{x\Smash x} (S^{n,n}\Smash E)\Smash (S^{n,n}\Smash E).
\]

Comparing our representation of $x^2$ to that of $\bar{\rho}^n\cdot
x$, to show that they are equal it will suffice to prove  that $\Delta\colon S^{n,n}\ra
S^{n,n}\Smash S^{n,n}$ represents the same homotopy class as
$S^{n,n}\iso S^{0,0}\Smash S^{n,n}\llra{f\Smash \id_{n,n}} S^{n,n}\Smash
S^{n,n}$.  That is, we must verify that $[\Delta]=[f\Smash \id_{n,n}]$.  

But $[f\Smash \id_{n,n}]$
 equals $\epsilon^n [f] = \epsilon^n \rho^n = \rho^n$
by Remark \ref{re:invcoh} and Corollary \ref{cor:epsilon-rho}.
This equals $[\Delta_{n,n}]$ by Theorem \ref{th:diagonal}.
\end{proof}

\begin{remark}
When $E$ is mod $2$ motivic cohomology,
Corollary \ref{cor:rho-ring-spectra} is essentially
\cite[Lemma 6.8]{Vpower}.  The proof in \cite{Vpower} uses
special properties about motivic cohomology (specifically, the
isomorphism between certain motivic cohomology groups and Milnor
$K$-theory).  Our argument avoids these difficult results.
In some sense, our proof is a universal argument that reflects the spirit
of Grothendieck's original ``motivic" philosophy.
\end{remark}

\subsection{Power maps}

The following result is a simple corollary of
Theorem~\ref{th:diagonal}.  In the remainder of the paper we will only
need to use the case $n=-1$, which could be proven more directly, but
it seems natural to include the entire result.

\begin{prop}
\label{pr:power1}
For any integer $n$, let $P_n\colon (\A^1-0)\ra (\A^1-0)$ be the 
$n$th power map $z\mapsto z^n$.
In $\pi_{0,0}(S)$ one has
\[ 
[P_n]=
\begin{cases}
\frac{n}{2}(1-\epsilon) & \text{if $n$ is even,}\\
1+\frac{n-1}{2}(1-\epsilon) & \text{if $n$ is odd.}
\end{cases}
\]
\end{prop}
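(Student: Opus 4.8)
The plan is to reduce the statement to an additive recursion inside the commutative ring $\pi_{0,0}(S)$ and then run an induction. The geometric input is the identity $P_{m+n}=\mu\circ(P_m,P_n)$, where $\mu\colon(\A^1-0)\times(\A^1-0)\ra(\A^1-0)$ is multiplication and $(P_m,P_n)\colon(\A^1-0)\ra(\A^1-0)\times(\A^1-0)$, $z\mapsto(z^m,z^n)$, factors as $(P_m\times P_n)\circ\Delta$. Applying $\Si$ and invoking the stable splitting of a Cartesian product from Appendix~\ref{se:stable-split},
\[ \Si\bigl[(\A^1-0)\times(\A^1-0)\bigr]\he \Si(\A^1-0)\Wedge\Si(\A^1-0)\Wedge\Si\bigl[(\A^1-0)\Smash(\A^1-0)\bigr], \]
I would record two facts. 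First, under this splitting $\Si\mu$ is the fold map on the two wedge summands (since $\mu(z,1)=z=\mu(1,z)$), and on the smash summand it is the reduced multiplication $\bar\mu\colon(\A^1-0)\Smash(\A^1-0)\ra(\A^1-0)$, which by the construction of $\eta$ via the Hopf construction (Appendix~\ref{se:Hopf}) represents $\eta$ in $\pi_{1,1}(S)$. Second, since $P_m\times P_n$ is compatible with the two projections and with the collapse to the smash product, $\Si(P_m,P_n)$ has components $\Si P_m$, $\Si P_n$ into the wedge summands and $\Si\bigl[(P_m\Smash P_n)\circ\bar\Delta\bigr]$ into the smash summand, where $\bar\Delta\colon(\A^1-0)\ra(\A^1-0)\Smash(\A^1-0)$ is the reduced diagonal. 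Because the identity of a finite wedge is the sum of the composites ``include then project'', composing these two descriptions yields
\[ \Si P_{m+n}=\Si P_m+\Si P_n+\bar\mu\circ(P_m\Smash P_n)\circ\bar\Delta \]
in the motivic stable homotopy category.

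I then pass to $\pi_{0,0}(S)$ via $[-]=[-]_r$. Since $[-]_r$ is multiplicative on composites, the last summand contributes $[\bar\mu]\cdot[P_m\Smash P_n]\cdot[\bar\Delta]$. By Remark~\ref{re:invcoh}(iii) together with $\tau_{(0,0),(1,1)}=1$ we get $[P_m\Smash P_n]=[P_m][P_n]$, and by Proposition~\ref{pr:diag} we get $[\bar\Delta]=\rho$; moving the central factor $[P_m][P_n]$ to the front, the displayed identity becomes
\[ [P_{m+n}]=[P_m]+[P_n]+[P_m][P_n]\,\eta\rho. \]
Finally one evaluates $\eta\rho$: Morel's relations (Theorem~\ref{th:Morel}) give $\epsilon=-1-\rho\eta$, hence $\rho\eta=-1-\epsilon$; together with $\epsilon\rho=\rho$ (Corollary~\ref{cor:epsilon-rho}), graded commutativity, and $\epsilon^2=1$ (as $\epsilon$ is represented by the involutive twist map) this gives $\eta\rho=-(1+\epsilon)$. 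Hence
\[ [P_{m+n}]=[P_m]+[P_n]-(1+\epsilon)[P_m][P_n], \]
and specializing the second index to $1$ (using $[P_1]=1$) gives $[P_{n+1}]=1-\epsilon[P_n]$, while $\epsilon^2=1$ then also gives $[P_{n-1}]=\epsilon(1-[P_n])$.

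The proposition follows by induction on $|n|$ from these two one-step identities, with base cases $[P_1]=1$ and $[P_0]=0$ (as $P_0$ is the constant map at the basepoint). One checks directly, treating $n$ even and $n$ odd separately and using $\epsilon(1-\epsilon)=-(1-\epsilon)$, that the two expressions $\tfrac n2(1-\epsilon)$ and $1+\tfrac{n-1}2(1-\epsilon)$ satisfy the recursion with the stated parities.

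I expect the only substantive input to be already available --- the geometric computation $[\Delta_{1,1}]=\rho$ of Proposition~\ref{pr:diag} --- which is precisely why this result is ``a simple corollary'' of the work on diagonal maps. The remaining work is bookkeeping: one must track the canonical isomorphisms and ``motivic signs'' entering the stable splitting, and the orientation conventions on the homotopy spheres involved, carefully enough that the relevant $\tau$-factors collapse to $1$ and that $\bar\mu$, $\bar\Delta$ are identified with $\eta$, $\rho$ with no stray factor of $\epsilon$. These orientation concerns are mild here, since every class in sight lies in $\pi_{0,0}(S)$, where a self-map of a homotopy sphere has an orientation-independent class.
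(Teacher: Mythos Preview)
Your proof is correct and follows essentially the same approach as the paper's: both decompose $P_{m+n}=\mu\circ(P_m\times P_n)\circ\Delta$ via the stable splitting of the product, identify the smash-summand contributions as $\eta$ and $\rho$ respectively, and use $\eta\rho=-(1+\epsilon)$ to obtain the recursion $[P_{n+1}]=1-\epsilon[P_n]$. The only cosmetic difference is that the paper works directly with the case $m=1$ (writing $\id\times P_{n-1}$) rather than first recording the general two-variable identity $[P_{m+n}]=[P_m]+[P_n]-(1+\epsilon)[P_m][P_n]$ and then specializing.
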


\begin{proof}
We will prove that $[P_n]=1-\epsilon [P_{n-1}]$; multiplication by
$\epsilon$ then yields that $[P_{n-1}]=\epsilon-\epsilon[P_n]$.  
The main result follows by
induction (in the positive and negative directions) starting with the
trivial base case $n=0$.  

Consider the following diagram:
\[\xymatrixcolsep{2.5pc}\xymatrix{
\A^1-0 \ar[r]^-\Delta \ar[dr]_-{\Delta_s} 
& (\A^1-0) \times (\A^1-0) \ar[r]^-{\id \times
P_{n-1}} & (\A^1-0)\times (\A^1-0) \ar[r]^-\mu & \A^1-0 \\
& (\A^1-0)\Smash (\A^1-0) \ar[u]_\chi\ar[r]_{\id\Smash P_{n-1}} &
(\A^1-0)\Smash (\A^1-0). \ar[u]_\chi\ar[ur]_\eta  \\
}
\]
Here  $\chi$ is the stable splitting  of the map from the
Cartesian product to the smash product---see Appendix~\ref{se:stable-split}.
This diagram is commutative except for the left triangle, where we
have the relation
\begin{myequation}
\label{eq:Delta_s}
 \Delta=\chi \Delta_s + j (\pi_1+\pi_2) \Delta
\end{myequation}
by Lemma~\ref{lem:Hopf-diagonal},
where $\pi_1$ and $\pi_2$ are the two projections
$(\A^1-0) \times (\A^1-0) \map (\A^1-0) \vee (\A^1-0)$
and
$j: (\A^1-0) \vee (\A^1-0) \map (\A^1-0) \times (\A^1-0)$
is the usual inclusion of the wedge into the product.
Note that $P_n$ is the composition along the top of the diagram.

We now compute that
\begin{align*}
 [P_n] & =[\mu(\id\times P_{n-1})\Delta] \\
& = [\mu(\id\times P_{n-1}) (\chi
\Delta_s + j\pi_1\Delta + j\pi_2\Delta)]\\
&= [\mu(\id\times P_{n-1})\chi\Delta_s] +
[\mu(\id\times P_{n-1})j\pi_1\Delta] +
[\mu(\id\times P_{n-1})j\pi_2\Delta]\\
&= [\eta(\id\Smash P_{n-1})\Delta_s ] + [\id] + [P_{n-1}]\\
&= [\eta][P_{n-1}][\Delta_s]+1+[P_{n-1}].
\end{align*}
In the last equality we have used that $[\id\Smash
P_{n-1}]=[P_{n-1}]$, by Remark~\ref{re:invcoh}.  

Now use that $[\Delta_s] = \rho$, elements of $\pi_{0,0}(S)$ commute,
and that $\eta \rho = -(1+\epsilon)$
(see the last statement in 
Theorem~\ref{th:Morel}).  The above equation becomes
$[P_n]=-(1+\epsilon)[P_{n-1}]+1+[P_{n-1}]$, or $[P_n]=1-\epsilon[P_{n-1}]$.
\end{proof}




\section{Cayley-Dickson algebras and Hopf maps}
\label{sctn:Cayley-Dickson}

In this section we introduce the particular Cayley-Dickson algebras
needed for our work.    We then apply the general Hopf
construction from Section~\ref{se:Hopf} to define motivic Hopf
elements $\eta$, $\nu$, and $\sigma$ in $\pi_{*,*}(S)$.  We also review
some basic properties of $\eta$, due to Morel.  

\subsection{Fundamentals}
\label{subsctn:CD-fundamentals}
We begin by reviewing the notion of generalized Cayley-Dickson
algebras from \cite{A} and \cite{Sch}.
For momentary convenience, let $k$ be a field 
not of characteristic $2$;
we will explain below in Remark \ref{rem:CD-not-field}
how to deal with the integers and fields of characteristic $2$.
 
An \dfn{involutive algebra} is a $k$-vector space
$A$ equipped with a (possibly nonassociative) unital bilinear pairing
$A\times A\ra A$ and a linear anti-automorphism $(\blank)^*\colon A\ra
A$ whose square is the identity, and such that
\[ x+x^*=2t(x) \cdot 1_A, \qquad xx^*=x^*x=n(x) \cdot 1_A \]
for some linear function $t\colon A\ra k$ and some quadratic form
$n\colon A\ra k$.   Given such an algebra together with a $\gamma$ in
$k^\times$, one can form the \dfn{Cayley-Dickson double} of $A$ with
respect to $\gamma$.  This is the algebra $D_\gamma(A)$ whose underlying vector
space is $A\oplus A$ and where the multiplication and involution are given by the
formulas
\[ (a,b)\cdot (c,d)=(ac - \gamma d^*b,da+bc^*), \qquad (a,b)^*=(a^*,-b).  
\]
It is easy to check
that this is again an involutive algebra, with $t(a,b)=t(a)$ and
$n(a,b)=n(a)+\gamma n(b)$.  We will sometimes write $D(A)$ for
$D_\gamma(A)$, when the constant $\gamma$ is understood.

Because the Cayley-Dickson doubling process yields a new involutive
algebra, it can be repeated.   Let
$\und{\gamma}=(\gamma_0,\gamma_1,\gamma_2,\ldots)$ be a sequence of elements
in $k^\times$.  Start with $A_0=k$ with the trivial involution, and 
inductively define
$A_i=D_{\gamma_{i-1}}(A_{i-1})$.  This gives a sequence of
Cayley-Dickson algebras $A_0,A_1,A_2,\ldots$, where  $A_n$ has
dimension $2^n$ over $k$.  This sequence depends on the
choice of $\und{\gamma}$.  
Here are some well-known properties of these Cayley-Dickson algebras:
\begin{enumerate}[(1)]
\item $A_1$ is commutative, associative, and normed in the sense that
$n(xy)=n(x)n(y)$ for all $x$ and $y$ in $A_1$;
\item $A_2$ is associative and normed (but non-commutative in general);
\item $A_3$ is normed (but non-commutative and non-associative in general).
\end{enumerate}

The standard example of these algebras occurs 
with $k = \R$ and $\gamma_i = 1$ for all $i$.
This data gives $A_0=\R$, $A_1=\C$, $A_2=\HH$,
and $A_3=\OO$ (as well as more complicated algebras at later stages).  
In each of these algebras, the norm form $n(x)$ is the usual
sum-of-squares form on the underlying real vector space, under an
appropriate choice of basis.

Because motivic homotopy theory takes schemes, rather than rings, as
its basic objects,
 we will often make the trivial change in
point-of-view from
Cayley-Dickson {\it algebras\/} to ``Cayley-Dickson {\it
varieties\/}''.  
If $A$ is a Cayley-Dickson algebra over $k$ of dimension $2^n$, then
the associated variety is isomorphic to the affine space $\A^{2^n}$, equipped with
the corresponding bilinear map $\A^{2^n}\times \A^{2^n}\ra \A^{2^n}$ and
involution $\A^{2^n}\ra \A^{2^n}$.  We will write $A$ both for the
algebra and for the associated affine space.  
Let $S(A)$ denote the closed subvariety of $A$ defined by the equation
$n(x)=1$.  We call this subvariety the ``unit sphere'' inside of $A$,
although the word ``sphere'' should be loosely interpreted.  
If $A$ is normed, then we obtain a pairing
\[ S(A)\times S(A) \ra S(A).
\]
The rest of this section will exploit these pairings
in motivic homotopy theory.

\subsection{Cayley-Dickson algebras with split norms}
In motivic homotopy theory, the affine quadrics
$x_1^2+x_2^2+\cdots+x_n^2=1$ are not models for motivic spheres unless
the ground field contains a square root of $-1$.  
This limits the usefulness of the classical Cayley-Dickson algebras
(where $\gamma_i=1$ for all $i$), at least as far as producing
elements in $\pi_{*,*}(S)$.  Instead, we will focus on the sequence of
Cayley-Dickson algebras corresponding to
$\und{\gamma}=(-1,1,1,1,\cdots)$.  From now on let $A_i$ denote the
$i$th algebra in this sequence.  We will shortly see that the norm form in
$A_i$ is, under a suitable choice of basis, equal to the split
 form; therefore $S(A_i)$ is a model for a
motivic sphere (see Example~\ref{ex:orient}(3)).

We start by analyzing $A_1$.  This is $\A^2$ equipped with the
mutiplication $(a,b)(c,d)=(ac+bd,da+bc)$ and involution
$(a,b)^*=(a,-b)$.  With the the change of basis $(a,b)\mapsto
(a+b,a-b)$, we can write the
multiplication as $(a,b)(c,d)=(ac,bd)$ and the involution as
$(a,b)^*=(b,a)$.  In this new basis, $(1,1)$ is the identity element of $A_1$,
and the norm form is $n(a,b) = ab$.
We will abandon the ``old'' Cayley-Dickson basis for $A_1$ and from
now on always use this new basis (in essence, we simply forget that
$A_1$ came to us as $D(A_0)$).

Observe that the unit sphere $S(A_1)$ is the subvariety of $\A^2$
defined by $xy=1$, which is isomorphic to $(\A^1-0)$.  So $S(A_1)$ is
a model for $S^{1,1}$.  

\begin{remark}
\label{rem:CD-not-field}
If $2$ is not invertible in $k$, then
we cannot perform the same change-of-basis when analyzing $A_1$.  
This case includes the integers and fields of characteristic $2$.
Instead,
we can simply ignore $A_0$ altogether and rather {\it
define\/} $A_1$ to be the ring $k\times k$, together with
$A_2=D_1(A_1)$ and $A_3=D_1(A_2)$.  This is just a small shift in
perspective.  
\end{remark}

The next algebra $A_2$ is $D_1(A_1)$, which is $\A^4$ with the following
multiplication:
\begin{align*}
 (a_1,a_2,b_1,b_2)\cdot
(c_1,c_2,d_1,d_2)& =(ac-d^*b,da+bc^*)\\
&=(a_1c_1-d_2b_1,a_2c_2-d_1b_2,d_1a_1+b_1c_2,d_2a_2+b_2c_1).
\end{align*}
The involution is $(a_1,a_2,b_1,b_2)^*=(a_2,a_1,-b_1,-b_2)$, and the
norm form is readily checked to be
\[ n(a_1,a_2,b_1,b_2)=a_1a_2+b_1b_2.
\]
This is the split quadratic form on $\A^4$.

The algebra $A_3=D_1(A_2)$ has underlying variety $\A^8$.  We will not write
out the formulas for multiplication and involution here, although they
are easy enough to deduce.  The norm form on $A_3$ is once again
the split form.

The algebras $A_1$, $A_2$, and $A_3$ all have normed multiplications,
in the sense that $n(xy)=n(x)n(y)$ for all $x$ and $y$.  

It is useful to regard $A_1$, $A_2$, and $A_3$ as analogs of the
classical algebras $\C$, $\HH$, and $\OO$.  We call them the ``split
complex numbers'', the ``split quaternions'', and the ``split octonions'',
respectively.  One should not take
the comparisons too seriously: for example, $A_1$ has zero divisors 
 whereas $\C$ is a field.  
Nevertheless, they are normed algebras that turn out to play
roles in motivic homotopy that are entirely analogous to the roles
that $\C$, $\HH$, and $\OO$ play in ordinary homotopy theory. 
We will adopt the notation
\[ A_\C=A_1, \quad  A_\HH=A_2, \quad A_\OO=A_3; 
\]
\[
S_\C=S(A_\C), \ S_\HH=S(A_\HH),\ S_\OO=S(A_\OO).
\]
The multiplications in $A_\C$, $A_\HH$, and $A_\OO$ restrict to
give pairings $S_\C\times S_\C\ra S_\C$, $S_\HH\times S_\HH \ra
S_\HH$, and $S_\OO\times S_\OO\ra S_\OO$.  
Note that
\[ S_\C\he S^{1,1}, \quad S_\HH\he S^{3,2}, \quad S_\OO\he S^{7,4}.
\]
More generally, $S(A_n)$ is a model for $S^{2^n-1,2^{n-1}}$ for all $n$.

Recall the isomorphism $S_\C\llra{\he} \A^1-0$ given by  $(a_1,a_2)\mapsto
a_1$.  This provides an orientation on $S_\C$.
Under this isomorphism, the product $S_\C\times S_\C\ra S_\C$
coincides with the usual multiplication map on $(\A^1-0)$.

We orient $S_\HH$ via the weak equivalence 
$S_\HH\llra{\he} (\A^2-0)$ that sends $(a_1,a_2,b_1,b_2)$ to $(a_1,b_1)$,
using the standard orientation on $(\A^2-0)$ from Example \ref{ex:orient}(2).
Similarly, we orient $S_\OO$ via the analogous weak equivalence
$S_\OO\llra{\he} (\A^4-0)$.

\begin{remark}
\label{rem:SL2}
The algebra $A_\HH$ is isomorphic to the algebra  of 
$2 \times 2$ matrices, via the isomorphism
\[ (a_1,a_2,b_1,b_2)\mapsto \begin{bmatrix} 
a_1 & b_1 \\
-b_2 & a_2
\end{bmatrix}.
\]
This is easy to check using the formula for multiplication in $A_\HH$.
Under this isomorphism, the conjugate of a matrix $A$ corresponds to 
the classical adjoint of $A$, i.e.,
\[
M=\begin{bmatrix} a & b \\ c & d \end{bmatrix}
\mapsto
M^*=\adj M=\begin{bmatrix} d & -b \\ -c & a \end{bmatrix}.
\]
The norm form is equal to the  determinant, and consequently one has
$S_\HH\iso \SL_2$.  The pairing $S_\HH\times S_\HH\ra S_\HH$ is just
the usual product on $\SL_2$.  
\end{remark}

\begin{remark}
\label{re:splitting-H}
The `splitting' of the algebra $A_\C$ gives us coordinates having the
property that the multiplication rule does not mix the two
coordinates: that is, $(a,b)(c,d)=(ac,bd)$.  This non-mixing
property propagates somewhat into $A_\HH$, and we will need to use
this at a key stage below.  Let $\omega\colon A_\HH \ra \A^2$ be the
map $(a_1,a_2,b_1,b_2)\mapsto (a_1,b_2)$.  Then the diagram
\[ \xymatrix{
A_\HH \times A_\HH \ar[r]^\mu \ar[d]_{\id\times \omega} & A_\HH \ar[d]^\omega \\
A_\HH\times \A^2 \ar[r]^-{\mu'} & \A^2
}
\]
commutes,
where $\mu'$ is given by $(a_1,a_2,b_1,b_2)*(x,y)=(a_1x-yb_1,ya_2+b_2x)$.
In words, for all $u$ and $v$ in $A_\HH$,
the first and last coordinates of $uv$ 
only depend on the first and last coordinates of
$v$.  It is somewhat more intuitive to see this using the isomorphism
of Remark \ref{rem:SL2}, 
where the map $\mu'$ corresponds to 
the usual action of matrices on
column vectors (up to some sporadic signs).
\end{remark}

\subsection{The Hopf maps}

The basic definition of the motivic Hopf maps now proceeds just as
in classical homotopy theory.  The reader should review Appendix~\ref{se:Hopf}
at this point, for the definition and properties of the  Hopf
construction.

\begin{defn}
\mbox{}
\begin{enumerate}[(1)]
\item
The first Hopf map $\eta$ is defined to be the element of $\pi_{1,1}(S)$
represented by the  Hopf construction of the multiplication map
$S_\C \times S_\C \map S_\C$.
\item
The second Hopf map $\nu$ is defined to be the element of $\pi_{3,2}(S)$
represented by  the Hopf construction of the multiplication map
$S_\HH \times S_\HH \map S_\HH$.
\item
The third Hopf map $\sigma$ is defined to be the element of $\pi_{7,4}(S)$
represented by  the Hopf construction of the multiplication map
$S_\OO \times S_\OO \map S_\OO$.
\end{enumerate}
\end{defn}

The following result and its proof are  due to Morel \cite[Lemma 6.2.3]{M2}:

\begin{lemma}
\label{lem:epsilon-eta}
The elements $\eta$ and $\eta \epsilon$ are equal in $\pi_{1,1}(S)$.
\end{lemma}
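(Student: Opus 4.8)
The plan is to exploit the fact that the pairing defining $\eta$ is \emph{commutative}, together with the naturality of the Hopf construction. Under the orientation $\SC\he(\A^1-0)$ fixed above, the multiplication $\mu\colon \SC\times\SC\ra\SC$ becomes the usual multiplication $(x,y)\mapsto xy$ on $(\A^1-0)$, and hence satisfies $\mu=\mu\circ t$, where $t\colon \SC\times\SC\ra\SC\times\SC$ is the coordinate swap. Since $\eta=[H(\mu)]$ by definition, the whole argument reduces to understanding how the Hopf construction behaves when a pairing is replaced by its transpose.

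First I would isolate the relevant naturality statement from Appendix~\ref{se:Hopf} (the same circle of ideas that produces the melding formula of Proposition~\ref{pr:H(meld)}): for any pairing $f\colon X\times Y\ra Z$ with transpose $f^{T}=f\circ t\colon Y\times X\ra Z$, the Hopf construction $H(f^{T})\colon Y*X\ra \Sigma Z$ agrees, modulo the canonical isomorphisms of Remark~\ref{re:orient-const}, with the composite
\[
Y*X \llra{\sim} X*Y \llra{H(f)} \Sigma Z \llra{\sim} \Sigma Z,
\]
in which the first map is the canonical swap of the two join factors and the last is the flip reversing the suspension coordinate of $\Sigma Z=S^{1,0}\Smash Z$. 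Passing to classes in $\pi_{*,*}(S)$, and using the identification $X*Y\he S^{1,0}\Smash X\Smash Y$, the join swap is $(\text{flip of }S^{1,0})\Smash t_{X,Y}$, with class $(-1)\cdot[t_{X,Y}]$, while the target flip has class $-1$; the two signs cancel, leaving $[H(f^{T})]=[t_{X,Y}]\cdot[H(f)]$.

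Now specialize to $X=Y=\SC\he S^{1,1}$, $Z=\SC$, and $f=\mu$. Then $[t_{S^{1,1},S^{1,1}}]=\tau_{(1,1),(1,1)}=\epsilon$ by the definition of $\epsilon$ in Section~\ref{se:signs}. Commutativity of $\mu$ gives $\mu^{T}=\mu$, hence
\[
\eta=[H(\mu)]=[H(\mu^{T})]=\epsilon\cdot[H(\mu)]=\epsilon\eta,
\]
and since $\epsilon\in\pi_{0,0}(S)$ is central by Proposition~\ref{pr:commute}, also $\eta=\eta\epsilon$.

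The one genuinely delicate point is the sign bookkeeping in the middle step: one must check that the suspension-coordinate flip forced onto the target and the flip concealed inside the canonical join swap really do cancel, so that the net correction factor is $[t_{X,Y}]$ rather than $-[t_{X,Y}]$ — the latter would yield the false relation $\eta=-\epsilon\eta$. Pinning this down means matching $H(\mu^{T})$ to $H(\mu)$ through the canonical isomorphisms of Remark~\ref{re:orient-const} and the ``top cone / bottom cone'' conventions of Remark~\ref{rem:susp-data}, keeping track at each stage of which cone is designated the top. Everything else — commutativity of $\mu$, the equivalence $\SC\he S^{1,1}$, and the evaluation $\tau_{(1,1),(1,1)}=\epsilon$ — is immediate from what has already been set up.
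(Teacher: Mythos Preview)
Your argument is correct in outline and rests on the same key input as the paper's---the commutativity of $\mu$ on $S_\C$---but the route you take is considerably more laborious than necessary. You go back to the join model of the Hopf construction and track the effect of the transpose through the canonical join swap $Y*X\he X*Y$ and the suspension-coordinate flip on $\Sigma Z$, arriving at $[H(f^T)]=[t_{X,Y}]\cdot[H(f)]$ after a sign cancellation you flag as delicate and do not fully verify.

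The paper avoids all of this by using the alternative model of the Hopf construction from Proposition~\ref{prop:Hopf-model}, namely $H(h)=h\circ\chi$ (stably). With that model the entire proof is a single commuting square:
\[
\xymatrix{
S_\C\Smash S_\C \ar[r]^-{\chi}\ar[d]_{T_\Smash} & S_\C\times S_\C \ar[r]^-{\mu}\ar[d]_{T_\times} & S_\C \ar[d]^{=}\\
S_\C\Smash S_\C \ar[r]_-{\chi} & S_\C\times S_\C \ar[r]_-{\mu} & S_\C.
}
\]
The left square commutes by Lemma~\ref{lem:Hopf-twist} (compatibility of $\chi$ with twists), the right by commutativity of $\mu$, and both horizontal composites are $H(\mu)$ representing $\eta$. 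Since the left vertical map represents $\epsilon$, one reads off $\eta=\eta\epsilon$ immediately---no joins, no suspension flips, no sign cancellation to worry about. In effect, Lemma~\ref{lem:Hopf-twist} has already packaged exactly the ``delicate point'' you identify; your argument is essentially re-deriving that lemma at the level of joins rather than invoking it.
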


\begin{proof}
Multiplication on $S_\C$ is commutative.  
Recall that $\epsilon$
is represented by the twist map on $S_\C \Smash S_\C$.
The diagram
\[
\xymatrix{
S_\C \Smash S_\C \ar[d]_\epsilon \ar[r]^\chi & 
  S_\C \times S_\C \ar[r]^-\mu \ar[d]_T &
S_\C \ar[d]^= \\
S_\C \Smash S_C \ar[r]_\chi & 
  S_\C \times S_\C \ar[r]_-\mu &
S_\C
}
\]
commutes by Lemma~\ref{lem:Hopf-twist}, where $\mu$ is multiplication
and $T$ is the twist map.  The horizontal compositions represent $\eta$.
\end{proof}

The above lemma deduces an identity involving the Hopf elements as a
consequence of the commutativity of $A_\C$.  The point of the next
section will be to deduce some other identities from deeper properties
of the Cayley-Dickson algebras.

\begin{remark}
The above proof works unstably, but only after three simplicial suspensions.  
As explained in Appendix~\ref{se:stable-split1},
the map $\chi$ exists as an unstable map $\Sigma(S^{1,1}\Smash
S^{1,1})\ra 
\Sigma( S^{1,1}\times S^{1,1})$, which gives a model for $\eta$ in the
unstable group $\pi_{3,2}(S^{2,1})$.  But the left square in the
diagram is only guaranteed to commute after an additional {\it two\/}
suspensions, by Lemma~\ref{lem:Hopf-twist}.  The necessity of some of these
suspensions is demonstrated by the fact that classically one does not
have
$\eta=-\eta$ in $\pi_3(S^2)$.  
\end{remark}

The next result is also due to Morel \cite[Lemma 6.2.3]{M2}.  We
include this result and its proof only for didactic purposes.
The proof demonstrates how one must be careful with orientations,
canonical isomorphisms, and commutativity relations.

\begin{prop}
Let $\P^1$ and $(\A^2-0)$ be oriented as in 
Example~\ref{ex:orient}.
The usual projection
$\pi\colon (\A^2-0) \ra \P^1$ represents the element $\eta$ in
$\pi_{1,1}(S)$.
\end{prop}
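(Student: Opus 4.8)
The plan is to compare the projection $\pi\colon (\A^2-0) \ra \P^1$ to the Hopf construction of the multiplication map $\mu\colon S_\C \times S_\C \ra S_\C$, using the identification $S_\C \he (\A^1-0) = S^{1,1}$ and the standard orientations. Recall from Appendix~\ref{se:Hopf} that the Hopf construction of a pairing $X \times Y \ra Z$ is built from the join $X * Y$ mapping to the (unreduced) suspension of $Z$; here $X = Y = S_\C \simeq (\A^1-0)$, so $X * Y \simeq (\A^1-0) * (\A^1-0) \simeq (\A^2-0)$ by Example~\ref{ex:orient}(2), and $\Sigma Z = \Sigma(\A^1-0) \simeq \P^1$ by Example~\ref{ex:orient}(1) (via the standard affine covering diagram $\A^1 \bcof (\A^1-0) \cof \A^1$, whose pushout maps by a weak equivalence to $\P^1$). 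So both sides of the desired identity are, up to the chosen orientations, maps $(\A^2-0) \ra \P^1$, and the claim is that they agree in $\pi_{1,1}(S)$.

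The key steps, in order, would be: (1) Write down the explicit model of the Hopf construction $H(\mu)$: on the join $(\A^1-0) * (\A^1-0)$, presented as the double mapping cylinder $[(\A^1-0)\times \A^1] \amalg_{(\A^1-0)\times(\A^1-0)} [\A^1 \times (\A^1-0)]$, the map to $\Sigma(\A^1-0) = [C_+(\A^1-0)] \amalg [C_-(\A^1-0)]$ sends the two ``solid'' ends into the two cones and is $\mu(a,b) = ab$ on the ``equator'' $(\A^1-0)\times(\A^1-0)$. (2) Present the projection $\pi\colon (\A^2-0)\ra \P^1$ on the standard affine cover: over $U_1 = \{[x:y] : x\neq 0\} \cong \A^1$ it is $(z,w)\mapsto w/z$ and over $U_2 = \{[x:y]: y\neq 0\}\cong \A^1$ it is $(z,w)\mapsto z/w$, so the preimages of $U_1$, $U_2$ give the two ends of a double mapping cylinder structure on $(\A^2-0)$ and the ``overlap'' piece $(\A^1-0)\times(\A^1-0)$ maps by $(z,w)\mapsto w/z$ (or its inverse). (3) Exhibit a map of double mapping cylinder diagrams — i.e. a weak equivalence $(\A^1-0)*(\A^1-0) \ra (\A^2-0)$ over a weak equivalence $\Sigma(\A^1-0)\ra \P^1$ — intertwining $H(\mu)$ with $\pi$; the equator condition is just that $\mu(a,b)=ab$ and $w/z$ correspond under $(a,b)\leftrightarrow (z,w)$ up to a unit, which one arranges by the substitution $z = b^{-1}$, $w = a$ (so $w/z = ab$), possibly composed with the inversion automorphism of $(\A^1-0)$. (4) Check that, with the standard orientations of Example~\ref{ex:orient}(1),(2),(3) and the induced orientations on joins and suspensions from Remark~\ref{re:orient-const}, these weak equivalences are orientation-preserving; this is where one must be careful about which cone is the ``top'' cone and whether an inversion automorphism of $(\A^1-0)$ (which has degree... that needs checking, cf.\ Proposition~\ref{pr:power1} with $n=-1$, giving $[\,P_{-1}\,] = \epsilon$ — so inversion is \emph{not} orientation-preserving, and one must track the resulting factor of $\epsilon$) or an extra twist has intervened. (5) Conclude $[\pi] = \epsilon^{?}\cdot \eta$, and then use Lemma~\ref{lem:epsilon-eta}, which says $\epsilon\eta = \eta$, to absorb any such factor and obtain $[\pi] = \eta$ exactly.

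I expect step (4)--(5) to be the main obstacle: the map and the homotopy are essentially forced, but bookkeeping the orientation on the join $(\A^1-0)*(\A^1-0)$ against the standard orientation on $(\A^2-0)$, the orientation on $\Sigma(\A^1-0)$ against the standard orientation on $\P^1$, and the role of the inversion automorphism (degree $\epsilon$ by Proposition~\ref{pr:power1}) is exactly the kind of sign-chasing the authors flag in the sentence preceding the Proposition. The saving grace is Lemma~\ref{lem:epsilon-eta}: any stray factor of $\epsilon$ is harmless because $\epsilon\eta = \eta$, so one does not actually need to pin down the orientation discrepancy precisely, only to know it lies in the subgroup $\{1,\epsilon,\ldots\}$ generated by $\epsilon$ (equivalently, that it comes from automorphisms of $(\A^1-0)$ or twists of smash factors, all of which act on $\eta$ through multiplication by powers of $\epsilon$). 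An alternative, possibly cleaner route that avoids some of this is to identify $\pi\colon (\A^2-0)\ra \P^1$ directly with the \emph{geometric} Hopf map associated to the $\Gm$-action on $\A^2-0$, i.e.\ realize $(\A^2-0)$ as the total space of the tautological-type $\Gm$-bundle over $\P^1$, and match its cofiber/clutching description with the double mapping cylinder above; but the explicit-homotopy approach is most in keeping with the paper's style.
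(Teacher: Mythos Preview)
Your proposal is correct and follows essentially the same approach as the paper's proof. The paper organizes the argument slightly differently by first introducing the auxiliary pairing $f(x,y)=x^{-1}y$ and showing directly that $\pi$ is the Hopf construction on $f$ (via the standard affine cover of $\P^1$ and its preimage under $\pi$, exactly as in your steps (2)--(3)); then it relates $H(f)$ to $H(\mu)=\eta$ by precomposing with $(\blank)^{-1}\times\id$, which contributes a single factor of $\epsilon$ (by Proposition~\ref{pr:power1} with $n=-1$) absorbed via Lemma~\ref{lem:epsilon-eta}. This two-step organization isolates the orientation-checking to the claim $[\pi]=[H(f)]$, where the paper verifies the relevant weak equivalences are orientation-preserving by direct appeal to the definitions in Example~\ref{ex:orient}, rather than leaving the discrepancy as an unspecified power of $\epsilon$ as you suggest in step (5); but the content is the same.
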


\begin{proof}
We have the diagram
\[
\xymatrix{
S_\C \Smash S_\C \ar[r]^\chi \ar[d]_{(-)^{-1} \Smash \id} & 
  S_\C \times S_\C \ar[r]^-f \ar[d]_{(-)^{-1} \times \id} &
  S_\C \ar[d]_= \\
S_\C \Smash S_\C \ar[r]_{\chi} & S_\C \times S_\C \ar[r]_-\mu & S_\C,
}
\]
where $\mu$ is multiplication
and $f$ is the map $(x,y) \mapsto x^{-1}y$.
By definition, $\eta$ is the composition along the bottom of the
diagram.
Recall that the inverse map on $S_\C$ represents $\epsilon$, by
Proposition~\ref{pr:power1}; therefore $(\blank)^{-1}\Smash \id$ represents $\epsilon$ as well, using Remark~\ref{re:invcoh}.
Since $\eta$ equals $\eta \epsilon$ by Lemma~\ref{lem:epsilon-eta},
it suffices to show that the composition along the top of the 
diagram is equivalent to $\pi$, i.e., that $\pi$ is the Hopf
construction on $f$.

Let $U_1 \lla U_1\cap U_2 \lra U_2$ be the standard affine cover of
$\P^1$, as in Example~\ref{ex:orient}(1).  Taking the preimage under
$\pi$ gives a cover of $(\A^2-0)$, so we get the diagram 
\begin{myequation}
\label{eq:2x3}
 \xymatrix{
\pi^{-1}U_1 \ar[d] & \pi^{-1}(U_1\cap U_2)\ar[d]\ar[l]\ar[r] & \pi^{-1}U_2\ar[d] \\
U_1 & U_1\cap U_2\ar[l]\ar[r] & U_2,
}
\end{myequation}
where $(\A^2-0)$ and $\P^1$ are the homotopy pushouts of the 
top and bottom rows respectively.

Diagram (\ref{eq:2x3})
is isomorphic to the diagram
\begin{myequation}
\label{eq:2x3a}
 \xymatrix{
(\A^1-0)\times \A^1 \ar[d]_{(x,y)\mapsto x^{-1}y} & (\A^1-0)\times
(\A^1-0)\ar[r]^-i
\ar[d]^f \ar[l]_-i
& \A^1\times (\A^1-0)\ar[d]^{(x,y)\mapsto xy^{-1}}  \\
\A^1 & (\A^1-0) \ar[r]^{\inv} \ar[l]_i & \A^1
}
\end{myequation}
where all maps labelled $i$ are the inclusions.
This new diagram in turn maps, via a natural weak equivalence, to the diagram
\begin{myequation}
\label{eq:2x3b}
 \xymatrix{
(\A^1-0) \ar[d] & (\A^1-0)\times (\A^1-0)\ar[r]^-{\pi_2}\ar[l]_-{\pi_1}\ar[d]^f & (\A^1-0) \ar[d] \\
{*} & (\A^1-0 )\ar[r]\ar[l] & {*}. 
}
\end{myequation}
Diagram (\ref{eq:2x3b})
induces a map on homotopy pushouts of the rows, which
is equal to the Hopf construction $H(f)$ on $f$ 
by definition (see Appendix~\ref{se:Hopf}). 
So we have produced a zig-zag of equivalences between
$\pi$ and $H(f)$.
The weak equivalences in this zig-zag turn out to be 
orientation-preserving, which follows 
by the definition of our standard orientations in
Example~\ref{ex:orient}.
It follows that $[\pi]=[H(f)]$.    
\end{proof}

\begin{remark}[Nontriviality of $\eta$, $\nu$, and $\sigma$]
It is worth pointing out that if our base $k$ is a
field of characteristic not equal to $2$ then none of $\eta$, $\nu$, and $\sigma$ are
equal to the zero element.
Here it is useful to work unstably: since
the splitting $\chi$ exists after one suspension, $\eta$, $\nu$, and
$\sigma$ can be modelled by unstable maps $S^{3,2}\ra S^{2,1}$,
$S^{7,4}\ra S^{4,2}$, and $S^{15,8}\ra S^{8,4}$.  A completely routine
modification of the standard argument from \cite[Lemma 1.5.3]{SE}
shows that the homotopy cofibers of these maps have nontrivial cup products in
their mod $2$ motivic cohomology---more precisely, the maps have Hopf
invariant one in the usual sense that the square of the generator in
bidegree $(2n,n)$ equals the generator in  dimension $(4n,2n)$, for
$n=1,2,4$ in the three respective cases.   Properties of the motivic
Steenrod squares then show the existence of the expected Steenrod
operations in the cohomology of the cofibers, which proves that the
maps are not stably trivial.

The assumption that the base is a field not of characteristic $2$ is
because it is in that setting that we know the necessary 
results about the Steenrod operations in motivic cohomology.
It of course follows that $\eta$, $\nu$, and $\sigma$ are non-zero over $\Z$ as well.
One can presumably use motivic $\F_3$-cohomology to detect $\nu$ and $\sigma$
over fields of characteristic $2$.
We do not know whether $\eta$ is non-zero over fields of characteristic $2$.

When the base is a field of characteristic zero, another approach is
to reduce to the case $k\inc \C$ and then 
apply the topological realization from motivic homotopy theory to
classical homotopy theory.  The motivic elements $\eta$, $\nu$, and $\sigma$
all map to elements of Hopf invariant one.
\end{remark}


\section{The null-Hopf relation}

The goal of this section is to prove with geometric arguments
that $\eta \nu$ and $\nu \sigma$ are both zero.  The proofs for these
two results follow essentially the same pattern, but in the case of
$\nu\sigma=0$ one part of the argument develops some complications
that require a non-obvious workaround.  Our approach in this section will be
to first concentrate on the $\eta\nu=0$ proof, so that  the reader can
see the basic strategy of what is happening.  Then we repeat most of
the steps for the case of the $\nu\sigma=0$ argument, explaining what
the differences are.

We begin our work by returning to Cayley-Dickson algebras:

\begin{lemma}
\label{lem:t-endo}
Let $A$ be an associative involutive $k$-algebra,
let $\gamma$ be in $k^\times$, and let $t$ be an element
of $A$ having norm $1$.
The map $\theta_t\colon D_\gamma(A)\ra D_\gamma(A)$ given by
$\theta_t(a,b)=(a,t b)$
is an involution-preserving endomorphism of the Cayley-Dickson double
$D_\gamma(A)$.  
In particular, $\theta_t$ is norm-preserving.  
\end{lemma}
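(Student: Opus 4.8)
The plan is to check directly that $\theta_t$ is a unital $k$-algebra endomorphism compatible with the involutions; norm-preservation then drops out formally. The $k$-linearity of $\theta_t$ is clear because left multiplication by $t$ is $k$-linear on $A$, and $\theta_t$ is unital since $\theta_t(1_A,0)=(1_A,0)$, which is the unit of $D_\gamma(A)$. So the substance is multiplicativity and involution-compatibility.

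For multiplicativity I would expand both sides of $\theta_t\bigl((a,b)(c,d)\bigr)=\theta_t(a,b)\cdot\theta_t(c,d)$ using the Cayley--Dickson multiplication formula. The left-hand side is $\bigl(ac-\gamma d^*b,\ t(da+bc^*)\bigr)$, while the right-hand side is $\bigl(ac-\gamma (td)^*(tb),\ (td)a+(tb)c^*\bigr)$. Matching first coordinates reduces to the identity $(td)^*(tb)=d^*b$: since $(\blank)^*$ is an anti-automorphism we have $(td)^*=d^*t^*$, and since $n(t)=1$ we have $t^*t=n(t)\cdot 1_A=1_A$, so $(td)^*(tb)=d^*(t^*t)b=d^*b$, using associativity of $A$. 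Matching second coordinates reduces to $(td)a+(tb)c^*=t(da+bc^*)$, which is again associativity in $A$ together with linearity of left multiplication by $t$. The one point worth flagging is that $D_\gamma(A)$ is in general non-associative, so associativity may only be used inside $A$ --- which is exactly what the hypothesis provides, and this is the reason associativity of $A$ appears in the statement.

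Involution-compatibility is a one-line check: $\theta_t\bigl((a,b)^*\bigr)=\theta_t(a^*,-b)=(a^*,-tb)$, while $\bigl(\theta_t(a,b)\bigr)^*=(a,tb)^*=(a^*,-tb)$. Finally, for norm-preservation, for any $x\in D_\gamma(A)$ one has $\theta_t(x)\cdot\theta_t(x)^*=\theta_t(xx^*)=\theta_t\bigl(n(x)\cdot 1\bigr)=n(x)\cdot\theta_t(1)=n(x)\cdot 1$, so $n(\theta_t(x))=n(x)$ by definition of the norm. The argument is purely computational; there is no real obstacle, only the bookkeeping of tracking where associativity of $A$ and the normalization $n(t)=1$ are used.
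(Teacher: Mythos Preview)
Your proof is correct and follows exactly the approach the paper takes: the paper simply says to verify directly from the Cayley--Dickson formulas that $\theta_t$ is an involution-preserving endomorphism, and then observes that norm-preservation follows from $n(x)=xx^*$. You have carried out that verification in full detail, correctly isolating where associativity of $A$ and the condition $n(t)=1$ are used.
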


\begin{proof}
Verify that $\theta_t$ is an involution-preserving endomorphism
directly with the formulas for $D_\gamma(A)$ given at the beginning
of Section \ref{subsctn:CD-fundamentals}.
Since $n(x)=xx^*$, it follows that $\theta_t$ also preserves the norm.
\end{proof}

From Lemma \ref{lem:t-endo},
we find that there is a pairing 
\[ \theta\colon S(A)\times S(D_\gamma A)\ra S(D_\gamma A)
\]
given by $\theta(t,x)=\theta_t(x)$.  
In particular, this yields maps 
\[ \alpha\colon S_\C \times S_\HH \map S_\HH  \quad\text{and}\quad
 \beta\colon S_\HH \times S_\OO \map S_\OO.
\]
Note that these maps commute with multiplication in the sense that
\[
\alpha(t,ab)  = \alpha(t,a) \alpha(t,b) \qquad\text{and}\qquad
\beta(a,xy)  = \beta(a,x) \beta(a,y).
\]
In other words, the diagram
\begin{myequation}
\label{eq:endo-mult}
\xymatrix@C+1.5ex{
& S_\C \times S_\HH \times S_\HH 
  \ar[r]^-{\Delta \times 1 \times 1} \ar[dd]_{1 \times \mu} &
S_\C \times S_\C \times S_\HH \times S_\HH
  \ar[r]^{1 \times T \times 1} &
S_\C \times S_\HH \times S_\C \times S_\HH
  \ar[d]^{\alpha \times \alpha} \\
& & & S_\HH \times S_\HH \ar[d]^\mu \\
& S_\C \times S_\HH \ar[rr]_\alpha & & 
S_\HH, 
}
\end{myequation}
commutes, where $\Delta$ and $T$ are the evident diagonal and twist maps.
A similar diagram commutes for $S_\HH$, $S_\OO$, and $\beta$.

\begin{lemma}
\label{lem:alpha-eta}
The Hopf construction on $\alpha$ represents $\eta$.
\end{lemma}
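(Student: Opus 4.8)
The plan is to unwind the definition of $\alpha$ in the split coordinates of Section~\ref{subsctn:CD-fundamentals}, reduce it via naturality of the Hopf construction to a completely transparent model, and then recognize that model as (essentially) the multiplication pairing on $S_\C$ that defines $\eta$.

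First I would recall that $S_\HH = S(A_\HH)$ sits inside $A_\HH = A_\C \oplus A_\C$ with norm $n(a,b) = n(a)+n(b)$, and that $\theta_t(a,b) = (a,tb)$; thus $\alpha\colon S_\C \times S_\HH \to S_\HH$ fixes the ``$a$''-coordinate and multiplies the ``$b$''-coordinate by $t$. Composing $\alpha$ with the weak equivalence $\psi\colon S_\HH \xrightarrow{\simeq}(\A^2-0)$, $(a_1,a_2,b_1,b_2)\mapsto(a_1,b_1)$, that is used to orient $S_\HH$, one checks directly that $\psi\circ\alpha$ depends only on $t$ and on $\psi$ of the second variable --- the point being the identity $t_1b_1\cdot t_2b_2 = b_1b_2$ coming from $t\in S_\C$. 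Hence $\psi\circ\alpha$ factors as $S_\C\times S_\HH \xrightarrow{1\times\psi} S_\C\times(\A^2-0)\xrightarrow{\bar\alpha}(\A^2-0)$ with $\bar\alpha(t;c,d) = (c,t_1d)$, and composing further with the orientation $S_\C\xrightarrow{\simeq}(\A^1-0)$, $(t_1,t_2)\mapsto t_1$, one arrives at $\tilde\alpha\colon(\A^1-0)\times(\A^2-0)\to(\A^2-0)$, $\tilde\alpha(u;c,d) = (c,ud)$. All the vertical maps here are weak equivalences between oriented homotopy spheres, and one must check they are orientation-preserving using the conventions of Example~\ref{ex:orient} and Remark~\ref{re:orient-const}; granting this, naturality of the Hopf construction (Appendix~\ref{se:Hopf}) gives $[H(\alpha)] = [H(\bar\alpha)] = [H(\tilde\alpha)]$.

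It then remains to identify $[H(\tilde\alpha)]$ with $\eta$. The key observation is that in the join model $(\A^2-0)\simeq(\A^1-0)*(\A^1-0)$ of Example~\ref{ex:orient}(2) --- equivalently, in the join decomposition $S_\HH\simeq S_\C*S_\C$ into the ``$a$-sphere'' $\{(a,0)\}$ and the ``$b$-sphere'' $\{(0,b)\}$ --- the pairing $\tilde\alpha$ restricts to the trivial (projection) pairing on the first factor and to the multiplication $\mu\colon(\A^1-0)\times(\A^1-0)\to(\A^1-0)$ on the second, and is the evident ``diagonal'' pairing on the join. I would invoke the Appendix~C machinery for Hopf constructions of such built-up pairings (the melding formula of Proposition~\ref{pr:H(meld)}, or a simpler join statement preceding it) to conclude that $H(\tilde\alpha)$ agrees, up to the canonical identifications of the relevant spheres, with $\mathrm{id}_{(\A^1-0)}*H(\mu)$. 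Since $H(\mu)$ represents $\eta$ by definition, and since the join orientations of Remark~\ref{re:orient-const}(3) translate joining with $\mathrm{id}_{(\A^1-0)}$ into smashing with $\mathrm{id}_{S^{1,1}}$ up to canonical isomorphism --- which changes nothing by Remark~\ref{re:invcoh}(i) --- this yields $[H(\tilde\alpha)] = \eta$, hence $[H(\alpha)] = \eta$.

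The main obstacle is the third step: expressing the Hopf construction of the ``diagonal join action'' $\tilde\alpha$ cleanly in terms of $H(\mu)$. This is exactly the sort of bookkeeping the melding formula is designed to handle, but care is required because joins, suspensions, and the canonical isomorphisms all introduce potential signs. The orientation-preservation checks in the second step are routine but cannot be skipped, since mixing up the two cones of a suspension would cost a factor of $-\epsilon$; reassuringly, nothing worse than that can go wrong, and conceptually there is no content here beyond the defining property of $\eta$.
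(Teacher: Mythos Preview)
Your approach is essentially identical to the paper's: reduce $\alpha$ via the orientation equivalences to $\tilde\alpha(u;c,d)=(c,ud)$ on $(\A^1-0)\times(\A^2-0)$, recognize this as the melding $\pi_2\#\mu$ in the join decomposition $(\A^2-0)\simeq(\A^1-0)*(\A^1-0)$, and apply Proposition~\ref{pr:H(meld)}. Your first two paragraphs match the paper's argument closely (the remark about $t_1b_1\cdot t_2b_2=b_1b_2$ is a red herring --- the factorization through $\psi$ is immediate from coordinates --- but harmless).

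There is one genuine slip in the final step. You claim the melding machinery yields $H(\tilde\alpha)\simeq\id_{(\A^1-0)}*H(\mu)$, hence $[H(\tilde\alpha)]=\eta$ directly via Remark~\ref{re:invcoh}(i). But actually carrying out Proposition~\ref{pr:H(meld)} with $f=\pi_2$ and $g=\mu$ (so $H(\pi_2)=0$ by Lemma~\ref{lem:Hopf-trivial} and $\pi_2^*=\id$) gives
\[
[H(\pi_2\#\mu)]=\tau_{X,Y_1}\,\tau_{Y_1-Z_1,Z_2}\,[H(\mu)]=\tau_{(1,1),(1,1)}\cdot\eta=\epsilon\eta,
\]
not $\eta$. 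Your heuristic ``joining with the identity becomes smashing on the left'' hides a twist: the domain of $H(\tilde\alpha)$ is $X*(Y_1*Y_2)$, whereas $\id_{Y_1}*H(\mu)$ naturally has domain $Y_1*(X*Y_2)$, and interchanging $X$ with $Y_1$ costs exactly $\tau_{X,Y_1}=\epsilon$. The paper finishes by invoking Lemma~\ref{lem:epsilon-eta} ($\epsilon\eta=\eta$), which you never use. So you were right to worry about stray $\epsilon$'s in your last paragraph --- one really does appear, and Morel's relation is what kills it.
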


\begin{proof}
Recall the orientation-preserving 
weak equivalence $\pi\colon S_\HH \map (\A^2-0)$ that takes 
$(a_1,a_2,b_1,b_2)$ to $(a_1,b_1)$, as well as the isomorphism $p\colon S_\C\ra
(\A^1-0)$ that sends $(t_1,t_2)$ to $t_1$.  We have a commutative diagram
\[
\xymatrix{
S_\C \Smash S_\HH \ar[r]^\chi \ar[d]_{\simeq} &
S_\C \times S_\HH \ar[d]_{\simeq} \ar[r]^\alpha & S_\HH \ar[d]^{\simeq} \\
(\A^1-0) \Smash (\A^2-0) \ar[r]_\chi &
(\A^1-0) \times (\A^2-0) \ar[r]_-{\alpha'} & \A^2-0,
}
\]
where $\alpha'\colon (\A^1-0) \times (\A^2-0) \map (\A^2-0)$ is given by
$(t,(x,y)) \mapsto (x,ty)$.  The diagram shows that 
the Hopf constructions $H(\alpha)$ and $H(\alpha')$ 
represent the same map in $\pi_{*,*}(S)$, so we will now
focus on the latter.

Recall that we have fixed an isomorphism (in the homotopy category) 
between $(\A^2-0)$ and the join $(\A^1-0)*(\A^1-0)$.  Under this
isomorphism, $\alpha'$ coincides with the melding $\pi_2 \# \mu$, 
where $\pi_2$ and $\mu$ are the projection
 and multiplication maps $(\A^1-0)\times (\A^1-0)\ra (\A^1-0)$.
See Appendix~\ref{se:meld} for
the definition of $\pi_2\# \mu$.  

Proposition~\ref{pr:H(meld)} gives a formula for $H(\pi_2\#\mu)$.  But
$H(\pi_2)$ is null by Lemma~\ref{lem:Hopf-trivial}, and so that formula simplifies
to just
\[ [H(\alpha')]=[H(\pi_2\#\mu)]= \tau_{(1,1),(1,1)}\cdot
[H(\mu)]=\epsilon [H(\mu)]=\epsilon \eta=\eta.
\]
The element $\tau_{(1,1),(1,1)}$ is computed by Equation (\ref{eq:tau}), and the
last equality is by Lemma~\ref{lem:epsilon-eta}.
\end{proof}

The next result is the desired null-Hopf relation.

\begin{prop}
\label{prop:eta-nu}
$\eta \nu = 0$.
\end{prop}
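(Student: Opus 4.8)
The plan is to exploit the endomorphism pairing $\alpha\colon S_\C \times S_\HH \to S_\HH$ from Lemma~\ref{lem:t-endo}, whose Hopf construction represents $\eta$ (Lemma~\ref{lem:alpha-eta}) and which intertwines with the quaternionic multiplication $\mu\colon S_\HH \times S_\HH \to S_\HH$ via diagram~(\ref{eq:endo-mult}). The key point is that $\nu$ is the Hopf construction of $\mu$, and the product $\eta\nu$ in $\pi_{*,*}(S)$ should be realized geometrically as a Hopf-type construction built from the ``meld'' or composite of $\alpha$ and $\mu$. Concretely, I would first write down a map $S_\C \times S_\HH \times S_\HH \to S_\HH$ obtained by composing the diagonal-and-twist with $\alpha \times \alpha$ followed by $\mu$ (that is, the long way around diagram~(\ref{eq:endo-mult})), which by commutativity of that diagram equals $\alpha \circ (1 \times \mu)$. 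Applying the Hopf construction and using the melding formula of Proposition~\ref{pr:H(meld)} together with the multiplicativity of the Hopf construction under composition should express $[\eta\nu]$ (up to a unit in $\pi_{0,0}(S)$, i.e.\ a power of $\epsilon$) as the Hopf construction of $\alpha\circ(1\times\mu)$.

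Next I would analyze $\alpha\circ(1\times\mu)$ directly. By diagram~(\ref{eq:endo-mult}), this factors through $(\Delta\times 1\times 1)$ and then $\alpha\times\alpha$ and $\mu$ — but the crucial structural feature to exploit is that $\alpha$ only modifies the ``$b$''-coordinates: $\alpha(t,(a,b)) = (a, tb)$ in the Cayley-Dickson double description, so that the first half of the output of $\alpha(t,x)$ is independent of $t$. Combined with Remark~\ref{re:splitting-H} (the non-mixing property that the first and last coordinates of a quaternionic product depend only on the corresponding coordinates of the second factor), this should let me show that the relevant composite map, when passed through a suitable model, factors through a projection in one of its sphere coordinates. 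A map that factors through a projection $S_\C \times S_\HH \to S_\HH$ (forgetting the $S_\C$) has null Hopf construction by Lemma~\ref{lem:Hopf-trivial}, exactly as $H(\pi_2)$ was null in the proof of Lemma~\ref{lem:alpha-eta}.

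The main obstacle will be the bookkeeping in translating the algebraic identity $\alpha(t,xy) = \alpha(t,x)\alpha(t,y)$ into a statement about Hopf constructions that actually forces nullity rather than merely rewriting $\eta\nu$ as another product. The subtlety is that the Hopf construction of a composite or meld is not simply a product of Hopf constructions; Proposition~\ref{pr:H(meld)} gives a corrective formula involving the $\tau$ signs, and one must be careful that the ``inner'' map being melded or composed genuinely has trivial Hopf construction in the appropriate range. I expect the argument to proceed by: (1) identifying $\eta\nu$ with $\pm\epsilon^? [H(\alpha \circ (1\times\mu))]$ via the melding/composition formulas; (2) using the splitting of $A_\HH$ (Remark~\ref{re:splitting-H}) to rewrite $\alpha\circ(1\times\mu)$ as a map that, in one of its two sphere variables, is a projection — the non-mixing property is what makes the $S_\C$-variable ``drop out'' of a coordinate; (3) invoking Lemma~\ref{lem:Hopf-trivial} to conclude the Hopf construction vanishes; and (4) checking that the sign/unit factors from step (1) do not obstruct the conclusion, so $\eta\nu = 0$. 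The geometric heart — and the step most likely to require a non-obvious choice of model or coordinates — is step (2), making precise which projection the composite factors through.
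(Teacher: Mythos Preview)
Your setup is right: one does precompose both routes of diagram~(\ref{eq:endo-mult}) with $\chi\colon S_\C\Smash S_\HH\Smash S_\HH\to S_\C\times S_\HH\times S_\HH$, and the lower-left route gives exactly $\eta\nu$ (not just up to a unit). But your steps (2)--(3) are the wrong endgame. The composite $\alpha\circ(1\times\mu)$ does \emph{not} factor through a projection that kills the $S_\C$-variable; the second $A_\C$-coordinate of $\alpha(t,xy)$ genuinely depends on $t$, so Lemma~\ref{lem:Hopf-trivial} does not apply. Moreover, Remark~\ref{re:splitting-H} (the non-mixing property of $A_\HH$) is not used at all in the $\eta\nu=0$ argument---it appears only in the proof of $\nu\sigma=0$, specifically in Lemma~\ref{lem:beta-nu}, where it serves to identify $H(\beta)$ with $-\nu$, not to produce a null composite.

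What you are missing is that the vanishing comes from analyzing the \emph{top-right} route and ultimately from Morel's relation $\eta^2\rho+2\eta=0$ (Theorem~\ref{th:Morel}(iii)). Decompose $\id_{S_\HH\times S_\HH}=\chi p+j_1\pi_1+j_2\pi_2$; the $\pi_i$ pieces are null after precomposition with $\chi$ (here is where projection-factoring is used, but on the $S_\HH$-factors, not on $S_\C$). The surviving piece is $H(\mu)\circ H(\alpha\#\alpha)$, where $\alpha\#\alpha$ is the melding of $\alpha$ with itself. Now Proposition~\ref{pr:H(meld)} gives
\[
[H(\alpha\#\alpha)]=2\tau_{(1,1),(3,2)}\eta+\tau_{(1,1),(3,2)}^2\,\eta^2[\Delta_{S^{1,1}}]=2\eta+\eta^2\rho,
\]
using $\tau_{(1,1),(3,2)}=1$ and Theorem~\ref{th:diagonal}. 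This is zero by Morel. So the crucial input is $[\Delta_{1,1}]=\rho$ together with $2\eta+\eta^2\rho=0$; no projection argument of the kind you describe will substitute for it.
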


\begin{proof}
We will
examine what happens when both routes around Diagram 
(\ref{eq:endo-mult}) are
precomposed with the splitting map
$\chi \colon S_\C \Smash S_\HH \Smash S_\HH \map 
 S_\C \times S_\HH \times S_\HH$.  Note that throughout this proof we
work in the stable category.

We will begin with the lower-left composition.
To analyze $\alpha(1\times \mu)\chi$, use the
commutative diagram
\[
\xymatrix{
S_\C \Smash S_\HH \Smash S_\HH \ar[d]_{1 \Smash \chi} \ar[dr]^{1 \Smash H(\mu)} \\
S_\C \Smash (S_\HH \times S_\HH) \ar[d]_\chi \ar[r]_-{1 \Smash \mu} &
S_\C \Smash S_\HH \ar[d]^{\chi} \ar[dr]^{H(\alpha)} \\
S_\C \times S_\HH \times S_\HH \ar[r]_-{1 \times \mu} &
S_\C \times S_\HH \ar[r]_{\alpha} &
S_\HH.
}
\]
The square commutes because $\chi$ is natural, and 
the two triangles commute by definition of the Hopf construction.  The left vertical
composite equals $\chi$ by Remark~\ref{re:chi-p}.
Recall from Lemma~\ref{lem:alpha-eta} that $H(\alpha)=\eta$, and of
course $H(\mu)=\nu$ by definition.   So we have that
\[ [\alpha(1\times \mu)\chi]=[H(\alpha)]\cdot [1\Smash
H(\mu)]=[H(\alpha)]\cdot [H(\mu)]=\eta\nu.
\]
The second equality uses Remark~\ref{re:invcoh}(i).

Next we analyze what happens when we compose 
$\chi \colon S_\C \Smash S_\HH \Smash S_\HH \map S_\C \times S_\HH \times S_\HH$
with the top-right part of Diagram (\ref{eq:endo-mult}).
 We will obtain zero,
which will finish the proof.
This is mostly an application of Proposition~\ref{pr:H(meld)},
where the maps $f \colon S_\C \times S_\HH \map S_\HH$ and
$g \colon S_\C \times S_\HH \map S_\HH$ are both equal to $\alpha$.

First recall from Corollary~\ref{co:stable-split}(b) that the identity
map on $S_\HH\times S_\HH$ can be written as
$\id_{S_\HH\times S_\HH} = \chi p + j_1\pi_1 + j_2\pi_2$
where $\pi_1$ and $\pi_2$ are the two projections $S_\HH\times
S_\HH\ra S_\HH$; $j_1,j_2\colon S_\HH \ra S_\HH\times S_\HH$ are
the two inclusions as horizontal and vertical slices; and $p$ is the
projection from the product to the smash product.  The composite
of interest can therefore be written as a sum of three composites of the
form
\[ S_\C \Smash S_\HH \Smash S_\HH \llra{\chi}
S_\C \times S_\HH \times S_\HH \llra{h} S_\HH\times S_\HH \llra{u}
S_\HH 
\]
where $h$ denotes the 
composition $S_\C \times S_\HH \times S_\HH \ra S_\HH \times S_\HH$
along the top-right part of Diagram (\ref{eq:endo-mult}),
and $u$ is one of
$\mu\chi p$, $\mu j_1\pi_1=\pi_1$, and $\mu j_2\pi_2=\pi_2$.  
But in the latter two cases the
composites are clearly null; in the case of $\pi_1$, for example,
this follows from the diagram
\[ \xymatrix{
S_\C\Smash S_\HH \Smash S_\HH \ar@{.>}[dr]\ar[r]^-\chi & S_\C\times S_\HH\times
S_\HH \ar[r]^-{\pi_1 h} \ar[d] & S_\HH \\
& S_\C\times S_\HH\times * \ar[ur]_\alpha
}
\]
and the fact that the dotted composite is null by the defining
properties of $\chi$ (Proposition~\ref{pr:chi_stable}).

So it remains to analyze the composite
\[ S_\C \Smash S_\HH \Smash S_\HH \llra{\chi}
S_\C \times S_\HH \times S_\HH \llra{h} S_\HH\times S_\HH \llra{p}
S_\HH\Smash S_\HH \llra{\chi} S_\HH\Smash S_\HH \llra{\mu} S_\HH.
\]
This is equal to $H(\mu)\circ H(\alpha\#\alpha)$, using 
Lemma~\ref{le:meld1} for the second factor.  
Proposition~\ref{pr:H(meld)}  says that
$[H(\alpha\#\alpha)]$ equals
\[
\tau_{(1,1),(3,2)} [H(\alpha)] + \tau_{(1,1),(3,2)}[H(\alpha)] +
(\tau_{(1,1),(3,2)})^2 [H(\alpha)]\cdot [H(\alpha)]\cdot
[\Delta_{S^{1,1}}].
\]
Now use  $[\Delta_{S^{1,1}}]=\rho$ from
Theorem~\ref{th:diagonal}; $[H(\alpha)]=\eta$ from 
Lemma~\ref{lem:alpha-eta};
and $\tau_{(1,1),(3,2)}=1$ by Equation (\ref{eq:tau}).  We obtain
$H(\alpha\# \alpha)= 2\eta+\eta^2\rho$,
which equals zero by
Theorem~\ref{th:Morel}(ii).
\end{proof}

We next duplicate the above arguments to prove the analogous Hopf
relation $\nu\sigma=0$, using the pairing $\beta\colon S_\HH\times
S_\OO\ra S_\OO$.  This time we go through the steps in reverse
order, saving what is now the hardest step for last.

\begin{prop}
\label{prop:nu-sigma}
$\nu \sigma = 0$.
\end{prop}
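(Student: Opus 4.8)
The plan is to mimic the proof of Proposition~\ref{prop:eta-nu} almost verbatim, replacing the pairing $\alpha\colon S_\C\times S_\HH\to S_\HH$ by $\beta\colon S_\HH\times S_\OO\to S_\OO$ and the relevant sphere dimensions accordingly. As noted in the paper, though, we will run the steps in reverse order. First I would record a lemma analogous to Lemma~\ref{lem:alpha-eta}: the Hopf construction on $\beta$ represents $\nu$. The argument should go through a melding description —using the orientation $S_\OO\simeq (\A^4-0)\simeq S_\HH*(\A^2-0)$ or, better, $S_\OO \simeq S_\HH * S_\HH'$ for an appropriate copy— so that $\beta$ (transported to these coordinates) becomes a melding of a projection with the multiplication $\mu\colon S_\HH\times S_\HH\to S_\HH$. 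Then Proposition~\ref{pr:H(meld)} together with $H(\text{projection})$ being null (Lemma~\ref{lem:Hopf-trivial}) collapses the formula to $[H(\beta)] = \tau_{?}\cdot[H(\mu)] = \text{(unit)}\cdot \nu$, and one checks via Equation~(\ref{eq:tau}) and, if needed, Corollary~\ref{cor:epsilon-nu}, that the unit factor drops out to give exactly $\nu$. This is the step I expect to require the ``non-obvious workaround'' the authors warned about, since the splitting $S_\OO \simeq S_\HH * (\text{something})$ does not respect the $\beta$-pairing as cleanly as in the $S_\C$, $S_\HH$ case — the map $\omega$ of Remark~\ref{re:splitting-H} and the ``non-mixing'' propagation into $A_\HH$ is presumably what rescues the computation, and getting the melding description honest is the crux.

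With that lemma in hand, the rest parallels Proposition~\ref{prop:eta-nu} closely. Consider the analogue of Diagram~(\ref{eq:endo-mult}) for $S_\HH$, $S_\OO$, $\beta$, which commutes because $\beta(a,xy)=\beta(a,x)\beta(a,y)$. Precompose both routes around this square with the splitting $\chi\colon S_\HH\Smash S_\OO\Smash S_\OO \to S_\HH\times S_\OO\times S_\OO$, working throughout in the stable category. The lower-left route, by the same diagram chase using naturality of $\chi$, Remark~\ref{re:chi-p}, the identification $H(\beta)=\nu$, and $H(\mu_{S_\OO})=\sigma$ by definition, represents $[H(\beta)]\cdot[H(\mu_{S_\OO})] = \nu\sigma$ (using Remark~\ref{re:invcoh}(i) for the bookkeeping). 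So it suffices to show the top-right route becomes null after precomposition with $\chi$.

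For the top-right route, use Corollary~\ref{co:stable-split}(b) to write $\id_{S_\OO\times S_\OO} = \chi p + j_1\pi_1 + j_2\pi_2$ and break the composite into three pieces. The two pieces through $j_1\pi_1$ and $j_2\pi_2$ are null by the defining properties of $\chi$ (Proposition~\ref{pr:chi_stable}), exactly as in the $\eta\nu$ proof — one factors through $S_\HH\times S_\OO\times *$ or $S_\HH\times *\times S_\OO$ and uses that $\chi$ composed with a collapse is null. The remaining piece is $H(\mu_{S_\OO})\circ H(\beta\#\beta)$ by Lemma~\ref{le:meld1}, and Proposition~\ref{pr:H(meld)} gives
\[
[H(\beta\#\beta)] = \tau_{(3,2),(7,4)}[H(\beta)] + \tau_{(3,2),(7,4)}[H(\beta)] + \bigl(\tau_{(3,2),(7,4)}\bigr)^2\,[H(\beta)]\cdot[H(\beta)]\cdot[\Delta_{S^{3,2}}].
\]
Now substitute $[H(\beta)]=\nu$, $[\Delta_{S^{3,2}}]=0$ from Theorem~\ref{th:diagonal} (since $3>2$), and whatever value $\tau_{(3,2),(7,4)}$ takes from Equation~(\ref{eq:tau}). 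The last term vanishes outright because the diagonal on $S^{3,2}$ is null — this is precisely where part (b) of Theorem~\ref{th:diag-power} is used, in contrast to the $\eta\nu$ case where it was part (a) with $p=q$. We are left with a scalar multiple of $\nu$ of the form $(\text{unit}+\text{unit})\nu$; to conclude this is zero one invokes $\eta\nu=0$ from Proposition~\ref{prop:eta-nu} — the coefficient $1+\epsilon$ (or $2$, depending on how the signs in Equation~(\ref{eq:tau}) shake out) times $\nu$ is zero by Corollary~\ref{cor:epsilon-nu}, or directly because $2\nu = (1+\epsilon)\nu + (1-\epsilon)\nu$ and both summands are forced to vanish. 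Chasing these $\tau$-signs correctly, and confirming that the melding lemma for $\beta$ really does hold, are the only genuine obstacles; everything else is a transcription of the $\eta\nu$ argument.
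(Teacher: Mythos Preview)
Your overall strategy matches the paper's, but there is a genuine gap at the endgame. You plan to show that the top-right composite is \emph{null} by proving $[H(\beta\#\beta)]=0$, just as $[H(\alpha\#\alpha)]=2\eta+\eta^2\rho=0$ in the $\eta\nu$ argument. This fails here. From Equation~(\ref{eq:tau}) one has $\tau_{(3,2),(7,4)}=(-1)^{1\cdot 3}\epsilon^{2\cdot 4}=-1$, so after the diagonal term drops out you are left with $[H(\beta\#\beta)]=-2[H(\beta)]=\pm 2\nu$, and $2\nu$ is \emph{not} zero (classically $\nu$ has order $24$). Your suggested rescue via Corollary~\ref{cor:epsilon-nu} does not work: the coefficient is $-2$, not $1+\epsilon$, and your decomposition $2\nu=(1+\epsilon)\nu+(1-\epsilon)\nu$ reads $2\nu=0+2\nu$, which proves nothing.

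The paper does \emph{not} show the top-right composite is zero. Instead it equates the two routes to obtain
\[
[H(\beta)]\cdot\sigma \;=\; \sigma\cdot\bigl(-2[H(\beta)]\bigr),
\]
and then applies graded-commutativity (Proposition~\ref{pr:commute}) with $\tau_{(7,4),(3,2)}=-1$ to rewrite the right-hand side as $2[H(\beta)]\cdot\sigma$. This yields $[H(\beta)]\cdot\sigma=2[H(\beta)]\cdot\sigma$, whence $[H(\beta)]\cdot\sigma=0$. Only then does one invoke $[H(\beta)]=-\nu$ (Lemma~\ref{lem:beta-nu}; note the sign is $-\nu$, not $\nu$, since $\tau_{(3,2),(3,2)}=-1$) to conclude $\nu\sigma=0$. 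The key conceptual difference from the $\eta\nu$ case is exactly this: there the meld-Hopf was zero on the nose, here it is not, and the relation comes instead from comparing the two sides.
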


\begin{proof}
The proof is very similar to the proof of
Proposition~\ref{prop:eta-nu}.  One starts with the 
commutative diagram analogous to Diagram (\ref{eq:endo-mult})
showing that $\beta$ respects multiplication, and
then precomposes the two routes around the diagram with $\chi$.  
By exactly the same
arguments as before, the composition along the bottom-left part of the
diagram gives $[H(\beta)]\cdot \sigma$, and
composition along the top-right part of the diagram gives
\[ \sigma \cdot \Bigl [ \tau_{(3,2),(7,4)}[H(\beta)]+\tau_{(3,2),(7,4)}[H(\beta)] +
\bigl ( \tau_{(3,2),(7,4)} \bigr )^2 [\Delta_{S^{3,2}}] \Bigr ]
\]
(using Proposition~\ref{pr:H(meld)}).  But here the diagonal map is equal to
zero by Theorem~\ref{th:diagonal}, because $S^{3,2}$ is a simplicial
suspension.  Using that
$\tau_{(3,2),(7,4)} = -1$ by Equation $(\ref{eq:tau})$,
our formula becomes
\[
[H(\beta)]\cdot \sigma=\sigma\cdot [-2H(\beta)] = 2[H(\beta)]\cdot \sigma,
\]
We have used graded-commutativity from
Proposition~\ref{pr:commute} in the second equality.
This shows that
$[H(\beta)]\cdot \sigma=0$.  Finally, use that $[H(\beta)]=-\nu$ by
Lemma~\ref{lem:beta-nu}
below.
\end{proof}

Our next goal is to compute the Hopf construction $H(\beta)$.
Recall that the pairing $\beta\colon S_\HH\times
S_\OO\ra S_\OO$ sends $[t,(x,y)]\mapsto (x,ty)$.  The idea is to realize
$S_\OO$ as the join of two copies of $S^{3,2}$, corresponding to the
two coordinates $x$ and $y$.  Under this equivalence, $\beta$ becomes
the melding $\pi_2 \# \mu$ (Section~\ref{se:meld}),
where $\pi_2$ and $\mu$ are the projection and multiplication maps
$S_\HH\times S_\HH\ra S_\HH$.  
Proposition~\ref{pr:H(meld)} then shows
that $[H(\beta)]=\tau_{(3,2),(3,2)}\nu=-\nu$.  

The difficulty comes in realizing $S_\OO$ as a join, in a way that is
compatible with the $\beta$-action by $S_\HH$.  To understand the
problem, it is useful to review how this would work in classical
topology.
Let $S$ be the unit sphere inside the classical octonions $\OO$,
consisting of pairs $(x,y)\in \HH\times \HH$ such that
$|x|^2+|y|^2=1$.  Let $U_1\subseteq S$ be the set of pairs where
$x\neq 0$, and let $U_2\subseteq S$ be the set of pairs where $y\neq
0$.  There are evident projections $q_1\colon U_1 \ra S^3$ and
$q_2 \colon U_2\ra S^3$ given by $q_1(x,y)=\tfrac{x}{|x|}$ and
$q_2(x,y)=\tfrac{y}{|y|}$.  
The diagram
\[ \xymatrix{
U_1 \ar[d]_{q_1}& U_1\cap U_2 \ar[l]\ar[r]\ar[d]^{q_1\times q_2} & U_2\ar[d]^{q_2}
\\
S^3 & S^3\times S^3 \ar[l]_-{\pi_1}\ar[r]^-{\pi_2} & S^3
}
\] 
is commutative; the homotopy colimit of the top row is $S$, and the
homotopy colimit of the bottom row is the join $S^3*S^3$.  
All of the vertical maps are
homotopy equivalences.
Moreover, if we let $S(\HH)=S^3$ act on $S^3\times S^3$ trivially on
the first factor and by left multiplication on the second factor,
then the $S(\HH)$-actions on $S$ and
$S^3\times S^3$ are compatible with respect to the maps in  the above diagram.  
This identifies $S(\HH)\times S\ra S$ with the melding of the two evident
$S(\HH)$-actions on $S^3$.

Unfortunately, the above argument does not work in the motivic
setting.  We do not have square roots, so we cannot normalize
vectors; likewise, the homotopies that show the vertical maps in the
diagram to be equivalences all use square roots.  So the above simple
argument breaks down in several spots.  

We get around these difficulties by using a 
special property of the split quaternions $A_\HH$.  Basically, we use
the splitting to reduce the action to a different model of the same
sphere, where it is easier to see the melding.  

\begin{lemma}
\label{lem:beta-nu}
The Hopf construction on $\beta$ represents $-\nu$.
\end{lemma}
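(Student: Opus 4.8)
The plan is to mimic the computation of $H(\alpha)$ from Lemma~\ref{lem:alpha-eta}, but with the extra twist that we must produce a model of $S_\OO$ as a join of two copies of $S_\HH$ on which the $\beta$-action is visibly a melding. First I would recall the map $\omega\colon A_\HH\ra \A^2$ from Remark~\ref{re:splitting-H}, which records the first and last coordinates of an element of $A_\HH$, and note the commuting square there: the first and last coordinates of a product $uv$ in $A_\HH$ depend only on those of $v$. Using the isomorphism $A_\HH\iso M_2(k)$ of Remark~\ref{rem:SL2}, $\omega$ becomes ``read off a column vector'' and $\mu'$ becomes the usual action of $\SL_2$ on column vectors. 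The point of this observation is that it lets us replace the $A_\HH$-factors appearing in the definition of $S_\OO\subseteq A_\HH\oplus A_\HH$ by $\A^2$-factors in a way that is equivariant for the left $S_\HH$-action.

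Next I would set up the analogue of the classical affine-cover diagram over $\P^1\times\P^1$-type data, but motivically. Write $S_\OO$ as the closed subvariety of $A_\HH\oplus A_\HH$ cut out by $n(x)+n(y)=1$, with $\beta(t,(x,y))=(x,ty)$. Let $U_1$ be the open locus where $n(x)\neq 0$ (equivalently $x\in\SL_2$ after scaling, i.e.\ $x$ invertible in $A_\HH$), and $U_2$ the locus where $n(y)\neq 0$; these cover $S_\OO$. On $U_1$ there is a projection to $S_\HH$ given by a suitable normalization of $x$, and on $U_2$ a projection given by normalization of $y$; over $U_1\cap U_2$ one gets a map to $S_\HH\times S_\HH$. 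I would then argue, exactly as in the $\delta/R$ arguments of Section~3 and the standard ``fibers are affine spaces'' technique cited in Example~\ref{ex:orient}(3), that these projections are weak equivalences, so that the homotopy colimit of the top row is $S_\OO$ and that of the bottom row is the join $S_\HH*S_\HH$, with the stated orientations matching up. The left $S_\HH$-action, which only touches the $y$-coordinate, is trivial on the first $S_\HH$-factor of the join and is left multiplication on the second, so $\beta$ is identified with the melding $\pi_2\#\mu$ of the projection and multiplication maps $S_\HH\times S_\HH\ra S_\HH$.

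With that identification in hand, the computation is immediate: by Proposition~\ref{pr:H(meld)}, and since $H(\pi_2)$ is null by Lemma~\ref{lem:Hopf-trivial}, one gets
\[
[H(\beta)] = [H(\pi_2\#\mu)] = \tau_{(3,2),(3,2)}\cdot[H(\mu)] = \tau_{(3,2),(3,2)}\cdot\nu.
\]
Equation~(\ref{eq:tau}) gives $\tau_{(3,2),(3,2)} = (-1)^{(3-2)(3-2)}\epsilon^{2\cdot 2} = -\epsilon^4$, and $\epsilon^2 = 1$ since $\epsilon=\tau_{1,1}$ is the class of a twist (one can also see $\epsilon^2=1$ directly, as the square of the twist on $S^{1,1}\Smash S^{1,1}$ is the identity). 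Hence $[H(\beta)] = -\nu$, as claimed.

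I expect the main obstacle to be the second paragraph: carefully producing the open cover of $S_\OO$ together with the projections $U_i\ra S_\HH$, verifying that these are motivic weak equivalences without square roots (which is where the split structure and the Remark~\ref{re:splitting-H}/\ref{rem:SL2} reductions are essential), and checking that under the resulting identification $S_\OO\simeq S_\HH*S_\HH$ the $\beta$-action genuinely becomes $\pi_2\#\mu$ and the standard orientations are respected. In particular, one has to be attentive about which normalization of $x$ (resp.\ $y$) to use on $U_1$ (resp.\ $U_2$): the naive ``divide by the norm'' is not available, so one instead uses that on $U_1$ the element $x$ is invertible in $A_\HH$ and exploits $S_\HH\iso\SL_2$, passing through the $\A^2$-model via $\omega$ to keep the $S_\HH$-action manifestly a left multiplication on a single join factor. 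Once that geometric bookkeeping is done, the melding formula and the $\tau$-computation finish the argument with no further difficulty.
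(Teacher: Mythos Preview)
Your final computation via the melding formula and the $\tau$-sign is exactly what the paper does, and it is correct. The gap is precisely where you flagged it: the second paragraph does not go through. On $U_1=\{n(x)\neq 0\}\subseteq S_\OO$ the element $x$ is invertible in $A_\HH$, but invertibility alone does not give a map to $S_\HH$; to land in the unit sphere you would still need to divide by $\sqrt{n(x)}$, which is the square root you are trying to avoid. Saying ``pass through the $\A^2$-model via $\omega$'' does not fix this, because $\omega$ forgets two coordinates of $x$ and there is no way to reconstruct a point of $S_\HH$ from $\omega(x)$ alone. So the diagram $U_1\ra S_\HH$, $U_2\ra S_\HH$, $U_1\cap U_2\ra S_\HH\times S_\HH$ that you need never gets defined.

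The paper's workaround is to abandon the attempt to realize $S_\OO$ itself as $S_\HH*S_\HH$, and instead push the whole picture down to affine models. Define $\omega'\colon S_\OO\ra \A^4-0$ by $\omega'(x,y)=(\omega(x),\omega(y))$; this is a weak equivalence by the Zariski-trivial-bundle argument of Example~\ref{ex:orient}(3), and the commuting square of Remark~\ref{re:splitting-H} shows $\beta$ descends to a pairing $\beta'\colon S_\HH\times(\A^4-0)\ra\A^4-0$ given by $(t,(u,v))\mapsto(u,t*v)$. Now $\A^4-0$ is visibly $(\A^2-0)*(\A^2-0)$ via the standard cover, with no normalization needed, and under that identification $\beta'$ is the melding $\pi_2\#\mu'$ where $\mu'\colon S_\HH\times(\A^2-0)\ra\A^2-0$ is the action from Remark~\ref{re:splitting-H}. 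Your melding computation then gives $[H(\beta)]=[H(\beta')]=-[H(\mu')]$, and one last commuting square (again from Remark~\ref{re:splitting-H}, using that $\omega\colon S_\HH\ra\A^2-0$ is a weak equivalence intertwining $\mu$ and $\mu'$) identifies $[H(\mu')]$ with $[H(\mu)]=\nu$. The moral is that the join decomposition happens on the $\A^n-0$ side, not on the unit-sphere side.
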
 

\begin{proof}
Recall the pairing $\mu'\colon A_\HH \times \A^2 \ra \A^2$ given by
$(a_1,a_2,b_1,b_2)*(x,y)=(a_1x-yb_1,ya_2+b_2x)$
from Remark \ref{re:splitting-H},
as well as the commutative diagram
\[ \xymatrix{ A_\HH \times A_\HH \ar[r]^-\mu \ar[d]_{\id\times \omega} & A_\HH
\ar[d]^\omega \\
A_\HH\times \A^2 \ar[r]^-{\mu'} & \A^2.
}
\]
Note that $\mu'$ restricts to give $S_\HH\times (\A^2-0)\ra
\A^2-0$, and $\omega$ restricts to give an equivalence $S_\HH\ra \A^2-0$.  

Consider now the commutative diagram
\[ \xymatrix{ S_\HH\times S_\OO \ar[r]^-\beta \ar[d]^{\id\times \omega'} 
& S_\OO \ar[d]^{\omega'} \\
S_\HH \times (\A^4-0)\ar[r]^-{\beta'} & \A^4-0
}
\]
where $\omega'(x,y)=(\omega(x),\omega(y))$ and 
$\beta'(t,(u,v)) = (u,t*v)$ for $u$ and $v$ in $\A^2$.
The vertical maps are weak
equivalences by the argument from Example~\ref{ex:orient}(3).
So the Hopf constructions for $\beta$ and $\beta'$
represent the same element of $\pi_{*,*}(S)$.  

We know how to identify the variety $\A^4-0$ as the join
$(\A^2-0)*(\A^2-0)$ (Example~\ref{ex:orient}(2)).
Under this identification, the pairing
$\beta'$ is the melding $\pi_2 \# \mu'$
where $\pi_2$ and $\mu'$ are the projection and multiplication maps
$\S_\HH\times (\A^2-0)\ra (\A^2-0)$. 

Proposition~\ref{pr:H(meld)} now yields the formula 
\[ [H(\beta')]=[H(\pi_2
\# \mu')]=\tau_{(3,2),(3,2)}[H(\mu')]=-[H(\mu')]
\]
using that $H(\pi_2)=0$ from Lemma~\ref{lem:Hopf-trivial}.  Finally,
we turn to the commutative square
\[ \xymatrix{
S_\HH\times S_\HH \ar[r]^-\mu \ar[d]_{\id\times \omega} & S_\HH \ar[d]^{\omega} \\
S_\HH \times (\A^2-0) \ar[r]^-{\mu'} & \A^2-0.
}
\]
The vertical maps are equivalences, so $H(\mu')$ and $H(\mu)$
represent the same element of $\pi_{*,*}(S)$.  
Since $H(\mu)$ is equal to $\nu$ by definition,
we have
\[ 
[H(\beta)]=[H(\beta')]=-[H(\mu')]=-[H(\mu)]=-\nu.
\]
\end{proof}

\begin{remark}
\label{rem:omega-warning}
In the proof of Lemma \ref{lem:beta-nu}, 
we have not established that
$\omega'\colon S_\OO \ra (\A^4-0)$ and $\omega\colon S_\HH \ra (\A^2-0)$
are orientation-preserving.   
By Proposition~\ref{pr:Hopf-or}, this
issue is irrelevant because the homotopy elements represented by 
Hopf constructions are independent of these orientations.  
\end{remark}


\appendix

\section{Stable splittings of products}
\label{se:stable-split}

These appendices develop certain technical homotopy-theoretic
constructions that are used in the body of the paper.
Appendices~\ref{se:stable-split} and \ref{se:joins}, as
well as the first part of Appendix \ref{se:Hopf}, build on ideas that appear 
in the papers of Morel \cite{M1,M2}.  Our aim here is to offer
additional details that are necessary for our proofs.  This
applies, in particular, to the proof of Proposition~\ref{pr:H(meld)},
which is the most important technical tool for
the paper.
These appendices are largely structured with the goal of providing
a comprehensible proof of Proposition \ref{pr:H(meld)}.

\subsection{Generalities}
\label{se:stable-split1}
For most of the applications in this paper, it suffices to work
in the stable category of motivic spectra.  This is also true for the
splittings we are about to discuss, and for the Hopf construction
developed in Section \ref{se:Hopf}.  However, for didactic reasons we are
briefly going to work {\it unstably\/} and be careful about the number
of suspensions required at various stages.  

Let $j\colon A\inc B$ be a cofibration of pointed motivic spaces, and let
$p\colon B\ra B/A$ be the quotient map.  Suppose that there is a map
$\alpha\colon\Sigma B\ra \Sigma A$ that splits $\Sigma j$ in the
homotopy category, i.e., $\alpha(\Sigma j)\he \id_{\Sigma A}$.  For any
pointed object $X$, we have an exact sequence
\[ \cdots \lla [B/A,X]_* \lla [\Sigma A,X]_* \llla{(\Sigma j)^*} [\Sigma B,X]_*
\llla{(\Sigma p)^*} 
[\Sigma(B/A),X]_* \la \cdots
\]
of sets.
Then $\alpha^*$ is a splitting for $(\Sigma j)^*$, 
so $(\Sigma j)^*$ is surjective.
It follows that we have
an exact sequence of groups
\[ 1 
\lla 
[\Sigma A,X]_* \llla{(\Sigma j)^*} [\Sigma B,X]_*
\llla{(\Sigma p)^*} [\Sigma(B/A),X]_* \lla 1.
\]
Because $\alpha$ is not necessarily the suspension of a map $B \ra A$,
the map $\alpha^*$ is not necessarily a group homomorphism.
So the exact sequence is not necessarily split-exact.
Although the groups in
the above sequence need not be abelian, we will still write $+$ for
the group operation and $0$ for the identity. 
When $X$ equals $\Sigma B$, the element
$\id_{\Sigma B} - (\Sigma j)\alpha$ of $[\Sigma B, \Sigma B]_*$
belongs to the kernel of $(\Sigma j)^*$ 
and is therefore in the image of $(\Sigma p)^*$.

\begin{defn}
\label{de:chi}
The map $\chi\colon \Sigma(B/A) \ra \Sigma B$ is the unique
map in the homotopy category of pointed spaces such that 
$\id_{\Sigma B}$ equals $(\Sigma j)\alpha + \chi (\Sigma p)$.
\end{defn}

\begin{lemma}
\label{lem:chi-prop}
The map $\chi$ satisfies:
\begin{enumerate}
\item
$(\Sigma p)\chi=\id_{\Sigma(B/A)}$.
\item
$\alpha \chi=0$.
\end{enumerate}
\end{lemma}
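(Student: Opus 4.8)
The plan is to extract both identities from the defining relation
$\id_{\Sigma B}=(\Sigma j)\alpha+\chi(\Sigma p)$ of Definition~\ref{de:chi} by post-composing it with a well-chosen map and then cancelling a factor of $\Sigma p$. Two facts make this work: post-composition by a fixed map is a group homomorphism on $[\Sigma B,X]_*$ (the group structure comes from the suspension coordinate on the source, so post-composition respects it), and exactness of the displayed sequence at the right-hand end says that $(\Sigma p)^*$ is injective for every pointed $X$.

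For part (1), I would post-compose the defining relation with $\Sigma p$. Since post-composition by $\Sigma p$ is a homomorphism $[\Sigma B,\Sigma B]_*\to[\Sigma B,\Sigma(B/A)]_*$, this yields
$\Sigma p=(\Sigma p)(\Sigma j)\alpha+\big((\Sigma p)\chi\big)(\Sigma p)$.
The composite $pj\colon A\to B/A$ is the constant map, so $(\Sigma p)(\Sigma j)=0$ and the first summand drops out, leaving $\Sigma p=\big((\Sigma p)\chi\big)\circ(\Sigma p)$. In other words $(\Sigma p)^*$ carries both $(\Sigma p)\chi$ and $\id_{\Sigma(B/A)}$ to $\Sigma p$ in $[\Sigma B,\Sigma(B/A)]_*$; injectivity of $(\Sigma p)^*$ (exactness with $X=\Sigma(B/A)$) then forces $(\Sigma p)\chi=\id_{\Sigma(B/A)}$.

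For part (2), I would instead post-compose the defining relation with $\alpha$, using that post-composition by $\alpha$ is a homomorphism $[\Sigma B,\Sigma B]_*\to[\Sigma B,\Sigma A]_*$. This gives $\alpha=\alpha(\Sigma j)\alpha+(\alpha\chi)(\Sigma p)$, and since $\alpha(\Sigma j)\simeq\id_{\Sigma A}$ by hypothesis the first term is just $\alpha$; cancelling it (valid in any group, from $g=g+x\Rightarrow x=0$) yields $(\alpha\chi)(\Sigma p)=0$, i.e.\ $(\Sigma p)^*(\alpha\chi)=0$. Injectivity of $(\Sigma p)^*$ (now with $X=\Sigma A$) gives $\alpha\chi=0$.

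The only real subtlety — and the thing I would be most careful about in writing this up — is that the groups $[\Sigma B,X]_*$ need not be abelian: I must use only that \emph{post}-composition is a homomorphism (pre-composition by the non-suspension map $\chi$ generally is not), must keep the order of composites straight when distributing over $+$, and must phrase the cancellation step so it is valid in a possibly nonabelian group. Everything else is formal once one notes that the exact sequence, hence the injectivity of $(\Sigma p)^*$, holds for arbitrary pointed $X$.
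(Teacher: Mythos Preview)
Your proof is correct and is essentially the same argument as the paper's, just written out in more detail: the paper also post-composes the defining relation with $\Sigma p$ (resp.\ $\alpha$) to show that $(\Sigma p)^*\bigl((\Sigma p)\chi-\id\bigr)=0$ (resp.\ $(\Sigma p)^*(\alpha\chi)=0$) and then invokes injectivity of $(\Sigma p)^*$. Your explicit attention to the nonabelian cancellation and to why post-composition is a homomorphism fills in steps the paper leaves to the reader.
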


\begin{proof}
Let $X$ be $\Sigma(B/A)$.  Compute that
$(\Sigma p)^* ( (\Sigma p) \chi - \id_{\Sigma B/A} )$ is zero.
Since $(\Sigma p)^*$ is one-to-one, we get that
$(\Sigma p) \chi - \id_{\Sigma B/A}$ is zero.

For the second, let $X$ be $\Sigma A$.
Compute that $(\Sigma p)^* (\alpha \chi)$ is zero.
Since $(\Sigma p)^*$ is one-to-one, we get that
$\alpha \chi$ is zero.
\end{proof}

We now suspend once more to obtain the sequence
\begin{myequation}
\label{eq:2-split}
 0 \lla
[\Sigma^2 A,X]_* \llla{(\Sigma^2 j)^*} [\Sigma^2 B,X]_*
\llla{(\Sigma^2 p)^*} [\Sigma^2(B/A),X]_* \lla 0.
\end{myequation}
This is now a short exact sequence of abelian groups, and it {\it
is\/} split-exact because the map $(\Sigma \alpha)^*$ is a group
homomorphism.  

With at least two suspensions, we have the
following converse to Lemma \ref{lem:chi-prop}.

\begin{lemma}
\label{lem:chi-char}
Let $i \geq 2$.
Suppose $x\colon \Sigma^i (B/A) \ra \Sigma^i B$
is  such that
\begin{enumerate}
\item
$(\Sigma^i p) x$ equals $\id_{\Sigma^i (B/A)}$.
\item
$(\Sigma^{i-1} \alpha) x$ equals zero.
\end{enumerate}
Then $\Sigma x$ equals $\Sigma^{i}\chi$ in
$[\Sigma^{i+1}(B/A),\Sigma^{i+1}B]_*$.  
\end{lemma}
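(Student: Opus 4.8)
The plan is to suspend everything once, so that one is comparing $\Sigma x$ and $\Sigma^{i}\chi$ as elements of the abelian group $[\Sigma^{i+1}(B/A),\Sigma^{i+1}B]_*$, and then to show that these two maps must coincide because each is a section of $\Sigma^{i+1}p$ that is in addition annihilated by postcomposition with $\Sigma^{i}\alpha$, and a map with those two properties is unique. So the heart of the matter is a uniqueness statement, and the role of the hypothesis $i\ge 2$ is to guarantee that we have suspended far enough for that uniqueness to hold.

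Concretely I would proceed as follows. First, suspend the two hypotheses: since $\Sigma$ is a homomorphism on the relevant homotopy groups, condition (1) gives $(\Sigma^{i+1}p)(\Sigma x)=\id_{\Sigma^{i+1}(B/A)}$ and condition (2) gives $(\Sigma^{i}\alpha)(\Sigma x)=0$. Next, apply $\Sigma^{i}$ to Lemma~\ref{lem:chi-prop}, obtaining $(\Sigma^{i+1}p)(\Sigma^{i}\chi)=\id_{\Sigma^{i+1}(B/A)}$ and $(\Sigma^{i}\alpha)(\Sigma^{i}\chi)=0$; thus $\Sigma x$ and $\Sigma^{i}\chi$ satisfy exactly the same pair of identities. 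Now form the difference $g=\Sigma x-\Sigma^{i}\chi$ in $[\Sigma^{i+1}(B/A),\Sigma^{i+1}B]_*$; because the source is a suspension, postcomposition with a fixed map is additive, so $(\Sigma^{i+1}p)\,g=0$ and $(\Sigma^{i}\alpha)\,g=0$. Using the cofiber sequence $\Sigma^{i+1}A\xrightarrow{\ \Sigma^{i+1}j\ }\Sigma^{i+1}B\xrightarrow{\ \Sigma^{i+1}p\ }\Sigma^{i+1}(B/A)$, the relation $(\Sigma^{i+1}p)\,g=0$ lets me write $g=(\Sigma^{i+1}j)\circ h$ for some $h\colon\Sigma^{i+1}(B/A)\to\Sigma^{i+1}A$. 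Postcomposing with $\Sigma^{i}\alpha$ and using $(\Sigma^{i}\alpha)(\Sigma^{i+1}j)=\Sigma^{i}(\alpha\circ\Sigma j)=\Sigma^{i}(\id_{\Sigma A})=\id$ then yields $0=(\Sigma^{i}\alpha)\,g=h$, so $g=0$ and $\Sigma x=\Sigma^{i}\chi$, as desired.

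The step I expect to be the main obstacle is the factorization $g=(\Sigma^{i+1}j)\circ h$: unstably the two identities on $\Sigma x$ do \emph{not} by themselves determine it, since a wedge admits more maps into it than the corresponding product does, and $\Sigma^{i+1}B$ splits only as a wedge $\Sigma^{i+1}A\vee\Sigma^{i+1}(B/A)$. What makes the factorization legitimate is that one is working in (or has suspended into) the stable range, where the displayed cofiber sequence is also a fiber sequence, so that $[\Sigma^{i+1}(B/A),-]$ is exact on it; the hypothesis $i\ge 2$, together with the split-exactness established for sequence~(\ref{eq:2-split}) at the two-suspension threshold, is precisely what puts us there, and this is why the lemma is stated one suspension up rather than as an identity at level $i$. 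An equivalent way to package the conclusion, avoiding the difference altogether, is to observe that $\Sigma x$ and $\Sigma^{i}\chi$ are two sections of the split epimorphism $\Sigma^{i+1}p$, that the set of such sections is a coset of $\{(\Sigma^{i+1}j)h : h\in[\Sigma^{i+1}(B/A),\Sigma^{i+1}A]\}$, and that the extra condition $(\Sigma^{i}\alpha)(-)=0$ forces the corresponding $h$ to vanish; this is the same computation in different clothing.
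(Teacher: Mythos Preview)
Your overall strategy---show that $\Sigma x$ and $\Sigma^i\chi$ satisfy the same two identities and then argue uniqueness---is sound in spirit, but the factorization step is not justified in the setting of the paper. You write that from $(\Sigma^{i+1}p)g=0$ one may factor $g=(\Sigma^{i+1}j)h$ by ``using the cofiber sequence'', and you defend this by appealing to a stable range where cofiber sequences are also fiber sequences. But cofiber sequences only yield exactness for $[-,X]$ (the contravariant functor); they do \emph{not} in general yield exactness for $[Z,-]$, and the paper is working unstably in a motivic setting where no Freudenthal-type theorem has been set up. The split short exact sequence~(\ref{eq:2-split}) you cite is the contravariant one and does not supply the covariant exactness you need. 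More concretely: to get $g=(\Sigma^{i+1}j)(\Sigma^i\alpha)g+(\Sigma^i\chi)(\Sigma^{i+1}p)g$ from the identity decomposition, you would have to distribute the sum over precomposition by $g$, and this requires $g$ to be a co-$H$-map. Your $g=\Sigma x-\Sigma^i\chi$ is a difference of suspensions, and such a difference need not be a co-$H$-map.

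The paper's proof is shorter and sidesteps this issue entirely: rather than forming a difference, it precomposes the suspended identity decomposition
\[
\id_{\Sigma^{i+1}B}=(\Sigma^{i+1}j)(\Sigma^i\alpha)+(\Sigma^i\chi)(\Sigma^{i+1}p)
\]
directly with $\Sigma x$. Because $\Sigma x$ \emph{is} a suspension, it is a co-$H$-map, so $(A+B)(\Sigma x)=A(\Sigma x)+B(\Sigma x)$ holds; the two hypotheses then kill the first summand and reduce the second to $\Sigma^i\chi$. This is precisely why the conclusion is stated one suspension up: the extra suspension on $x$ is what makes the distributivity legitimate, not any stable-range or fiber-sequence argument. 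If you rewrite your proof to apply the identity decomposition to $\Sigma x$ itself (rather than to $g$), you recover exactly the paper's argument.
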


\begin{proof}
We simply compute:
\begin{align*}
\Sigma x =\id\circ(\Sigma x) &= 
[ (\Sigma^{i} j) (\Sigma^{i-1} \alpha) +
(\Sigma^{i-1} \chi)(\Sigma^i p) ] (\Sigma x)\\
& = 
 (\Sigma^{i} j) (\Sigma^{i-1} \alpha)(\Sigma x) +
(\Sigma^{i-1} \chi)(\Sigma^i p) (\Sigma x) \\
& = 
\Sigma^i \chi,
\end{align*}
where the second equality comes from Definition \ref{de:chi} and the
fourth equality comes from the given properties of $x$.  In the third
equality we have used $(A+B)(\Sigma x)=A(\Sigma x)+B(\Sigma x)$; note
that the analogous formula without the suspension does not hold in
general.
\end{proof}

\begin{remark}
We will often apply Lemma \ref{lem:chi-char}
in the case $i=\infty$, where the statement
yields that the stable homotopy class of $\chi$ is characterized by the
given two properties.
\end{remark}

\subsection{Splittings of products}
Now let $X$ and $Y$ be pointed spaces, and specialize to the  cofiber sequence
\begin{myequation}
\label{eq:j-p}
\xymatrix{
X \vee Y \ar[r]^j & X \times Y \ar[r]^p & X \Smash Y.
}
\end{myequation}
Let $\pi_1\colon X \times Y \map X$ and 
$\pi_2\colon X \times Y \map Y$ be the two projection maps.
Let $\alpha$ be the homotopy class
$\Sigma \pi_1 + \Sigma \pi_2\colon \Sigma(X\times Y) \map \Sigma X \vee
\Sigma Y$, defined using the group structure on $[\Sigma(X\times
Y),\Sigma (X\vee Y)]_*$.  
The composition $\alpha (\Sigma j)$ is
the identity (up to homotopy), i.e. $\Sigma \pi_1 + \Sigma \pi_2$ splits $\Sigma j$.

By Definition \ref{de:chi}, we obtain a map
$\chi\colon \Sigma(X \Smash Y) \map \Sigma(X\times Y)$, uniquely
defined up to based homotopy.
This map is a splitting for $\Sigma p$ and
satisfies $(\Sigma \pi_1+\Sigma\pi_2) \chi=0$.
Moreover, Lemma \ref{lem:chi-char} says that 
$\chi$ is completely characterized by these criteria, up to suspension.
When necessary for clarity, we will write
$\chi(X,Y)$ for the map
$\chi\colon \Sigma(X \Smash Y) \map \Sigma(X\times Y)$.

The definition of $\chi$ shows that it is natural in $X$ and $Y$.  That is, if $Z$ and
$W$ are also pointed and $f\colon X\ra Z$ and $g\colon Y\ra W$ are two
based maps, then the diagram
\[ 
\xymatrix{ 
\Sigma(X\Smash Y) \ar[r]^\chi \ar[d]_{\Sigma(f\Smash g)} &
\Sigma(X\times Y) \ar[d]^{\Sigma(f\times g)}\\
\Sigma(Z\Smash W) \ar[r]_\chi & \Sigma(Z\times W)
}
\]
commutes in the based homotopy category.
This is a routine argument, boiling down to the fact that projections
and inclusions are natural.

In several cases we will need to understand the compatibility of
$\chi$ with the twist maps where one interchanges the roles of $X$ and
$Y$:

\begin{lemma}
\label{lem:Hopf-twist}
The diagram
\[
\xymatrix{
\Sigma(X \times Y) \ar[r]^{\Sigma T_\times} & \Sigma(Y \times X) \\
\Sigma(X \Smash Y) \ar[u]^{\chi(X,Y)} \ar[r]_{\Sigma T_\Smash} & \Sigma(Y \Smash X)
\ar[u]_{\chi(Y,X)}
}
\]
commutes (up to homotopy) after two suspensions,
where $T_\times$ and $T_\Smash$ are the evident twist maps.
\end{lemma}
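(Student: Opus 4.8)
The plan is to reduce the statement to the uniqueness characterization of $\chi$ in Lemma~\ref{lem:chi-char}, applied with $i=2$ to the cofiber sequence $Y\Wedge X\to Y\times X\to Y\Smash X$ obtained from (\ref{eq:j-p}) by interchanging $X$ and $Y$. Since the smash twist $T_\Smash$ is an isomorphism (even unstably), I would form the map
\[
x \;:=\; (\Sigma^2 T_\times)\circ(\Sigma\chi(X,Y))\circ(\Sigma^2 T_\Smash)^{-1}\colon \Sigma^2(Y\Smash X)\lra \Sigma^2(Y\times X),
\]
and verify the two hypotheses of Lemma~\ref{lem:chi-char} for it. Granting that, the lemma yields $\Sigma x=\Sigma^2\chi(Y,X)$ in $[\Sigma^3(Y\Smash X),\Sigma^3(Y\times X)]_*$, and cancelling the invertible map $\Sigma^3 T_\Smash$ rewrites this as $(\Sigma^3 T_\times)\circ(\Sigma^2\chi(X,Y))=(\Sigma^2\chi(Y,X))\circ(\Sigma^3 T_\Smash)$ — which is precisely the assertion that $\Sigma^2$ applied to the displayed square commutes.

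The first hypothesis, $(\Sigma^2 p)\circ x=\id$, should follow from the strictly commutative square $p_{Y,X}\circ T_\times=T_\Smash\circ p_{X,Y}$ (projecting a product onto its smash factor commutes on the nose with interchanging the two factors) together with $(\Sigma p)\circ\chi(X,Y)=\id$ from Lemma~\ref{lem:chi-prop}(1): one substitutes these into the definition of $x$ and suspends once more.

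The second hypothesis, $(\Sigma\alpha_{Y,X})\circ x=0$, rests on the naturality of the chosen splitting $\alpha$ with respect to the twist: from $\pi_1^{Y,X}\circ T_\times=\pi_2^{X,Y}$ and $\pi_2^{Y,X}\circ T_\times=\pi_1^{X,Y}$ one obtains $\alpha_{Y,X}\circ(\Sigma T_\times)=(\Sigma T_\Wedge)\circ\alpha_{X,Y}$, where $T_\Wedge\colon X\Wedge Y\to Y\Wedge X$ interchanges the two wedge summands. Feeding this into $x$, the composite $(\Sigma\alpha_{Y,X})\circ x$ becomes a suspension of $(\Sigma T_\Wedge)\circ\bigl(\alpha_{X,Y}\circ\chi(X,Y)\bigr)$, post-composed with an isomorphism, and $\alpha_{X,Y}\circ\chi(X,Y)=0$ by the defining property of $\chi$ (Definition~\ref{de:chi}, cf. Lemma~\ref{lem:chi-prop}(2)).

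I expect the only genuinely delicate point to be the suspension bookkeeping in this last step: additivity of pre- and post-composition with the various maps, and the identity $\alpha_{Y,X}\circ(\Sigma T_\times)=(\Sigma T_\Wedge)\circ\alpha_{X,Y}$ — which a priori may require one more suspension, since $[\Sigma(X\times Y),\Sigma(Y\Wedge X)]_*$ need not be abelian and the two summands of $\alpha$ need not commute at that level — must be tracked so that the total number of suspensions stays within the budget allowed by Lemma~\ref{lem:chi-char} and by the phrase ``after two suspensions''. Because Lemma~\ref{lem:chi-char} is already stated with room to spare and one may always add suspensions freely, this poses no real obstruction, but it is the portion of the argument I would write out with the most care.
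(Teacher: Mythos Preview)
Your approach is correct and is essentially the same as the paper's: both arguments conjugate one $\chi$ by the twist maps and then invoke the uniqueness characterization Lemma~\ref{lem:chi-char}. The paper happens to go around the square in the opposite direction, defining $f=(\Sigma T_\times)\circ\chi(Y,X)\circ(\Sigma T_\Smash)$ and checking it against the characterization of $\chi(X,Y)$, but this is a purely cosmetic difference. One minor remark: the ``delicate point'' you flag about the order of summands in $\alpha$ is in fact harmless even at the single-suspension level, since in any group $a+b=0$ implies $b+a=0$; so no extra suspension is needed there.
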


\begin{proof}
Let $f$ denote the composite
\[ \Sigma(X\Smash Y) \llra{\Sigma T_\Smash} \Sigma(Y\Smash
X)\llra{\chi(Y,X)} \Sigma(Y\times X) \llra{\Sigma T_\times} \Sigma(X\times
Y).
\]
We want to show that $\Sigma^2 f$ equals $\Sigma^2 \chi(X,Y)$.
By Lemma \ref{lem:chi-char},
 it suffices to prove that $(\Sigma \pi_1+\Sigma \pi_2)f=0$ and
$(\Sigma p)f=\id_{\Sigma(X\Smash Y)}$.  These follow from
naturality of the twist maps and the corresponding
properties of $\chi(Y,X)$. 
\end{proof}

\begin{remark}
Lemma \ref{lem:Hopf-twist} is almost true after one
suspension, but we need the extra suspension because of the
restriction $i\geq 2$ from Lemma \ref{lem:chi-char}.
\end{remark}

\subsection{Stable considerations}
From now on we 
disregard the suspensions required for the careful
statement of unstable results.
That is, we work in the stable category of spectra.
When $X$ is a pointed space, we will often abuse notation
and write $X$ again for $\Sigma^\infty X$.
Also, it will be convenient to now let $\chi$ denote the desuspension
of the splitting $\Sigma(X\Smash Y)\ra \Sigma(X\times Y)$ produced in
the last section.  So $\chi$ is now a map 
$X\Smash Y\ra
X\times Y$.

Proposition~\ref{pr:chi_stable} below is a stable version of 
Lemma~\ref{lem:chi-char}, with a slight strengenthing due to stability.

\begin{prop}
\label{pr:chi_stable}
The map $\chi$ is the unique stable homotopy class
$X\Smash Y \ra X\times Y$ 
such that 
$ p\chi$ is the identity on $X \Smash Y$ and
$\pi_1\chi=\pi_2\chi=0$ is zero.
\end{prop}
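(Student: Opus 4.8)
The plan is to reduce the stable statement directly to the unstable characterization of $\chi$ given in Lemma~\ref{lem:chi-char}, together with the properties already recorded in Lemma~\ref{lem:chi-prop}. First I would observe that the map $\chi\colon X\Smash Y\ra X\times Y$ under discussion is, by definition in the preceding subsection, the desuspension of the unstable map $\chi(X,Y)\colon \Sigma(X\Smash Y)\ra\Sigma(X\times Y)$ produced from Definition~\ref{de:chi}. Lemma~\ref{lem:chi-prop} then already gives us that $p\chi=\id_{X\Smash Y}$ (this is $(\Sigma p)\chi=\id$ desuspended, since suspension is fully faithful enough on these objects after stabilization) and that $\alpha\chi=0$; spelling out $\alpha=\pi_1+\pi_2$ in the appropriate stable group, this second equation reads $\pi_1\chi+\pi_2\chi=0$. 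So $\chi$ satisfies the two asserted properties except that I must upgrade ``$\pi_1\chi+\pi_2\chi=0$'' to the stronger ``$\pi_1\chi=0$ and $\pi_2\chi=0$''. This is the one genuinely stable input: I would argue that $\pi_1\chi$ factors through the inclusion $X\vee *\hookrightarrow X\times Y$ followed by a projection, but more simply, I can precompose the commuting square expressing naturality of $\chi$ with the map collapsing $Y$ to a point, so that $\chi(X,*)=0$ because $X\Smash *=*$; naturality then forces $\pi_1\chi=\chi(X,*)\circ(\id\Smash(\text{collapse}))\circ(\dots)$ to be null, and symmetrically for $\pi_2$. (The simplest route is the diagram already used inside the proof of Proposition~\ref{prop:eta-nu}: collapse $S_\HH$, here $Y$, to a point and note $\chi$ lands in $X\times *$.)

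Next I would prove uniqueness. Suppose $x\colon X\Smash Y\ra X\times Y$ is any stable homotopy class with $px=\id_{X\Smash Y}$ and $\pi_1 x=\pi_2 x=0$. I want to conclude $x=\chi$. The cleanest argument is to use the split short exact sequence of abelian groups obtained from stabilizing \eqref{eq:j-p}: for any spectrum $W$,
\[
0\la [X\vee Y,W]\llla{j^*}[X\times Y,W]\llla{p^*}[X\Smash Y,W]\la 0
\]
is (split) exact, hence so is the dual Hom-sequence in the variable we need. Concretely, apply $[X\Smash Y,-]$ to the cofiber sequence $X\vee Y\ra X\times Y\ra X\Smash Y$: we get an exact sequence in which $\chi-x$ lies in the kernel of $p_*$ (since $p(\chi-x)=\id-\id=0$) and therefore comes from a class $c\in [X\Smash Y, X\vee Y]$ via $j$, i.e.\ $\chi-x=j\circ c$. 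Now compose with the retraction $\pi_1\vee\pi_2$-type maps: $\pi_i j = \pi_i|_{X\vee Y}$ are the standard projections $X\vee Y\ra X$ or $Y$, and $\pi_i(\chi-x)=0-0=0$ by hypothesis on both $\chi$ and $x$. Since stably $X\vee Y\simeq X\times Y$ and the pair $(\pi_1 j,\pi_2 j)$ is a retraction exhibiting $[X\Smash Y, X\vee Y]\iso [X\Smash Y,X]\times[X\Smash Y,Y]$, the vanishing of both components forces $c=0$, hence $\chi=x$.

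I expect the main obstacle to be bookkeeping rather than conceptual: making sure that the passage from the unstable characterization (Lemma~\ref{lem:chi-char}, valid only after at least two suspensions) to the clean stable statement is done honestly, and in particular that ``$\alpha\chi=0$ with $\alpha=\pi_1+\pi_2$'' really does split stably into the two separate vanishing statements. The subtlety is exactly the one flagged in the paper around Lemma~\ref{lem:chi-char}: unstably the group $[\Sigma(X\times Y),\Sigma(X\vee Y)]_*$ need not be abelian and $\alpha^*$ need not be a homomorphism, so one cannot naively decompose $\alpha$. Stably everything is abelian and the Hilton–Milnor/wedge splitting $[\,-,\,X\vee Y]\iso[\,-,X]\oplus[\,-,Y]$ is available, which is precisely what legitimizes both the strengthening of $\alpha\chi=0$ and the uniqueness argument above. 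Once that stable splitting is invoked, the rest is the formal cofiber-sequence manipulation sketched above.
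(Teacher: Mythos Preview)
Your proposal is correct, but the paper's proof is shorter and organized differently. The paper simply invokes Lemma~\ref{lem:chi-char} (in its stable form) to get that $\chi$ is already the \emph{unique} class with $p\chi=\id$ and $(\pi_1+\pi_2)\chi=0$, so both existence and uniqueness are already done; all that remains is to show that the condition $(\pi_1+\pi_2)\chi=0$ is equivalent to $\pi_1\chi=\pi_2\chi=0$. One direction is immediate by additivity. For the other, the paper post-composes $(\pi_1+\pi_2)\chi=0$ with the projection $X\vee Y\to X$ and uses that $\pi_1\pi_2$ is null; this kills the $\pi_2$ term and yields $\pi_1\chi=0$ directly.

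By contrast, you establish $\pi_i\chi=0$ via naturality of $\chi$ under the collapse $Y\to *$ (which is fine, and is indeed the argument used later in the proof of Proposition~\ref{prop:eta-nu}), and then you re-prove uniqueness from scratch via the cofiber sequence and the stable splitting $[X\Smash Y,X\vee Y]\cong[X\Smash Y,X]\oplus[X\Smash Y,Y]$. This works, but it duplicates the content of Lemma~\ref{lem:chi-char} rather than citing it. The paper's post-composition trick is the slicker route to separating $\pi_1\chi$ from $\pi_2\chi$, and leaning on Lemma~\ref{lem:chi-char} avoids reproving uniqueness.
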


\begin{proof}
Lemma~\ref{lem:chi-char} implies that $\chi$ is the unique stable
homotopy class such that $p\chi=\id_{X\Smash Y}$ and
$(\pi_1+\pi_2)\chi=0$.
It suffices for us to show that the second condition is equivalent to
$\pi_1\chi=\pi_2\chi=0$.  Clearly the latter implies the former, since
$(\pi_1+\pi_2)\chi=\pi_1\chi+\pi_2\chi$ (and note that this uses
stability).  Conversely, if $(\pi_1+\pi_2)\chi=0$ then multiplying by
$\pi_1$ on the left gives $\pi_1\chi=0$ since the composite
$\pi_1\pi_2$ is null.  
Similarly, left multiplication by $\pi_2$ gives $\pi_2\chi=0$.
\end{proof}

\begin{cor}
\label{co:stable-split}
In the stable homotopy category,
\begin{enumerate}[(a)]
\item
$\pi_1+\pi_2+p\colon 
X\times Y \map X \vee Y \vee (X\Smash Y)$
is an isomorphism, and
$j \vee \chi$ is a homotopy inverse.
\item $j(\pi_1+\pi_2) + \chi p$ is the identity on $X \times Y$.
\end{enumerate}
\end{cor}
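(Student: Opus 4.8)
The plan is to derive both parts formally from three ingredients: the defining properties of $\chi$ recorded in Proposition~\ref{pr:chi_stable} and Definition~\ref{de:chi}, the fact that (\ref{eq:j-p}) is a cofiber sequence, and the fact that in the stable homotopy category finite wedges coincide with finite products, so that maps between such objects can be manipulated as matrices of components.

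First I would dispatch part (b), since it is essentially just the stabilization of the identity defining $\chi$. Stably, Definition~\ref{de:chi} reads $\id_{X\times Y} = j\alpha + \chi p$, where $\alpha = \pi_1+\pi_2 \colon X\times Y \ra X\vee Y$; since $j\alpha = j(\pi_1+\pi_2)$, this is exactly the assertion $j(\pi_1+\pi_2) + \chi p = \id_{X\times Y}$. So there is nothing further to prove for (b).

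For part (a), write $g = \pi_1+\pi_2+p\colon X\times Y \ra X\vee Y\vee(X\Smash Y)$ and $h = j\vee\chi\colon X\vee Y\vee(X\Smash Y) \ra X\times Y$. The composite $h g$ decomposes (using additivity) as $j(\pi_1+\pi_2) + \chi p$, which is $\id_{X\times Y}$ by part (b). For the reverse composite $g h$, I would compute its matrix of components relative to the three wedge summands. The needed entries are: $(\pi_1+\pi_2)j = \id_{X\vee Y}$ (the splitting property $\alpha(\Sigma j)\simeq\id$ from the preceding subsection), $p j \simeq 0$ (because (\ref{eq:j-p}) is a cofiber sequence), and $(\pi_1+\pi_2)\chi = 0$ together with $p\chi = \id_{X\Smash Y}$ (Proposition~\ref{pr:chi_stable}). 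Reading these off, the matrix of $g h$ is the identity matrix, so $g h = \id_{X\vee Y\vee(X\Smash Y)}$. Hence $g$ and $h$ are mutually inverse, which is part (a).

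The only step needing genuine care — and where all the bookkeeping lives — is the matrix computation for $g h$: one must invoke additivity to expand $(\pi_1+\pi_2+p)(j\vee\chi)$ into the listed component composites, and use that an endomorphism of a finite wedge with identity matrix of components is itself the identity. No homotopy-theoretic input beyond the cited results is required; in particular everything is stable here, so the suspension restrictions appearing in Lemmas~\ref{lem:chi-prop}--\ref{lem:chi-char} play no role.
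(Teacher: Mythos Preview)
Your proof is correct and follows essentially the same approach as the paper: part (b) is read off directly from Definition~\ref{de:chi}, and part (a) is verified by checking the four component identities $(\pi_1+\pi_2)j=\id$, $pj=0$, $(\pi_1+\pi_2)\chi=0$, $p\chi=\id$. Your treatment is in fact slightly more explicit than the paper's, which lists those four identities for one composite and leaves the reader to notice that part (b) supplies the other.
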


\begin{proof}
For part (a), use that $(\pi_1 + \pi_2) j = \id_{X \vee Y}$;
$p j = 0$; $(\pi_1 + \pi_2) \chi = 0$; and
$p \chi = \id_{X \Smash Y}$.
For part (b), $j (\pi_1 + \pi_2) + \chi p$ is $\id_{X \times Y}$
by Definition \ref{de:chi}.
\end{proof}

Let $\Delta_\times\colon X \map X\times X$ and
$\Delta_\Smash\colon X \map X \Smash X$
be the evident diagonal maps.  

\begin{lemma}
\label{lem:Hopf-diagonal}
Let $X$ be any pointed motivic space.
Then $\Delta_\times$ and $\chi \Delta_\Smash + j(\pi_1+\pi_2)\Delta_\times$
are equal maps $X \map X \times X$.
\end{lemma}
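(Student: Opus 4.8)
The plan is to deduce the identity directly from Corollary~\ref{co:stable-split}(b), which records that
\[
j(\pi_1+\pi_2) + \chi p = \id_{X\times X}
\]
in the stable homotopy category, where $p$ and $j$ come from the cofiber sequence~(\ref{eq:j-p}) with $Y = X$. The whole lemma is really just this relation precomposed with the diagonal, so the argument should be short.

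First I would precompose both sides of the displayed identity with $\Delta_\times\colon X\to X\times X$. Since we are working stably, the group of stable homotopy classes $[X, X\times X]$ is abelian and precomposition with $\Delta_\times$ is additive, so
\[
\Delta_\times = j(\pi_1+\pi_2)\Delta_\times + \chi\, p\, \Delta_\times .
\]
Second, I would observe that $p\,\Delta_\times = \Delta_\Smash$. This is immediate from the definitions: $p\colon X\times X\to X\Smash X$ is the quotient map, $\Delta_\times$ is the point-set diagonal $x\mapsto(x,x)$, and $\Delta_\Smash$ is by definition the composite of these two (equivalently, $p\,\Delta_\times$ and $\Delta_\Smash$ already agree at the space level, hence after applying $\Sigma^\infty$). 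Substituting yields
\[
\Delta_\times = j(\pi_1+\pi_2)\Delta_\times + \chi\,\Delta_\Smash ,
\]
which is the asserted formula, the order of the two summands being immaterial because the group is abelian.

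There is no genuine obstacle here; the only points requiring a moment's care are that the sum defining $\pi_1+\pi_2$ (formed in $[X\times X, X\vee X]$) is compatible with pre- and post-composition once one passes to the stable category, and that the additivity of $(-)\circ\Delta_\times$ really does use stability — the analogous unstable statement would hold only after suspending, as in Lemma~\ref{lem:chi-char}. Both facts are standard, so the lemma is essentially a restatement of Corollary~\ref{co:stable-split}(b).
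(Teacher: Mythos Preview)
Your proof is correct and follows exactly the same approach as the paper: precompose the identity $j(\pi_1+\pi_2)+\chi p=\id$ from Corollary~\ref{co:stable-split}(b) with $\Delta_\times$ and use $p\Delta_\times=\Delta_\Smash$. The paper's proof is essentially two sentences and omits your commentary on additivity and stability, but the argument is identical.
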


\begin{proof}
Start with $j(\pi_1+\pi_2) + \chi p = 1$ from Corollary \ref{co:stable-split}, 
and apply $\Delta_\times$
on the right.  Finally,
note that $\Delta_\Smash = p \Delta_\times$.
\end{proof}

Later we will need the following calculation of $\chi$ in 
a specific example.

\begin{lemma}
\label{le:chi-calc}
Let $S^{0,0}$ consist of the two points $1$ and $-1$, where $1$ is the
basepoint.
Let $i$, $j$, and $k$ be the based maps 
$S^{0,0} \map S^{0,0} \times S^{0,0}$ that take $-1$
to $(1,-1)$, $(-1,1)$, and $(-1,-1)$ respectively.
Then 
$\chi\colon S^{0,0} \Smash S^{0,0} \map S^{0,0} \times S^{0,0}$
is stably equal to the composition of the isomorphism
$S^{0,0} \Smash S^{0,0} \llra{\iso} S^{0,0}$
with the map $k - i - j$.
\end{lemma}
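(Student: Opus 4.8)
The plan is to reduce everything to the uniqueness characterization of $\chi$ furnished by Proposition~\ref{pr:chi_stable}. Since $\chi\colon S^{0,0}\Smash S^{0,0}\ra S^{0,0}\times S^{0,0}$ is the \emph{unique} stable homotopy class satisfying $p\chi=\id_{S^{0,0}\Smash S^{0,0}}$ and $\pi_1\chi=\pi_2\chi=0$, it suffices to check that the candidate map $f:=(k-i-j)\circ\psi$ satisfies these same three identities, where $\psi\colon S^{0,0}\Smash S^{0,0}\llra{\iso}S^{0,0}$ denotes the canonical isomorphism.

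First I would set everything up at the level of (constant presheaves on) finite pointed sets, so that $p$, $\pi_1$, $\pi_2$, $i$, $j$, $k$, and $\psi$ are all honest maps of pointed sets, and record the three elementary facts that do all the work. The quotient map $p$ sends the three wedge points $(1,1),(1,-1),(-1,1)$ to the basepoint of $S^{0,0}\Smash S^{0,0}$ and sends $(-1,-1)$ to the non-basepoint; hence $p\circ i$ and $p\circ j$ are constant at the basepoint (so they are $0$), while $p\circ k$ is the unit isomorphism $S^{0,0}\iso S^{0,0}\Smash S^{0,0}$, i.e. $\psi^{-1}$. The projection $\pi_1$ takes the value $-1$ exactly on the two points whose first coordinate is $-1$, so $\pi_1\circ j=\pi_1\circ k=\id_{S^{0,0}}$ and $\pi_1\circ i=0$; symmetrically $\pi_2\circ i=\pi_2\circ k=\id_{S^{0,0}}$ and $\pi_2\circ j=0$. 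Each of these is immediate once the four points of the product are written out.

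Then I would assemble these. Post-composition with $p$, $\pi_1$, and $\pi_2$ is additive on the relevant stable homotopy groups (these are sums of copies of $\pi_{0,0}(S)$, e.g. by Corollary~\ref{co:stable-split}(a)), so the stable class $k-i-j$ is carried by $p$ to $\psi^{-1}-0-0=\psi^{-1}$, by $\pi_1$ to $\id-0-\id=0$, and by $\pi_2$ to $\id-\id-0=0$. Composing with $\psi$ on the right then gives $pf=\psi^{-1}\psi=\id_{S^{0,0}\Smash S^{0,0}}$ and $\pi_1 f=\pi_2 f=0$, so Proposition~\ref{pr:chi_stable} forces $f=\chi$.

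The one point that genuinely needs care — and essentially the only obstacle — is that $i$, $j$, $k$ are ordinary maps of spaces, whereas $k-i-j$ only makes sense stably; so stability is used both to form the difference and to distribute post-composition over it. I would also want to be explicit that $p\circ k$ really is the canonical isomorphism $\psi^{-1}$ rather than differing by a unit of $\pi_{0,0}(S)$: there is a unique pointed isomorphism between two-point pointed sets, and the unit isomorphism $S^{0,0}\Smash S^{0,0}\iso S^{0,0}$ is canonical in the sense of the results cited from \cite{D}, so no stray sign enters. With those remarks in place the verification is a short finite computation with no further difficulty.
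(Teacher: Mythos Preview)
Your proposal is correct and follows essentially the same approach as the paper: apply the uniqueness characterization of $\chi$ from Proposition~\ref{pr:chi_stable} and verify the three conditions by computing $p$, $\pi_1$, and $\pi_2$ on each of $i$, $j$, $k$. The paper's proof is terser but records precisely the same list of elementary identities you wrote down.
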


\begin{proof}
We apply Proposition~\ref{pr:chi_stable}.  
The result follows from the observations that
\begin{enumerate}
\item
$p i$ and $p j$ are zero, whereas $pk$ is the identity.
\item
$\pi_1 i$ is zero, but $\pi_1 j$ and $\pi_1 k$ are the identity.
\item $\pi_2 j$ is zero, but $\pi_2 i$ and $\pi_2 k$ are the identity.
\end{enumerate}
\end{proof}

\subsection{Higher splittings}

Given based spaces $X_1, \ldots, X_n$ and a subset 
$S = \{ i_1, \ldots, i_k\}$ of $\{1, \ldots, n\}$, write
$X^{\times S}$ for $X_{i_1} \times \cdots \times X_{i_k}$ (where
$i_1<i_2<\cdots<i_k$).
Also, write
$X^{\Smash S}$ for $X_{i_1} \Smash \cdots \Smash X_{i_k}$.
Write $p_S\colon X_1\times\cdots \times X_n \ra X^{\Smash S}$ for the
composition
\[ X_1\times\cdots \times X_n \llra{\pi} X^{\times S} \llra{p} X^{\Smash S},
\]
where $\pi$ is the evident projection.

\begin{prop}
\label{pr:higher-split}
The map $\sum_S p_S\colon X_1\times\cdots \times X_n \ra
\bigWedge_{S} X^{\Smash S}$ is a weak equivalence in the stable
category, where 
the sum and wedge range over all
nonempty subsets $S$ of $\{1,\ldots,n\}$.  
\end{prop}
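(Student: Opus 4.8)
The plan is to argue by induction on $n$, using the two-variable stable splitting of Corollary~\ref{co:stable-split}(a) both as the base case and as the engine of the inductive step. For $n=1$ the only nonempty subset is $S=\{1\}$ and $p_{\{1\}}=\id$, so there is nothing to prove; for $n=2$ the claim is exactly Corollary~\ref{co:stable-split}(a).

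For the inductive step I would group the factors as $X_1\times\cdots\times X_n=W\times X_n$ with $W=X_1\times\cdots\times X_{n-1}$, and apply Corollary~\ref{co:stable-split}(a) to the pair $(W,X_n)$. This gives a stable equivalence
\[ X_1\times\cdots\times X_n \we W \Wedge X_n \Wedge (W\Smash X_n), \]
with components $\pi_1\colon X_1\times\cdots\times X_n\ra W$, $\pi_2\colon X_1\times\cdots\times X_n\ra X_n$, and $p\colon W\times X_n\ra W\Smash X_n$. By the inductive hypothesis, $\sum_T p_T\colon W\ra\bigWedge_{\emptyset\neq T\subseteq\{1,\ldots,n-1\}}X^{\Smash T}$ is a stable equivalence; smashing with $X_n$ (which preserves stable equivalences) and using that smashing with a fixed space commutes with wedges and that $X^{\Smash T}\Smash X_n=X^{\Smash(T\cup\{n\})}$, one also obtains a stable equivalence $W\Smash X_n\we\bigWedge_{\emptyset\neq T}X^{\Smash(T\cup\{n\})}$. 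Substituting these into the three wedge summands above, the target becomes a wedge of spaces $X^{\Smash S}$ where $S$ ranges over the nonempty subsets of $\{1,\ldots,n-1\}$ (first summand), the singleton $\{n\}$ (second summand), and the sets $T\cup\{n\}$ with $\emptyset\neq T\subseteq\{1,\ldots,n-1\}$ (third summand) --- that is, over all nonempty subsets of $\{1,\ldots,n\}$.

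What remains, and is the only step requiring real care, is to check that this composite equivalence agrees with $\sum_S p_S$; I would do this summand by summand. For a nonempty $S\subseteq\{1,\ldots,n-1\}$ the relevant component is $p_S^{(n-1)}\circ\pi_1$, and since $\pi_1$ is literally the coordinate projection $X_1\times\cdots\times X_n\ra X_1\times\cdots\times X_{n-1}$ this equals $p_S$. For $S=\{n\}$ the component is $\pi_2$, which is $p_{\{n\}}$. For $S=T\cup\{n\}$ with $T$ nonempty the component is $(p_T\Smash\id_{X_n})\circ p$; using naturality of the collapse map $A\times X_n\ra A\Smash X_n$ in $A$, together with the standard identity that collapsing the fat wedge in the coordinates of $T$ and then collapsing the resulting wedge $X^{\Smash T}\Wedge X_n$ equals collapsing the fat wedge in all coordinates of $T\cup\{n\}$, one rewrites $(p_T\Smash\id_{X_n})\circ p$ as $p_{T\cup\{n\}}$. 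Granting these three identifications, the composite built above is $\sum_S p_S$, and it is a stable equivalence; this completes the induction. I expect the bookkeeping in this last paragraph to be the main obstacle, since everything else is a formal consequence of the two-variable splitting and the elementary behaviour of smash products under wedges and associativity.
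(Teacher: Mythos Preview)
Your argument is correct and is exactly the approach the paper takes: the paper's proof reads, in its entirety, ``This follows from induction and part (a) of Corollary~\ref{co:stable-split}.''  You have simply supplied the bookkeeping that the paper leaves implicit, including the summand-by-summand identification of the composite with $\sum_S p_S$; all of those identifications are correct as stated.
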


\begin{proof}
This follows from induction and 
part (a) of Corollary~\ref{co:stable-split}.
\end{proof}

Proposition \ref{pr:higher-split} allows us to make 
the following definition.

\begin{defn}
\label{de:higher-chi}
Let
$\chi_S\colon X^{\Smash S} \ra X_1\times \cdots \times X_n$
be the unique homotopy class of maps
such that:
\begin{enumerate}
\item
$p_S\chi_S=\id$.
\item
$p_T\chi_S=0$ for all $T\neq S$.
\end{enumerate}
\end{defn}

Definition \ref{de:higher-chi}
generalizes the properties of the 2-fold splitting that
$p \chi$ is the identity, while $\pi_1 \chi$ and $\pi_2 \chi$ are both zero.
We will
usually write just $\chi$ instead of $\chi_S$.  Just as for the 
$2$-fold splittings, the maps $\chi_S$ are natural in the objects
$X_1,\ldots,X_n$.  

Analogously to Lemma~\ref{lem:Hopf-twist},
Proposition \ref{pr:higher-join-twist} shows that 
the higher splittings respect permutations of the factors.

\begin{prop}
\label{pr:higher-join-twist}
If $\sigma$ is a permutation of $\{ 1, \ldots, n \}$, then the diagram
\[
\xymatrix{
X_1 \Smash \cdots \Smash X_n \ar[r]\ar[d]_\chi &
X_{\sigma 1} \Smash \cdots \Smash X_{\sigma n} \ar[d]_\chi \\
X_1 \times \cdots \times X_n \ar[r] &
X_{\sigma 1} \times \cdots \times X_{\sigma n}  }
\]
is commutative,
where the horizontal maps are permutations of factors.
\end{prop}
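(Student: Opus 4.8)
The plan is to reduce the claim to the uniqueness built into Definition~\ref{de:higher-chi}, which in turn rests on Proposition~\ref{pr:higher-split}. Write $Y_i = X_{\sigma i}$ and let $\tau_\Smash\colon X_1 \Smash \cdots \Smash X_n \ra Y_1 \Smash \cdots \Smash Y_n$ and $\tau_\times\colon X_1 \times \cdots \times X_n \ra Y_1 \times \cdots \times Y_n$ be the two permutation-of-factors maps occurring as the horizontal arrows. Let $\chi^X$ and $\chi^Y$ denote the splittings $\chi_{\{1,\dots,n\}}$ attached to the orderings $(X_i)$ and $(Y_i)$ respectively. Commutativity of the square says $\tau_\times \circ \chi^X = \chi^Y \circ \tau_\Smash$, equivalently that $f := \tau_\times \circ \chi^X \circ \tau_\Smash^{-1}$ equals $\chi^Y$. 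By Definition~\ref{de:higher-chi} it suffices to verify that $p^Y_{\{1,\dots,n\}} f = \id$ and $p^Y_T f = 0$ for every proper nonempty $T \subseteq \{1,\dots,n\}$; everything takes place in the stable category, so the implicit subtraction of homotopy classes behind the uniqueness argument is legitimate.

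First I would record the equivariance of the projection maps under permutations: for each nonempty $T \subseteq \{1,\dots,n\}$ there is a commuting identity $p^Y_T \circ \tau_\times = \bar\rho_T \circ p^X_{\sigma(T)}$, where $\bar\rho_T\colon X^{\Smash \sigma(T)} \ra \bigSmash_{i \in T} X_{\sigma i}$ is the evident permutation-of-factors isomorphism. This is a formal consequence of the fact that the coordinate projections $\pi$ and the product-to-smash quotients $p$ are natural in the factors, exactly as in the two-fold case handled in Lemma~\ref{lem:Hopf-twist}. In the special case $T = \{1,\dots,n\}$ one has $\sigma(T) = \{1,\dots,n\}$ and $\bar\rho_T = \tau_\Smash$.

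Combining the equivariance identity with the defining properties of $\chi^X$ (namely $p^X_U \chi^X = \id$ for $U = \{1,\dots,n\}$ and $p^X_U \chi^X = 0$ for $U$ proper and nonempty) then completes the verification. For $T = \{1,\dots,n\}$ we get $p^Y_T f = \tau_\Smash \circ \bigl( p^X_{\{1,\dots,n\}} \chi^X \bigr) \circ \tau_\Smash^{-1} = \tau_\Smash \circ \tau_\Smash^{-1} = \id$. For $T$ proper and nonempty, $\sigma(T)$ is again proper and nonempty, so $p^Y_T f = \bar\rho_T \circ \bigl( p^X_{\sigma(T)} \chi^X\bigr) \circ \tau_\Smash^{-1} = 0$. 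By uniqueness, $f = \chi^Y$, which is the desired commutativity.

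I expect the only real obstacle to be notational bookkeeping in the equivariance identity: one must keep straight that a subset $T$ of the $Y$-indices corresponds to the subset $\sigma(T)$ of the $X$-indices, and that the internal reorderings $\bar\rho_T$ match the permutation maps that appear on the nose. Once the indices are tracked honestly there is no new idea beyond naturality, and the higher splitting case is no harder than the twofold one already proved.
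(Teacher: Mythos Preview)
Your proposal is correct and follows essentially the same approach as the paper, which simply remarks that the proof is similar to that of Lemma~\ref{lem:Hopf-twist}. You have spelled out in detail exactly that analogy: conjugate $\chi^X$ by the permutation maps and then use the uniqueness characterization of Definition~\ref{de:higher-chi} together with the naturality of the projections $p_T$ under permutations.
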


\begin{proof}
The proof is 
similar to the proof of Lemma~\ref{lem:Hopf-twist}.
\end{proof}

Now let $X_1,\ldots,X_n$ be formal symbols, and let 
$w$ be a parenthesized word made form these symbols using the two
operations $\times$ and $\Smash$.  For example, $w$ might be
$(X_1\times X_3)\Smash (X_4\times X_2)$.  Let $w'$ be a word
obtained from $w$ by changing one $\times$ symbol to a $\Smash$
symbol, e.g. $w'=(X_1\Smash X_3)\Smash (X_4\times X_2)$.  We can
regard both $w$ and $w'$ as functors $\Ho(\cC)^n \ra \Ho(\cC)$, and
we let $p$ denote the evident natural transformation $w\ra w'$. 
In our example, $p$ is more precisely
$p_{X_1,X_3}\Smash (\id_{X_4}\times \id_{X_2})$.  
There is also an evident natural
transformation $w'\ra w$ made from maps of the form $\chi_S$, and we denote
this just by $\chi$.  

\begin{remark}
\label{re:chi-p}
One has the following ``coherence results'' for the maps
$\chi$ and $p$:
\begin{enumerate}[(i)]
\item 
Given any two sequences of maps 
\[ w=w_1\llra{\chi} w_2\llra{\chi}\cdots
\llra{\chi} w_r=v
\]
and
\[ w=w_1'\llra{\chi} w_2'\llra{\chi}\cdots
\llra{\chi} w_s=v
\]
with the same source and target,
the two composite natural transformations are equal.
\item  
Given any two sequences of maps 
\[ w=w_1\llra{p} w_2\llra{p}\cdots
\llra{p} w_r=v
\]
and
\[ w=w_1'\llra{p} w_2'\llra{p}\cdots
\llra{p} w_s=v
\]
with the same source and target,
the two composite natural transformations are equal.  
\item 
Let ``$\chi$-map'' now refer to any composition as in (i), and
``$p$-map'' refer to any composition as in (ii).  From now on, if a
map is labelled as $\chi$ or $p$ it means it belongs to one of these
classes.

Let $\alpha$ be a composition as in (i) and let $\beta$ be a
composition as in (ii), and assume that the last word of $\alpha$
coincides with the first word in $\beta$ (so that $\beta\alpha$ makes
sense).  Moreover, assume that the ``spots" which $\beta$ turns
from $\times$ to $\Smash$ form a subset of the ``spots"  which
$\alpha$ turns from $\Smash$ to $\times$.  Then $\beta \alpha$ is a
$\chi$-map.  (This generalizes
the splitting property $p\chi=\id$).
\end{enumerate}
\end{remark}

We will not give proofs for the claims in Remark \ref{re:chi-p},
since proving them in
the stated generality involves an unpleasant amount of bookkeeping.
In all three cases, the proofs boil down to Proposition~\ref{pr:higher-split}.
When we use Remark \ref{re:chi-p} in the context of this paper, it will always be
in cases where only three or four maps are involved.  In those
cases, it is easy enough to check the claims by hand.  

\begin{ex}
Here are some examples to demonstrate the use of Remark~\ref{re:chi-p}.
\begin{enumerate}[(1)]
\item
The compositions
\[
\xymatrixrowsep{1pc}\xymatrixcolsep{3.5pc}\xymatrix{
X \Smash Y \Smash Z \ar[r]^{1 \Smash \chi(Y,Z)} &
X \Smash (Y \times Z) \ar[r]^-{\chi(X,Y\times Z)} & 
  X \times Y \times Z \\
X \Smash Y \Smash Z \ar[r]^{\chi(X,Y) \Smash 1} &
  (X \times Y) \Smash Z \ar[r]^-{\chi(X\times Y,Z)} & 
  X \times Y \times Z }
\]
are both equal to $\chi(X,Y,Z)$ in the homotopy category.
\item
The composition
\[
\xymatrixcolsep{4.9pc}\xymatrix{
W \Smash X \Smash Y \Smash Z \ar[r]^-{\chi(W,X) \Smash \chi(Y,Z)} &
(W \!\times\! X) \Smash (Y\! \times \! Z) \ar[r]^-{\chi(W\!\times \!X,Y\!\times\! Z)} &
W \!\times\! X \!\times\! Y\! \times\! Z  
}
\]
is  equal to $\chi(W,X,Y,Z)$ in the homotopy category.
\item
The triangle
\[ \xymatrix{X\Smash Y\Smash Z\ar[r]^\chi\ar[dr]_{\chi(X,Y)\Smash
1} & X\times Y\times Z\ar[d]^p \\
& (X\times Y)\Smash Z
}
\]
commutes for any objects $X$, $Y$, and $Z$.  
\end{enumerate}
\end{ex}


\section{Joins and other homotopically canonical constructions}
\label{se:joins}

In the next two sections we will need to deal with several
homotopical constructions.  In each situation, the output is not just a
single object but rather a whole contractible category of objects.
Things become complicated when we want to identify
the outputs of different multi-layered constructions as being essentially the same.
We start with a brief review of the machinery needed to handle
these kinds of situations.

\subsection{Canonical constructions}
\label{se:canon}

Suppose that $\cM$ is a model category, $I$ is a small category, and
$X\colon I\ra \cM$ is a diagram.  Homotopy theorists are faced with
the troublesome fact that there is not a single homotopy colimit for
$X$; rather, there are many different models for the homotopy colimit,
but they are all weakly equivalent to each other.  The trouble really begins
when one needs to {\it choose\/} a weak equivalence between two
different models, and then use this to perform further constructions.
One cannot choose the weak equivalence arbitrarily and expect to
obtain consistent results later on.   

As an elementary example, suppose 
that $A$ and $B$ are models for $\hocolim_I X$, and choose a weak
equivalence $A\ra B$.
If at some later stage one similarly chooses  a weak
equivalence $B\ra A$, then the composition $A\ra B\ra A$ may or may not be
homotopic to the identity.

One way to control these issues is as follows.  In good cases,
one can define a model category structure on the category of diagrams
$\cM^I$, where the weak equivalences and fibrations are the objectwise
ones \cite{H}.  Let $\Cof(X)$ be the category of cofibrant
approximations to $X$ in this model structure.  This category is
contractible \cite{H}, and the colimit of any object in $\Cof(X)$
gives a model for $\hocolim_I X$.  The image of the composition
\[ \Cof(X) \llra{\colim} \cM \ra \Ho(\cM)
\]
is a contractible groupoid; so for any two objects $A$ and $B$ in the
image, there is a unique isomorphism $A\ra B$ that is also in the
image.  Note that there might be many different isomorphisms in
$\Ho(\cM)$ from $A$ to $B$, but only {\it one\/} of them lies in the image of
the above composite.    In this sense there is a ``homotopically
canonical'' isomorphism between $A$ and $B$.

The considerations of the previous paragraph give a solution to our
problem, but it is not a simple one.  Given two models $A$ and $B$
for $\hocolim_I X$, we only can get our hands on the canonical
homotopy equivalence between them by finding diagrams $D$
and $D'$ in $\Cof(X)$ and specifying $A$ and $B$ as the colimit of
these diagrams---in essence, one must specify {\it why\/} $A$ and $B$
are models for $\hocolim_I X$, and only then does one get the
comparison map.  

A simple example demonstrates what is happening here.  The suspension of
a topological space $X$ is a homotopy colimit for the diagram $*\lla X \lra *$.
Given two spaces $A$ and $B$ that happen to have the homotopy type of
$\Sigma X$, there is not a unique way to get a weak equivalence $A\ra B$,
even up to homotopy.  However, if one specifies a decomposition 
into ``top" and ``bottom" cones for both $A$ and $B$, then one {\it
does\/} obtain a comparison map that is unique up to homotopy. 
The choice of top and bottom cones gives
a diagram $[C_+ \lla X \lra C_-]$ that is a cofibrant model for $*\lla{X}\lra *$.  

Now suppose that $I$ and $J$ are two small categories, and let
$X\colon I\ra \cM$ and $Y\colon J\ra \cM$ be two diagrams.  We think of
$\hocolim_I X$ as a contractible groupoid inside of $\Ho(\cM)$, and
likewise for $\hocolim_J Y$. 
Let $A$ and $B$ be specific models
for these two homotopy colimits, and let $A\ra B$ be a map.
For any other models $\hat{A}$ and $\hat{B}$ for the two homotopy colimits,
we immediately obtain corresponding maps $\hat{A}\ra
\hat{B}$ in $\Ho(\cM)$.
Namely, we have the composite 
$\hat{A}\iso A \lra B\iso \hat{B}$, where the first
and last isomorphisms are the ones from the respective contractible
groupoids.
In this case, we say that the maps $A\ra B$ and $\hat{A}\ra
\hat{B}$ are ``canonically isomorphic''.  

Often in practice, we have a certain {\it procedure\/} for
producing a map between models for $\hocolim_I X$ and $\hocolim_J Y$.
We want to know that this procedure, applied to $A$ and $B$, or
applied to $\hat{A}$ and $\hat{B}$, gives maps that are canonically
isomorphic---i.e., conjugate to each other via the contractible
groupoids for $\hocolim_I X$ and $\hocolim_J Y$.  This is often the
case, but it is something that needs to be {\it checked\/}; it is not
automatic.  

Unfortunately, there is no known efficient and
carefree way to keep track of all of these kinds of compatibilities.
While a direct approach works in simple arguments,
this becomes harder in multi-layered constructions.
We will see
some examples below.  
One precise but cumbersome technique is to work always at the level of diagrams,
i.e., work in $\cM^I$ and related categories as much as possible, rather
than work in $\Ho(\cM)$.  We follow this approach in most of our
arguments.  

When considering suspensions in a model category, we let $I$ be the
pushout category $0\lla 1 \lra 2$.  The category $\cM^I$ has a model
structure where the weak equivalences are objectwise and where the
cofibrant objects are diagrams 
\[
X_0 \lla X_1 \lra X_2
\]
such that each
$X_i$ is cofibrant and both maps are cofibrations.

\begin{defn}
Let $X$ be any object of $\cM$.
\dfn{Suspension data} for $X$ is a
cofibrant diagram $C_+ \lla QX \lra C_-$ where $C_+$ and $C_-$ are
contractible, together with a weak equivalence $QX\ra X$.
\end{defn}

See also Remark \ref{rem:susp-data} for a discussion of suspension data.
This is the same as specifying a cofibrant replacement for $*\lla X
\lra *$ in $\cM^I$.
Every collection of suspension data gives rise to a model for the
suspension of $X$, namely the pushout $C_+\amalg_{QX} C_-$.

\medskip

 In many places in mathematics one learns how
to handle technical details and then immediately starts to leave them
in the background, seemingly ignored.   
Our discussion of canonical
constructions is one of these instances.  While there are real issues
that require attention whenever such constructions appear, giving
{\it complete\/} details in proofs quickly becomes an obstruction
rather than an aid to comprehension; such
details are best left to the reader.  Certain canonical equivalences
will be ubiquitous throughout the rest of these appendices---a clear
example is in the statement of Lemma~\ref{le:meld1}, but most often the
equivalences are appearing with less acknowledgement.  The present
section was meant to provide a kind of ``global'' acknowledgement that
this is what is going on.

\subsection{Joins}
\label{se:join1}
\mbox{}

We now construct the join of two objects and establish a connection
between the join and the splitting maps $\chi$ from Appendix~\ref{se:stable-split}.

\begin{defn}
Given objects $X$ and $Y$, the join $X*Y$ is the homotopy
colimit of the diagram $X\lla X\times Y \lra Y$.
\end{defn}

Note that the diagram
\[ \xymatrix{
X \ar[d] & X\times Y\ar[d]\ar[l]\ar[r] & Y\ar[d] \\
{*} & X\times Y\ar[l]\ar[r] & {*}
}
\]
yields a canonical map $X*Y\llra{\gamma} \Sigma(X\times Y)$. 

\begin{lemma}
\label{lem:join-we}
The composite 
\[
X*Y\llra{\gamma} \Sigma(X\times Y) \llra{\Sigma p} \Sigma(X\Smash Y)
\]
is a weak equivalence.
\end{lemma}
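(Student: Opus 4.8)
The plan is to realize both $X*Y$ and $\Sigma(X\Smash Y)$ as homotopy pushouts of cone-diagrams, to exhibit $\Sigma p\circ\gamma$ (up to a self-equivalence of the target) as the map induced on homotopy pushouts by a levelwise map of such diagrams, and then to compute the homotopy cofiber of that induced map. First I would replace the span $[X\lla X\times Y\lra Y]$ defining $X*Y$ by the weakly equivalent span $[CX\times Y\lla X\times Y\lra X\times CY]$, where $C$ denotes the reduced cone: the legs are cofibrations between cofibrant objects, so the homotopy pushout is the ordinary pushout $(CX\times Y)\cup_{X\times Y}(X\times CY)$, and projecting the cones onto their cone points exhibits it as a model for $X*Y$. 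Similarly, using the identifications $C(X\Smash Y)\iso CX\Smash Y\iso X\Smash CY$, I would realize $\Sigma(X\Smash Y)$ as the homotopy pushout of $[CX\Smash Y\lla X\Smash Y\lra X\Smash CY]$. The quotient maps $W\times Z\lra W\Smash Z$ assemble into a levelwise map from the first span to the second, hence induce a map $\mu\colon X*Y\lra \Sigma(X\Smash Y)$.

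Next I would identify $\mu$ with $\Sigma p\circ\gamma$ up to a self-equivalence of $\Sigma(X\Smash Y)$. This is the ``canonical construction'' bookkeeping of Appendix~\ref{se:canon}: collapsing the contractible factors $CX\Smash Y$ and $X\Smash CY$ is a levelwise weak equivalence from the second span onto $[*\lla X\Smash Y\lra *]$, and composing with the levelwise map defining $\mu$ gives the levelwise map whose middle component is the quotient $p\colon X\times Y\to X\Smash Y$ and whose outer components are constant. On the other hand, $\gamma$ is by definition induced by collapsing the two cone factors of the first span onto $[*\lla X\times Y\lra *]$, and $\Sigma p$ is induced by the quotient in the middle term; so the same levelwise map into $[*\lla X\Smash Y\lra *]$ also computes $\Sigma p\circ\gamma$. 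Hence $\mu$ and $\Sigma p\circ\gamma$ differ only by the invertible levelwise equivalence used to collapse the cones, and in particular one is a weak equivalence if and only if the other is.

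Finally I would show that $\mu$ has weakly contractible homotopy cofiber. Homotopy cofibers are themselves homotopy colimits, so they commute with homotopy pushouts: $\operatorname{cofib}(\mu)$ is the homotopy pushout of the levelwise homotopy cofibers of the quotient maps $W\times Z\to W\Smash Z$. Each such quotient is the cofiber map of the inclusion $W\vee Z\hookrightarrow W\times Z$, so its homotopy cofiber is $\Sigma(W\vee Z)$; plugging in and using $CX\he *\he CY$ identifies $\operatorname{cofib}(\mu)$ with the homotopy pushout of $[\Sigma Y\lla \Sigma X\vee \Sigma Y\lra \Sigma X]$ in which both legs are the evident projections. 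That homotopy pushout is the cofiber of the induced map $\Sigma X\vee\Sigma Y\to \Sigma Y\vee\Sigma X$, which interchanges the two wedge summands up to sign and is therefore an isomorphism; so its cofiber, hence $\operatorname{cofib}(\mu)$, is contractible. Passing to $\Si$ — which is all the body of the paper needs, since the join only ever enters through orientations of suspension spectra — a map with null homotopy cofiber is an isomorphism in $\Ho(\Motspectra)$, so $\Si\mu$, and hence $\Si(\Sigma p\circ\gamma)$, is an isomorphism; when $X$ and $Y$ are sufficiently connected, as in all the applications here, the same holds unstably by relative Hurewicz. The main obstacle is the middle step: making the comparison of $\mu$ with $\Sigma p\circ\gamma$ precise enough that the conclusion about $\mu$ genuinely transfers to the composite named in the statement, since this is exactly where the subtleties of canonical isomorphisms live; the homotopy-theoretic content of the other two steps is routine.
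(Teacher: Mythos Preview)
Your approach is the same as the paper's---both use the levelwise cofiber sequence $W\vee Z\hookrightarrow W\times Z\twoheadrightarrow W\Smash Z$ applied to the span $[CX\times Y\lla X\times Y\lra X\times CY]$---but you run it on the wrong end of that sequence and thereby lose the unstable statement. Your two stable-only steps are: (i) identifying the homotopy pushout of $[\Sigma Y\lla \Sigma X\vee\Sigma Y\lra\Sigma X]$ with the cofiber of a map $\Sigma X\vee\Sigma Y\to\Sigma Y\vee\Sigma X$ (unstably there is no such map; you are implicitly using $\vee=\times$), and (ii) inferring that a map with contractible homotopy cofiber is a weak equivalence (false unstably without connectivity hypotheses---think of an acyclic space with nontrivial $\pi_1$ mapping to a point). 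Your closing hedge is honest but does not recover the full lemma, and the connectivity hypotheses are not obviously met by motivic spheres like $S^{1,1}=(\A^1-0)$.

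The fix is exactly what the paper does: instead of taking the levelwise \emph{cofiber} of $W\times Z\to W\Smash Z$, take the levelwise \emph{fiber}, i.e.\ add the row $[X\vee CY\lla X\vee Y\lra CX\vee Y]$ on top. Its honest pushout (both legs are cofibrations) is $CX\vee CY$, manifestly contractible; so the map $X*Y\to\Sigma(X\Smash Y)$ induced by the lower two rows is the quotient by a contractible cofibrant subobject, hence an unstable weak equivalence with no connectivity needed. Your identification of $\mu$ with $(\Sigma p)\gamma$ is fine and matches the paper's second paragraph.
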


\begin{proof}
Let $X \cof CX$ and $Y \cof CY$
be cofibrations with contractible target. 
Consider the diagram
\[ \xymatrix{
 X \Wedge CY \ar@{ >->}[d] & X\Wedge Y \ar@{ >->}[r]\ar@{ >->}[d]\ar@{ >->}[l] &
CX\Wedge Y \ar@{ >->}[d] \\
X\times CY \ar[d] & X\times Y \ar[d] \ar@{ >->}[l] \ar@{ >->}[r] &
CX\times Y\ar[d]
\\
X\Smash CY & X\Smash Y \ar@{ >->}[r]\ar@{ >->}[l]& CX\Smash Y.
}
\]
The pushout of  each row is a model for the homotopy pushout, because
of the horizontal cofibrations.  The pushout of the top row is
$CX\Wedge CY$; the pushout of the middle row is a model for $X*Y$, and
the pushout of the last row is a model for $\Sigma(X\Smash Y)$ because
both $X\Smash CY$ and $CX\Smash Y$ are contractible. 
 
The columns of the diagram are homotopy cofiber sequences, so taking
homotopy pushouts of each row gives a new homotopy cofiber sequence
\[
CX\Wedge CY \cof X*Y \ra \Sigma(X\Smash Y).
\]
But $CX\Wedge CY$ is contractible, so the second map is a weak
equivalence.  

There is an evident weak equivalence between the two diagrams
\[ 
\xymatrixrowsep{1pc}\xymatrix{X\times CY\ar[d] & X\times
Y\ar[r]\ar[l]\ar[d] 
& CX\times Y \ar[d]\\
X\Smash CY & X\Smash Y\ar[r]\ar[l] & CX\Smash Y
}
\qquad\qquad 
\xymatrix{
X\ar[d] & X\times Y\ar[d]\ar[r]\ar[l] & Y\ar[d]\\
{*} & X\Smash  Y \ar[r]\ar[l] & {*}
}
\]
mapping the diagram on the left to the one on the right.
On taking homotopy pushouts of the rows, the first diagram
gives the weak equivalence of the previous paragraph,
while the second diagram gives $(\Sigma p) \gamma$.
\end{proof}

Proposition \ref{pr:chi-model} below
shows that $\gamma$ is a model for $\chi$, 
once we identify $X*Y$ with $\Sigma(X\Smash Y)$.

\begin{prop}
\label{pr:chi-model}
The map $\chi$ is equal to
the composition
\[
 \Sigma(X\Smash Y) \llra{\simeq} X*Y \llra{\gamma} \Sigma(X\times Y),
\]
where the first map is the homotopy inverse to $(\Sigma p)\gamma$.
\end{prop}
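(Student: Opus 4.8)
The plan is to apply the characterization of $\chi$ from Proposition~\ref{pr:chi_stable}: the map $\chi\colon \Sigma(X\Smash Y)\to\Sigma(X\times Y)$ (or, stably, $X\Smash Y\to X\times Y$) is the unique homotopy class such that $p\chi=\id$ and $\pi_1\chi=\pi_2\chi=0$. So if $c$ denotes the composite $\Sigma(X\Smash Y)\xrightarrow{\simeq} X*Y \xrightarrow{\gamma}\Sigma(X\times Y)$, it suffices to verify these three identities for $c$. The first map in $c$ is by definition the homotopy inverse to $(\Sigma p)\gamma$, which is a weak equivalence by Lemma~\ref{lem:join-we}.

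First I would check $p\,c=\id$. Composing $c$ with $\Sigma p$ gives $(\Sigma p)\gamma$ precomposed with the homotopy inverse of $(\Sigma p)\gamma$; this is the identity on $\Sigma(X\Smash Y)$ essentially by construction, using Lemma~\ref{lem:join-we} to know that $(\Sigma p)\gamma$ is an equivalence in the first place. Next I would check $\pi_1 c=\pi_2 c=0$. For this, note that $\pi_1\gamma$ and $\pi_2\gamma$ are maps $X*Y\to\Sigma(X\times Y)\to\Sigma X$ (resp. $\Sigma Y$), and by the explicit description of $\gamma$ as coming from the map of pushout diagrams
\[
\xymatrix{
X \ar[d] & X\times Y\ar[d]\ar[l]\ar[r] & Y\ar[d] \\
{*} & X\times Y\ar[l]\ar[r] & {*}
}
\]
the composite $\Sigma\pi_1\circ\gamma$ is induced by the map of diagrams in which $X\times Y\to X$ is $\pi_1$ while the outer objects map to $*$. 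But this map of diagrams factors through the diagram $[X \leftarrow X\times Y \rightarrow *]$ whose homotopy colimit is the (contractible, since $X\leftarrow X\times Y$ is ``half'' of a cone) cofiber $\Sigma(*)=*$; more simply, $\Sigma\pi_1\circ\gamma$ factors through the map $X*Y\to X*\{*\}$ which is null since $X*\{*\}$ is contractible. Symmetrically for $\pi_2$. Hence $c$ satisfies the two vanishing conditions.

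Having verified all three conditions, the uniqueness clause of Proposition~\ref{pr:chi_stable} forces $c=\chi$, which is exactly the claim. The main obstacle I anticipate is being careful about the "canonical" nature of the map $\gamma$ and the identification $X*Y\simeq\Sigma(X\Smash Y)$: one must make sure that the maps $\pi_i\gamma$ are computed using the \emph{specific} diagram-level description of $\gamma$ (via the map of pushout squares above), so that the factorization through a contractible join $X*\{*\}$ or $\{*\}*Y$ is legitimate, rather than just asserted up to some ambiguous homotopy. The discussion of canonical constructions in Section~\ref{se:canon}, together with working at the level of diagrams as recommended there, is what makes this rigorous; once that bookkeeping is in place, the three computations are routine.
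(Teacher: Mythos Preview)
Your approach is exactly the paper's: apply the characterization from Proposition~\ref{pr:chi_stable}, note that $(\Sigma p)\chi'=\id$ is immediate from the definition, and show each $(\Sigma\pi_i)\gamma$ is null by factoring through a contractible intermediate object. The paper writes this factorization as the three-row diagram
\[
\xymatrixrowsep{1pc}\xymatrix{
X\ar[d] & X\times Y\ar[r]\ar[l]\ar[d] & Y\ar[d] \\
X\ar[d] & X \ar[r]\ar[l]\ar[d] & {*}\ar[d] \\
{*} & X \ar[r]\ar[l] & {*},
}
\]
whose middle row has contractible homotopy pushout; your phrasing via the map $X*Y\to X*\{*\}$ is the same thing, since $X*\{*\}$ is precisely that middle row.

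One small correction: your first attempt at the factorization, through the diagram $[X \leftarrow X\times Y \rightarrow *]$, does not work as stated---that homotopy pushout is the cofiber of the projection $X\times Y\to X$, which is not contractible in general (take $X=*$). You need the middle object to be $X$, not $X\times Y$, which is exactly what your ``more simply'' alternative via $X*\{*\}$ accomplishes. Drop the first version and keep the second.
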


\begin{proof}
Let $\chi'$ be the composition under consideration.
By Proposition~\ref{pr:chi_stable}, it suffices
to show that $(\Sigma p)\chi'$ is the identity on $\Sigma(X\Smash Y)$
and that $(\Sigma \pi_1 + \Sigma \pi_2)\chi'$ is zero.
The first of these is immediate from the definition of $\chi'$.
For the second, observe that $(\Sigma \pi_1)\gamma$ and
$(\Sigma \pi_2)\gamma$ are both zero.  For example, in the first case this
follows from the diagram
\[
\xymatrixrowsep{1pc}\xymatrix{
X\ar[d] & X\times Y\ar[r]\ar[l]\ar[d] & Y\ar[d] \\
X\ar[d] & X \ar[r]\ar[l]\ar[d] & {*}\ar[d] \\
{*} & X \ar[r]\ar[l] & {*}.
}
\]
Upon taking homotopy colimits of the rows,
the diagram induces $(\Sigma \pi_1)\gamma$, but the homotopy
colimit of the middle row is contractible.
\end{proof}

We will also need the following simple result.

\begin{prop}
\label{pr:join-hocolim-commute}
Let $D\colon I\ra \sPre(\Sm/k)$ be a diagram of motivic spaces, and
let $X$ be a fixed motivic space.  Then there is a canonical
equivalence between $\hocolim_I [X*D_i]$ and $X*(\hocolim_I D)$.  
\end{prop}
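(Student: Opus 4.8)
The plan is to realize both sides as homotopy colimits of a single diagram over a product of indexing categories and then interchange the order of the two homotopy colimits. By definition, $X*D_i$ is the homotopy colimit over the pushout category $\mathcal{P}=(0 \la 1 \ra 2)$ of the diagram $[\,X \la X\times D_i \ra D_i\,]$, whose two maps are the projections; letting $i$ range over $I$ assembles these into a single functor $I\times\mathcal{P}\ra\sPre(\Sm/k)$. The standard associativity of homotopy colimits (interchanging $\hocolim_I$ and $\hocolim_\mathcal{P}$) then gives a canonical equivalence
\[
\hocolim_I (X*D_i)\;\he\;\hocolim_\mathcal{P}\bigl[\,\hocolim_I c_X \;\la\; \hocolim_I(X\times D_i)\;\ra\;\hocolim_I D\,\bigr],
\]
where $c_X$ is the constant diagram at $X$.

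Next I would identify the three vertices of the inner $\mathcal{P}$-diagram. The right vertex is $\hocolim_I D$ on the nose. The functor $X\times(\blank)$ is a left adjoint --- its right adjoint is the internal mapping space, since $\sPre(\Sm/k)$ is cartesian closed --- and it preserves weak equivalences, so it preserves homotopy colimits; this yields a canonical equivalence $\hocolim_I(X\times D_i)\he X\times\hocolim_I D$ identifying the two structure maps with the projections $X\times\hocolim_I D\ra X$ and $X\times\hocolim_I D\ra\hocolim_I D$. Finally, $\hocolim_I c_X$ is canonically identified with $X$ (this is the one spot where the canonical-identification apparatus of Section~\ref{se:canon} must be invoked, and where any hypothesis on $I$ would enter; in the situations where we apply the proposition the nerve of $I$ is contractible, so the point is moot). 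Feeding these identifications back in, the inner $\mathcal{P}$-diagram becomes $[\,X \la X\times\hocolim_I D \ra \hocolim_I D\,]$, whose homotopy colimit is by definition $X*(\hocolim_I D)$.

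When $X$ and the $D_i$ are pointed --- the case relevant to the rest of these appendices --- there is a shorter argument that avoids the constant-diagram point entirely: by Lemma~\ref{lem:join-we} there is a natural equivalence $X*D_i\he\Sigma(X\Smash D_i)$, and both $\Sigma(\blank)$ and $X\Smash(\blank)$ are left adjoints and hence commute with homotopy colimits, so $\hocolim_I(X*D_i)\he\Sigma\bigl(X\Smash\hocolim_I D\bigr)\he X*(\hocolim_I D)$, using Lemma~\ref{lem:join-we} again. In either approach the real work is not in producing the displayed equivalences --- each is a routine Fubini or left-adjoint statement --- but in verifying that the resulting composite is the \emph{canonical} equivalence, independent of the auxiliary choices made in constructing the various homotopy colimits. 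Following Section~\ref{se:canon}, I would carry out the whole comparison at the level of pointwise-cofibrant diagrams, using strict colimits throughout, rather than working in $\Ho(\sPre(\Sm/k))$; the bookkeeping is lengthy but entirely routine, and this is the step one genuinely has to check.
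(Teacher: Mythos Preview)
Your main argument---assemble everything into a diagram over $I\times\mathcal{P}$, apply Fubini for homotopy colimits, and then identify the resulting inner $\mathcal{P}$-diagram with the defining diagram for $X*(\hocolim_I D)$ using that $X\times(\blank)$ commutes with homotopy colimits---is exactly the paper's own proof (the paper writes $J$ for your $\mathcal{P}$). You are in fact more careful than the paper on one point: the paper simply asserts the final identification, citing only ``homotopy colimits commute with products by a fixed space,'' whereas you correctly isolate the step $\hocolim_I c_X\he X$ and note that it needs the nerve of $I$ to be contractible---which it is in the only place the proposition is invoked (Lemma~\ref{le:meld1}, where $I$ is itself the pushout category).

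Your alternative pointed-case argument via Lemma~\ref{lem:join-we} and the fact that $\Sigma(\blank)$ and $X\Smash(\blank)$ are left adjoints is a genuine shortcut not in the paper; it sidesteps the constant-diagram issue entirely and is arguably the cleaner route for the intended application.
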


\begin{proof}
Let $J$ be the pushout indexing category $1\lla 0 \lra 2$ and let
$\overline{D}\colon I\times J\ra \sPre(\Sm/k)$ be the evident diagram
where
\[ \overline{D}(i,1)=X, \quad \overline{D}(i,0)=D(i)\times X, \quad
\overline{D}(i,2)=D(i)
\]
(the maps in $\overline{D}$ are the obvious ones).  
A
standard result in the theory of homotopy colimits gives canonical
equivalences between $\hocolim \overline{D}$, $\hocolim_I [\hocolim_J
D]$, and $\hocolim_J [\hocolim_I D]$, where the latter two expressions
indicate the evident iterated homotopy colimits along slices of
$I\times J$.  Now just observe that $\hocolim_I[\hocolim_J
D]=\hocolim_I (X*D_i)$ and $\hocolim_J[\hocolim_I D]$ is canonically
equivalent to $X*[\hocolim_I D]$; the latter uses that homotopy
colimits commute with products by a fixed space, which is a standard
property of simplicial presheaf categories (following from the analogous result
for $\sSet$).    
\end{proof}


\section{The Hopf construction}
\label{se:Hopf}

We now describe and study the Hopf construction.

\begin{defn}
\label{de:Hopf}
Let $X$, $Y$, and $Z$ be pointed spaces, and let $h\colon
X\times Y \ra Z$ be a pointed map.  The \dfn{Hopf construction} of $h$
is the map $H(h)\colon X*Y\ra \Sigma Z$ obtained by taking
homotopy colimits of the rows of the diagram
\[ 
\xymatrixrowsep{1pc}\xymatrix{
X\ar[d] & X\times Y\ar[r]\ar[l]\ar[d] & Y\ar[d] \\
{*} & Z \ar[r]\ar[l] & {*}.
}
\] 
\end{defn}

We often regard $H(h)$ as a map $\Sigma(X\Smash Y)\ra \Sigma Z$
(or as just a map $X\Smash Y\ra Z$) using the standard equivalence
$X*Y\he \Sigma(X\Smash Y)$ from Lemma \ref{lem:join-we}.

Lemma~\ref{lem:Hopf-trivial} below is a simple example of the Hopf construction.

\begin{lemma}
\label{lem:Hopf-trivial}
Let $\pi_1 \colon X \times Y \map X$ 
and $\pi_2 \colon X \times Y \map Y$ 
be the evident projection maps.  The maps
$H(\pi_1)$ and $H(\pi_2)$ are trivial.
\end{lemma}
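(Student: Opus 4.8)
The plan is to realize $H(\pi_1)$ as a map whose very construction factors through a contractible object, which forces it to be null; the argument for $H(\pi_2)$ is the mirror image. I would work entirely at the level of diagrams, as recommended in Section~\ref{se:canon}, so that no delicate choice of comparison equivalence is needed.

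First, recall from Definition~\ref{de:Hopf} that $H(\pi_1)\colon X*Y\ra \Sigma X$ is the map induced on homotopy colimits of rows by the diagram map
\[
\bigl[\, X \la X\times Y \ra Y \,\bigr] \lra \bigl[\, * \la X \ra * \,\bigr]
\]
whose left component is $X\ra *$, whose middle component is the first projection $\pi_1\colon X\times Y\ra X$ (here the lower-row middle object $X$ is the target $Z=X$), and whose right component is $Y\ra *$. I would factor this map of diagrams as the composite
\[
\bigl[\, X \la X\times Y \ra Y \,\bigr] \lra \bigl[\, X \llla{=} X \ra * \,\bigr] \lra \bigl[\, * \la X \ra * \,\bigr],
\]
where the first natural transformation is $\id_X$ on the left, $\pi_1$ in the middle, and $Y\ra *$ on the right, and the second is $X\ra *$ on the left, $\id_X$ in the middle, and $\id_*$ on the right. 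A routine check shows both squares of each transformation commute and that the composite is exactly the displayed diagram map for $H(\pi_1)$.

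Applying $\hocolim$ to the rows, functoriality gives a factorization $X*Y \ra Q \ra \Sigma X$ with $Q = \hocolim\bigl[\, X \llla{=} X \ra *\,\bigr]$. Since one leg of this span is an identity, hence a weak equivalence, the homotopy pushout $Q$ is weakly equivalent to $*$, i.e.\ contractible; so the first map $X*Y\ra Q$ is null and therefore $H(\pi_1)$ is null. For $H(\pi_2)$ one runs the identical argument, replacing the intermediate diagram by $\bigl[\, * \la Y \llra{=} Y\,\bigr]$, whose homotopy colimit is again contractible.

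The only point requiring any care---and in this case it is entirely routine---is the bookkeeping that forming homotopy colimits of diagrams is functorial on diagram maps, so a factorization upstairs descends to a factorization of the induced maps downstairs; this is immediate here precisely because the intermediate diagram uses a genuine identity morphism, so no ``canonical construction'' subtlety of the type discussed in Section~\ref{se:canon} actually intervenes. In short, there is no real obstacle: the content is simply writing the factorization of diagrams cleanly.
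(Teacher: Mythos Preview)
Your proof is correct and is essentially identical to the paper's own argument. The paper refers back to the three-row diagram at the end of the proof of Proposition~\ref{pr:chi-model}, which is exactly your factorization through the intermediate row $[\,X \llla{=} X \ra *\,]$; you have simply written out the details more explicitly.
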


\begin{proof}
Consider the diagram from the end of the proof of Proposition \ref{pr:chi-model}.
The map $H(\pi_1)$ is obtained by taking the homotopy colimits of the rows,
but the homotopy colimit of the middle row is contractible.

The argument for $H(\pi_2)$ is identical.
\end{proof}

Using the model for $\chi$ given in Proposition~\ref{pr:chi-model}, we
can give an alternative  model for the Hopf construction on $h\colon X\times Y \ra Z$:

\begin{prop} 
\label{prop:Hopf-model}
Let $X$, $Y$, and $Z$ be pointed spaces, and let $h\colon
X\times Y \ra Z$ be a pointed map.  
The Hopf construction $H(h)$ equals the composite
\[ X*Y\llra{\he} \Sigma(X\Smash Y) \llra{\chi} \Sigma(X\times Y)
\llra{\Sigma h} \Sigma Z,
\]
where the first map is the weak equivalence from Lemma \ref{lem:join-we}.
\end{prop}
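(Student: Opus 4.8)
The plan is to reduce everything to the map $\gamma\colon X*Y\ra\Sigma(X\times Y)$ studied in Appendix~\ref{se:joins}, by factoring the defining diagram of the Hopf construction. First I would observe that the diagram of Definition~\ref{de:Hopf} is the composite of two maps of pushout diagrams: the map
\[
\bigl[\, X \la X\times Y \ra Y \,\bigr]\lra\bigl[\, * \la X\times Y \ra * \,\bigr],
\]
which is $\id_{X\times Y}$ in the middle spot and the collapse maps on the outer two spots, followed by
\[
\bigl[\, * \la X\times Y \ra * \,\bigr]\lra\bigl[\, * \la Z \ra * \,\bigr],
\]
which is $h$ in the middle spot and $\id$ on the outer spots. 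Composing these two diagram maps recovers exactly the Hopf diagram (left column $X\to *$, middle column $h$, right column $Y\to *$). Taking homotopy colimits of rows is functorial, so $H(h)$ is the composite of the two induced maps on row-wise homotopy colimits. By the very definition of $\gamma$ in Section~\ref{se:join1}, the first diagram map induces $\gamma\colon X*Y\ra\Sigma(X\times Y)$, while the second induces $\Sigma h\colon\Sigma(X\times Y)\ra\Sigma Z$. Hence $H(h)=(\Sigma h)\circ\gamma$.

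Next I would invoke Proposition~\ref{pr:chi-model}, which states that $\chi$ equals the composite $\Sigma(X\Smash Y)\llra{\simeq} X*Y \llra{\gamma}\Sigma(X\times Y)$, where the first arrow is the homotopy inverse of $(\Sigma p)\gamma$ — and $(\Sigma p)\gamma$ is precisely the weak equivalence from Lemma~\ref{lem:join-we} that appears as the first map in the composite we must analyze. Therefore, in the composite of the statement,
\[
X*Y \llra{(\Sigma p)\gamma}\Sigma(X\Smash Y)\llra{\chi}\Sigma(X\times Y)\llra{\Sigma h}\Sigma Z,
\]
the first two arrows multiply to $\gamma$: indeed $\chi\circ(\Sigma p)\gamma=\gamma\circ\bigl(\iota\circ(\Sigma p)\gamma\bigr)$ where $\iota$ is the homotopy inverse of $(\Sigma p)\gamma$, and $\iota\circ(\Sigma p)\gamma\simeq\id_{X*Y}$. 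So the composite in the statement equals $(\Sigma h)\circ\gamma$, which is $H(h)$ by the first paragraph. This finishes the argument.

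The only genuine subtlety — the one flagged in Section~\ref{se:canon} — is the bookkeeping with ``canonical'' homotopy colimits: one must check that the two diagram-level factorizations above, and the identifications of the induced maps with $\gamma$ and $\Sigma h$, are compatible with the particular models of $X*Y$, $\Sigma(X\times Y)$, and $\Sigma Z$ in use, rather than merely being correct up to a non-canonical equivalence. The clean way to arrange this is to carry out the entire factorization at the level of cofibrant diagrams in $\cM^I$ (with $I$ the pushout category), where $\hocolim$ is literally $\colim$ and strictly functorial, and only pass to $\Ho(\cM)$ at the end. With that care in place the functoriality statements are immediate, and the proof collapses to the one-line composition computation above; I expect this coherence bookkeeping, rather than any homotopy-theoretic content, to be the main thing that needs attention.
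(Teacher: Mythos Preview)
Your proof is correct and follows essentially the same route as the paper: factor the Hopf diagram through the intermediate row $[\,*\la X\times Y\ra *\,]$ to obtain $H(h)=(\Sigma h)\gamma$, then invoke Proposition~\ref{pr:chi-model} to identify $\gamma$ with the composite $X*Y\llra{\he}\Sigma(X\Smash Y)\llra{\chi}\Sigma(X\times Y)$. The paper's version is terser (it displays the $3\times 3$ diagram and simply reads off the factorization), but the content is the same.
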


\begin{proof}
The diagram
\[ \xymatrixrowsep{1pc}\xymatrix{
X\ar[d] & X\times Y\ar[r]\ar[l]\ar[d] & Y\ar[d] \\
{*}\ar[d] & X\times Y \ar[r]\ar[l]\ar[d]^h & {*}\ar[d] \\
{*} & Z \ar[r]\ar[l] & {*}.
}
\]
shows that $H(h)$ is equal to $(\Sigma h)\gamma$, where $\gamma$
is from Section~\ref{se:join1}.  By
Proposition~\ref{pr:chi-model}, the composition $X*Y\llra{\he} \Sigma(X\Smash
Y)\llra{\chi} \Sigma(X\times Y)$ is equal to $\gamma$.
\end{proof}

The following simple result will save us a bit of
trouble in the body of the paper.

\begin{prop}
\label{pr:Hopf-or}
Let $f\colon
X\times Y\ra Y$ be a pointed map, where $X$ and $Y$ are oriented
homotopy spheres.  Then $[H(f)]$ does not depend on
the orientation of $Y$.
\end{prop}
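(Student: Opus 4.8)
The plan is to track how the orientations of the source and target of $H(f)$ change when one replaces the orientation of $Y$, and to see that the two changes cancel because $\pi_{0,0}(S)$ is central in $\pi_{*,*}(S)$. Keep the orientation $o_X\colon X\to S^{p,q}$ of $X$ fixed, and let $o_Y,o_Y'\colon Y\to S^{a,b}$ be two orientations of $Y$; they differ by a self-equivalence $w=o_Y'\circ o_Y^{-1}$ of $S^{a,b}$, and I set $u=[w]\in\pi_{0,0}(S)^\times$. Regard $H(f)$ as a map $X\Smash Y\to Y$ (equivalently $\Sigma(X\Smash Y)\to\Sigma Y$). By Remark~\ref{re:orient-const}, the isomorphism $o_X\Smash o_Y$ on $X\Smash Y$, followed by the canonical isomorphism, gives an orientation $\alpha\colon X\Smash Y\to S^{p+a,q+b}$, while $o_Y$ itself is the orientation $\beta\colon Y\to S^{a,b}$; by definition $[H(f)]$ is the class $[\beta\circ H(f)\circ\alpha^{-1}]\in\pi_{p,q}(S)$.

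Next I would compute the effect of the substitution $o_Y\rightsquigarrow o_Y'=w\circ o_Y$. On the target $Y$ this simply replaces $\beta$ by $w\circ\beta$. On the source it replaces $o_X\Smash o_Y$ by $o_X\Smash(w\circ o_Y)=(\id_{p,q}\Smash w)\circ(o_X\Smash o_Y)$, hence replaces $\alpha$ by $B\circ\alpha$, where $B$ is the self-equivalence of $S^{p+a,q+b}$ obtained from $\id_{p,q}\Smash w$ by conjugating with the canonical isomorphism. The crucial numerical input is that both changes are governed by the \emph{same} unit $u$: since $w\colon S^{a,b}\to S^{a,b}$ is a self-map, Remark~\ref{re:invcoh}(i) gives $[B]=[\id_{p,q}\Smash w]=[w]=u$, while trivially $[w]=u$. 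Thus with the new orientation the element becomes $[\,w\circ\beta\circ H(f)\circ\alpha^{-1}\circ B^{-1}\,]$.

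Finally I would expand this using the multiplicativity $[gh]=[g]\cdot[h]$ of the (right) bracket under composition, from \cite[Section~6.2]{D}, together with $[B^{-1}]=[B]^{-1}=u^{-1}$ (which follows from $[B]\cdot[B^{-1}]=[\id]=1$): the new element equals $[w]\cdot[\beta\circ H(f)\circ\alpha^{-1}]\cdot[B^{-1}]=u\cdot[H(f)]\cdot u^{-1}$. Since $u$ lies in the central subring $\pi_{0,0}(S)$, we have $u\cdot[H(f)]\cdot u^{-1}=[H(f)]\cdot u\cdot u^{-1}=[H(f)]$, which is the claim.

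I expect the only real obstacle to be the bookkeeping: making sure that the change on the source appears as a pre-composition and the change on the target as a post-composition, and verifying carefully that both are represented by the same unit $u$ — this is exactly what the cancellation needs, and it relies on the precise form of the induced orientations in Remark~\ref{re:orient-const} (and on the conventions of Section~\ref{se:signs}). It is worth noting that nothing about the Hopf construction itself enters beyond the fact that its source and target receive their orientations from $X$ and $Y$ in the prescribed way, so the same argument applies to any map $\Sigma(X\Smash Y)\to\Sigma Y$; the proof is really a manifestation of the earlier observation that conjugation by an element of the commutative ring $\pi_{0,0}(S)$ acts trivially.
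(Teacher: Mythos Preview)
Your argument is correct and rests on the same idea as the paper's: changing the orientation of $Y$ conjugates $[H(f)]$ by a unit in $\pi_{0,0}(S)$, and this is trivial because $\pi_{0,0}(S)$ is central.

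The presentations differ slightly. The paper internalizes the orientation change as a self-equivalence $\sigma\colon Y\to Y$, introduces the auxiliary pairing $g=\sigma f(1\times\sigma^{-1})$, and uses the commutative square
\[
\xymatrix{
X\Smash Y\ar[d]^{1\Smash\sigma}\ar[r]^\chi & X\times Y\ar[d]^{1\times\sigma}\ar[r]^-f & Y\ar[d]^\sigma\\
X\Smash Y\ar[r]^\chi & X\times Y\ar[r]^-g & Y
}
\]
(naturality of $\chi$) to obtain $[\sigma]\cdot[H(f)]=[H(g)]\cdot[\sigma]$. You instead keep $H(f)$ fixed as a map and track how the induced orientations on $X\Smash Y$ and on $Y$ change, invoking Remark~\ref{re:invcoh}(i) to see that both sides pick up the same unit $u=[w]$. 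Your route avoids any appeal to the Hopf construction or to $\chi$ beyond the fact that source and target are oriented via Remark~\ref{re:orient-const}; as you observe, it applies verbatim to any map $X\Smash Y\to Y$. The paper's route, on the other hand, makes explicit that $H(f)$ with the new orientation is literally $H(g)$ with the old one, which is a mildly stronger statement at the level of maps. Either way the punchline is the same centrality argument.
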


\begin{proof}
Let $\sigma\colon Y\ra Y$ be an automorphism in the homotopy category,
and consider the diagram
\[ \xymatrix{
X\Smash Y\ar[d]^{1\Smash \sigma}\ar[r]^\chi & X\times Y
\ar[d]^{1\times \sigma}
\ar[r]^-f & Y\ar[d]^\sigma
\\
X\Smash Y\ar[r]^\chi &  X\times Y\ar[r]^-g & Y
}
\]
where $g=\sigma f(1 \times \sigma^{-1})$.  We must show that
$[H(f)]=[H(g)]$.  But the diagram yields
\[ [\sigma]\cdot [H(f)]=[H(g)]\cdot [1\Smash \sigma]=[H(g)]\cdot
[\sigma]
\]
using Remark~\ref{re:invcoh} in the last step.  Since
$[\sigma]$ is central and invertible, $[H(f)]=[H(g)]$.  
\end{proof}

For the following proposition, regard $S^{0,0}$ as the group scheme $\Z/2$.
This result is not  needed in the present paper, but we include it
for future reference.  

\begin{prop}
The Hopf construction on the product map 
$\mu\colon S^{0,0}\times S^{0,0} \ra S^{0,0}$
represents the element $-2$ in $\pi_{0,0}(S)$.  
\end{prop}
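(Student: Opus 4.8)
The plan is to combine the stable model for the Hopf construction from Proposition~\ref{prop:Hopf-model} with the explicit computation of the splitting map $\chi$ for $S^{0,0}$ in Lemma~\ref{le:chi-calc}. Regarding $H(\mu)$ as a map $S^{0,0}\Smash S^{0,0}\to S^{0,0}$ and working in the stable category throughout, Proposition~\ref{prop:Hopf-model} identifies $H(\mu)$ with the composite
\[ S^{0,0}\Smash S^{0,0}\llra{\chi} S^{0,0}\times S^{0,0}\llra{\mu} S^{0,0}. \]
By Lemma~\ref{le:chi-calc}, $\chi$ is stably the composite $S^{0,0}\Smash S^{0,0}\llra{\iso} S^{0,0}\llra{k-i-j} S^{0,0}\times S^{0,0}$, where $i$, $j$, $k$ send the non-basepoint $-1$ to $(1,-1)$, $(-1,1)$, and $(-1,-1)$ respectively. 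Since $k-i-j$ is an element of the stable group $[S^{0,0},S^{0,0}\times S^{0,0}]$ and post-composition with $\mu$ is a group homomorphism, $H(\mu)$ equals $(\mu k-\mu i-\mu j)\circ(\iso)$.

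The proof then reduces to computing the three self-maps $\mu i$, $\mu j$, $\mu k$ of $S^{0,0}$, which is immediate from the group structure: evaluating on the non-basepoint, $\mu(i(-1))=\mu(1,-1)=-1$ and $\mu(j(-1))=\mu(-1,1)=-1$, so $\mu i=\mu j=\id_{S^{0,0}}$, whereas $\mu(k(-1))=\mu(-1,-1)=1$, so $\mu k$ is the constant map at the basepoint, i.e.\ the zero element. Hence $\mu k-\mu i-\mu j=-2\,\id_{S^{0,0}}$, so $H(\mu)$ is $-2$ times the canonical isomorphism $S^{0,0}\Smash S^{0,0}\to S^{0,0}$. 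Using the standard orientation on $S^{0,0}\Smash S^{0,0}$ (namely that very canonical isomorphism), we conclude $[H(\mu)]=-2\cdot[\id_{S^{0,0}}]=-2$ in $\pi_{0,0}(S)$.

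There is no serious obstacle here; the content is entirely in unwinding the conventions. The only point that genuinely needs the stable setting is the appearance of the formal difference $k-i-j$: since $S^{0,0}$ is not a simplicial suspension, the set $[S^{0,0},S^{0,0}\times S^{0,0}]$ carries no \emph{a priori} group structure, and it is only after passing to spectra (as in Lemma~\ref{le:chi-calc}) that the expression, and its compatibility with post-composition by $\mu$, make sense. Everything else is routine bookkeeping with the fixed identifications $S^{0,0}*S^{0,0}\simeq\Sigma(S^{0,0}\Smash S^{0,0})$ and $S^{0,0}\Smash S^{0,0}\simeq S^{0,0}$.
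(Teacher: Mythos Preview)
Your proof is correct and follows essentially the same approach as the paper: both use Proposition~\ref{prop:Hopf-model} to write $H(\mu)=\mu\chi$, invoke Lemma~\ref{le:chi-calc} to express $\chi$ as $k-i-j$ (up to the canonical isomorphism), and then compute $\mu k=0$, $\mu i=\mu j=\id$ directly. The paper phrases things with one explicit suspension while you work stably, but this is purely cosmetic.
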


\begin{proof}
The composition
\[ 
\Sigma S^{0,0}\llra{\iso} \Sigma(S^{0,0}\Smash S^{0,0}) \llra{\chi}
\Sigma(S^{0,0}\times S^{0,0}) \llra{\Sigma \mu} \Sigma S^{0,0}
\]
is the Hopf construction on $\mu$, which is equal to 
$(\Sigma\mu) (\Sigma k-\Sigma i-\Sigma j)$, where $i$, $j$, and $k$
are as in Lemma~\ref{le:chi-calc}.  This is evidently the same as
$\Sigma(\mu k)-\Sigma(\mu i)-\Sigma (\mu j)$.  But $\mu k$ is null,
and both $\mu i$ and $\mu j$ are the identity maps.  
It follows that $(\Sigma \mu)(\Sigma k-\Sigma i-\Sigma j)$ equals $-2$.
\end{proof}

\subsection{Hopf constructions of meldings}
\label{se:meld}

Let $f_1\colon X\times Y_1\ra Z_1$ and
$f_2\colon X\times Y_2\ra Z_2$ be two based maps, 
where $X$, $Y_1$, $Y_2$, $Z_1$, and $Z_2$ are pointed spaces.
Consider the diagram
\begin{myequation}
\label{eq:meld}
\xymatrixrowsep{1.3pc}\xymatrix{
X\times Y_1\ar[d]_{f_1} & 
  X\times Y_1\times Y_2\ar[d]^{\alpha(f_1,f_2)} \ar[r]\ar[l] 
  & X\times Y_2 \ar[d]^{f_2} \\
Z_1 & Z_1\times Z_2 \ar[r]\ar[l] & Z_2,
}
\end{myequation}
where the horizontal maps are the evident projections and
$\alpha(f_1,f_2)$ is the composite
\[ \xymatrixcolsep{2.5pc}\xymatrix{
X\times Y_1\times Y_2 \ar[r]^-{\Delta\times 1\times 1} 
& X\times X\times Y_1\times Y_2 \ar[r]^{1\times T \times 1} & X\times
Y_1\times X\times Y_2 \ar[r]^-{f_1\times f_2} & Z_1\times Z_2.
}
\]

\begin{defn}
\label{de:melding}
The \dfn{melding} $f_1 \# f_2$ of the pairings $f_1$ and $f_2$ is the
pairing
\[
X \times (Y_1 * Y_2) \ra \Sigma (Z_1 * Z_2)
\]
obtained by
taking homotopy pushouts of the rows of Diagram (\ref{eq:meld}).
\end{defn}

Lemma \ref{le:meld1} below gives a tool for computing Hopf constructions of
meldings.

\begin{lemma}
\label{le:meld1}
The Hopf construction $H(f_1\# f_2)\colon X*(Y_1 * Y_2)\ra
\Sigma^2 (Z_1* Z_2)$ is canonically identified with the double suspension
of the composite
\[ X\Smash Y_1\Smash Y_2 \llra{\chi} X\times Y_1\times Y_2
\llra{\alpha(f_1,f_2)} Z_1\times Z_2 \llra{p} Z_1\Smash Z_2.
\]
\end{lemma}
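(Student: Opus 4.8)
The plan is to identify $H(f_1 \# f_2)$ with the displayed composite by stacking two instances of Proposition~\ref{prop:Hopf-model} (the alternative model for the Hopf construction in terms of $\chi$) and using the coherence results for $\chi$-maps from Remark~\ref{re:chi-p}. First I would record that the melding $f_1 \# f_2$ is a pairing $X \times (Y_1*Y_2) \ra \Sigma(Z_1*Z_2)$ and apply Proposition~\ref{prop:Hopf-model} to it directly: this writes $H(f_1 \# f_2)$ as the composite
\[
X*(Y_1*Y_2) \llra{\he} \Sigma\bigl(X\Smash(Y_1*Y_2)\bigr) \llra{\chi}
\Sigma\bigl(X\times(Y_1*Y_2)\bigr) \llra{\Sigma(f_1 \# f_2)} \Sigma^2(Z_1*Z_2).
\]
So the task reduces to unwinding $\Sigma(f_1\#f_2)$, which by Definition~\ref{de:melding} is itself built by taking homotopy pushouts of the rows of Diagram~(\ref{eq:meld}).

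Next I would treat $Y_1*Y_2$ via Lemma~\ref{lem:join-we} and Proposition~\ref{pr:chi-model}, replacing $X\times(Y_1*Y_2)$ by $X\times\Sigma(Y_1\Smash Y_2)$ up to canonical equivalence, and then use the fact that smashing/producting with the fixed space $X$ commutes with the suspension of $Y_1*Y_2$ appropriately (this is where Proposition~\ref{pr:join-hocolim-commute}, applied with the fixed space $X$, does its job: it lets me pull the $X\times(\blank)$ inside the homotopy pushout defining $Y_1*Y_2$, and likewise for the target side $Z_1*Z_2$). After this rearrangement, the map $\Sigma(f_1\#f_2)\circ\chi$ becomes, levelwise, the pushout of three copies of the $\chi$-then-$f_i$ recipe, which by Proposition~\ref{prop:Hopf-model} again is exactly the Hopf construction applied slicewise. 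The composite $\chi$ on $X\Smash(Y_1*Y_2)$ followed by the internal $\chi$ coming from $Y_1*Y_2\he\Sigma(Y_1\Smash Y_2)$ assembles — by the coherence statement Remark~\ref{re:chi-p}(i), or concretely by the $2$-out-of-$3$ factorizations in the Example following that remark — into the single higher splitting $\chi\colon X\Smash Y_1\Smash Y_2\ra X\times Y_1\times Y_2$ of Definition~\ref{de:higher-chi}. Finally the three-fold map $f_1\times f_2$ preceded by $\Delta\times 1\times 1$ and $1\times T\times 1$ is precisely $\alpha(f_1,f_2)$, and the projection $Z_1\times Z_2\ra Z_1\Smash Z_2$ arises from the pushout identification $Z_1*Z_2\he\Sigma(Z_1\Smash Z_2)$; this yields the composite $p\circ\alpha(f_1,f_2)\circ\chi$ as claimed, with the two ambient suspensions accounting for the $\Sigma^2$.

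The main obstacle is bookkeeping rather than conceptual: one must check that all the identifications $Y_1*Y_2\he\Sigma(Y_1\Smash Y_2)$, $X*(\blank)\he\Sigma(X\Smash(\blank))$, and $Z_1*Z_2\he\Sigma(Z_1\Smash Z_2)$ used in the several places are the \emph{same} canonical equivalences, so that the word ``canonically identified'' in the statement is justified — i.e. that no uncontrolled sign or twist creeps in when one commutes $X\times(\blank)$ past the join and past suspension. This is exactly the kind of compatibility that Appendix~\ref{se:canon} warns must be \emph{checked}, and here it is checked by doing the argument at the level of the diagram categories $\cM^I$ (using cofibrant models for the relevant homotopy pushouts) rather than in $\Ho(\cM)$, so that the iterated-homotopy-colimit identifications of Proposition~\ref{pr:join-hocolim-commute} apply on the nose. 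Once that diagram-level picture is set up, the identification of the $\chi$-composite with the higher $\chi$ (Remark~\ref{re:chi-p}) and of the middle map with $\alpha(f_1,f_2)$ is immediate from the definitions.
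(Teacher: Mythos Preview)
Your outline is essentially correct, but the paper organizes the argument differently and more directly. Rather than first applying Proposition~\ref{prop:Hopf-model} to the outer pairing $f_1\#f_2$ and then unwinding, the paper sets up a single $3\times 3\times 2$ rectangular diagram whose back $3\times 3$ face $P_b$ encodes the Hopf-construction input for $f_1\#f_2$ and whose front face $P_f$ encodes the target suspension, with $f_1$, $\alpha(f_1,f_2)$, $f_2$ as the maps coming out of the middle column. Computing the total homotopy colimit as columns-then-rows yields $H(f_1\#f_2)$; computing it as rows-then-columns yields a map $\tilde g$ which, after one application of Proposition~\ref{pr:chi-model} and the identity $p\chi=\id$, is seen to equal $p\circ\alpha(f_1,f_2)\circ\chi$. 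Fubini for iterated homotopy colimits then furnishes the canonical identification.

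One caution about your route: Remark~\ref{re:chi-p} is stated for parenthesized words in $\times$ and $\Smash$ only, not for words involving the join. So your assertion that the outer $\chi_{X,Y_1*Y_2}$ together with the ``internal'' $\chi$ coming from $Y_1*Y_2\simeq\Sigma(Y_1\Smash Y_2)$ assembles into the three-fold $\chi$ of Definition~\ref{de:higher-chi} is not literally an instance of that remark. The claim is true, but it requires exactly the diagram-level verification you acknowledge at the end --- and once that verification is written out, you are in effect carrying out the same Fubini/cube computation the paper performs from the start. The paper's version is therefore somewhat shorter, since it never detours through Proposition~\ref{prop:Hopf-model} for the outer pairing and does the whole identification in one pass over the cube.
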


\begin{proof}
This is an exercise in the manipulation of homotopy colimits.  
Consider the rectangular box
\[ 
\xymatrixrowsep{2pc}
\xymatrixcolsep{3pc}
\xymatrix@!0{
X \ar[dr] && X\times Y_1\ar[dr]\ar[ll]\ar[rr] && Y_1\ar[dr] \\
& {*} && Z_1\ar[ll]\ar[rr] && {*} \\
X \ar[dr]\ar[uu]\ar[dd] && X\!\times \!(Y_1\!\!\times\!\!
Y_2)\ar[dr]\ar'[l][ll]
   \ar'[r][rr]\ar'[u][uu]\ar'[d][dd] && Y_1\times Y_2 \ar'[u][uu]\ar'[d][dd]\ar[dr] \\
& {*}\ar[uu]\ar[dd] && Z_1\times Z_2\ar[uu]\ar[dd] \ar[ll]\ar[rr]&&
  {*}\ar[uu]\ar[dd] \\
X \ar[dr]&& X\times Y_2\ar'[l][ll]\ar'[r][rr]\ar[dr] && Y_2 \ar[dr] \\
& {*} && Z_2\ar[ll]\ar[rr] && {*}.
}
\]
All of the horizontal and vertical maps are the evident projections,
while the three maps in the middle column coming out of the page
are $f_1$, $\alpha(f_1,f_2)$, and $f_2$.

Let $P_f$ be the front $3 \times 3$ diagram, and let
$P_b$ be the back $3 \times 3$ diagram.
The whole diagram is a  natural transformation $P_b\ra P_f$.  
We will compute the induced map  $g: \hocolim P_b \ra \hocolim P_f$
in two different ways.

We can calculate the homotopy colimit of a $3\times 3$ grid in two ways:
first take homotopy pushouts of the rows, and then take the homotopy pushout
of the resulting column; or first take the homotopy pushouts of the columns,
and then take the homotopy pushout of the resulting row.  If we first take
homotopy colimits of the columns, then we obtain the diagram
\[ 
\xymatrixrowsep{3pc}
\xymatrixcolsep{3pc}
\xymatrix@!0{
X\ar[rd] 
& & X\times (Y_1* Y_2) \ar[dr]^(0.6){f_1 \# f_2}\ar[ll] \ar[rr] & & Y_1*Y_2 \ar[dr] \\
& {*} & & Z_1* Z_2 \ar[ll]\ar[rr]  & & {*}.
}
\]
Now take homotopy colimits along the rows to yield $H(f_1\# f_2)$.  

On the other hand, if we first take homotopy colimits along the rows, then we obtain
the left face of the diagram
\[
\xymatrixrowsep{0.8pc}
\xymatrixcolsep{0.5pc}
\xymatrix{
X*Y_1 \ar[rr]\ar[dr]_{H(f_1)} & & {*} \ar[dr] \\
& \Sigma Z_1 \ar[rr] & & {*} \\
X * (Y_1 \times Y_2) \ar'[r][rr]\ar[uu]\ar[dd]\ar[dr]_{H(\alpha)} 
& & X * (Y_1 \times Y_2) \ar'[u][uu]\ar'[d][dd]\ar[dr] \\
&  \Sigma (Z_1 \times Z_2) \ar[rr]\ar[uu]\ar[dd] & & 
  \Sigma (Z_1 \times Z_2) \ar[uu]\ar[dd] \\
X * Y_2 \ar'[r][rr]\ar[dr]_{H(f_2)} & & {*} \ar[dr] \\
& \Sigma Z_2 \ar[rr] & & {*},
}
\]
where $\alpha$ is $\alpha(f_1,f_2)$.
Taking the homotopy pushouts of the columns and applying the evident canonical
isomorphisms, we get the commutative diagram
\[
\xymatrixcolsep{0.5pc}
\xymatrix{
\Sigma^2 X \Smash Y_1 \Smash Y_2 \ar[dr]_{\tilde{g}}
\ar[rr]^{\Sigma^2\bigl (1 \Smash \chi\bigr )} 
& & \Sigma^2 X \Smash (Y_1 \times Y_2) \ar[dr]^{\Sigma^2 H(\alpha)} \\
& \Sigma^2 Z_1 \Smash Z_2 \ar[rr]_\chi & & \Sigma^2 (Z_1 \times Z_2).
}
\]
Note that 
Proposition~\ref{pr:join-hocolim-commute} has been used for one of the
columns.  
Here  $\tilde{g}$ is canonically equivalent to the map 
$g\colon \hocolim P_b \ra \hocolim P_f$   that we are trying to understand,
and the horizontal maps have been identified with the help of
Proposition \ref{pr:chi-model}.

Ignoring suspensions for simplicity,
the above parallelogram allows us to write
\[
\tilde{g} = p \chi \tilde{g} = p \circ H(\alpha)\circ (1 \Smash
\chi)=p \alpha \chi (1\Smash \chi)=p\alpha\chi.
\]
We have used the symbol $\chi$ for slightly different things;
Remark~\ref{re:chi-p} makes sense of this.

In the end, the maps
$H(f_1\# f_2)$, $g$, and $\tilde{g}=p\alpha\chi$ are canonically identified.
\end{proof}

Before stating the next result we introduce a piece of notation.  If
$X$ is an oriented homotopy sphere then write $|X|$ for the unique
pair $(p,q)\in \Z^2$ such that $X\he S^{p,q}$ (uniqueness follows, for
example, by base-extending to a field and then using motivic
cohomology calculations).  Recall that if $X$ and $Y$ are
oriented homotopy spheres then the twist map $T\colon X\Smash Y
\ra Y\Smash X$ represents an element $\tau_{|X|,|Y|}=[T]$ in
$\pi_{0,0}(S)$.   As discussed in 
Section~\ref{se:signs}, if
$X\he S^{p,q}$ and $Y\he S^{s,t}$ then
$\tau_{|X|,|Y|}=\tau_{(p,q),(s,t)}=(-1)^{(p-q)(s-t)}\cdot \epsilon^{qt}$.
These elements are central in $\pi_{*,*}(S)$ because every element of 
$\pi_{0,0}(S)$ is central.
 
In analyses that involve extensive sign calculations, it is very
convenient to drop the absolute value signs and  write 
$\tau_{X,Y}$ for $\tau_{|X|,|Y|}$.  In fact we carry this to an
extreme: if the name of a homotopy sphere appears inside a subscript
for a $\tau$-expression, it is to be interpreted as the associated
bidegree.
For example, 
$\tau_{X+Y-Z,W}$ is shorthand for $\tau_{|X|+|Y|-|Z|,|W|}$.  In
practice this never leads to any confusion.  Since
$\tau_{(\blank),(\blank)}$ is bilinear and takes values in the
$2$-torsion subgroup of the multiplicative group $\pi_{0,0}(S)^\times$, 
we can also write formulas like
\[
\tau_{X+Y-Z,W}=\tau_{X,W}\tau_{Y,W}\tau_{Z,W}^{-1}\tau_{X,W}=
\tau_{X,W}\tau_{Y,W}\tau_{Z,W}\tau_{X,W}.
\]

The following proposition gives a key formula used in the paper.  The
complexity of the signs is unfortunate, but there seems to be no
avoiding this. 
The lack of symmetry in the signs on the first two terms
 is tied to the asymmetry in the signs for $[\id_{r,s}\Smash
f]$ and $[f\Smash \id_{r,s}]$ appearing in Remark~\ref{re:invcoh}. 

\begin{prop}
\label{pr:H(meld)}
Suppose given oriented homotopy spheres $X$, $Y_1$, $Y_2$, $Z_1$, and
$Z_2$, together with pointed maps $f\colon X\times Y_1\ra Z_1$ and
$g\colon X\times Y_2\ra Z_2$.   
Let $f^*$ and $g^*$ denote the composites
\[ Y_1\iso *\times Y_1 \lra X\times Y_1 \llra{f} Z_1 \qquad\text{and}\qquad
Y_2\iso *\times Y_2 \lra X\times Y_2 \llra{g} Z_2.
\]
Then $[H(f \# g)]$ equals
\begin{align*}
 \tau_{X+Y_1-Z_1,Z_2}[H(f)]\cdot [g^*] +
&\tau_{X,Y_1}\tau_{Y_1-Z_1,Z_2}[f^*]\cdot [H(g)] \\
& + \tau_{X,Y_2}\tau_{X+Y_1-Z_1,Z_2}
[H(f)]\cdot [H(g)] \cdot [\Delta_X].
\end{align*}
\end{prop}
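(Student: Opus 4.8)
The plan is to use Lemma~\ref{le:meld1} to convert the statement into an identity about the splitting maps $\chi$, to decompose the relevant $\chi$ into three pieces by a diagonal argument in the spirit of Lemma~\ref{lem:Hopf-diagonal}, and then to identify each piece while carefully tracking signs. Concretely, I would first invoke Lemma~\ref{le:meld1}: up to a double suspension, $H(f\#g)$ is canonically identified with the stable composite
\[ X\Smash Y_1\Smash Y_2 \llra{\chi} X\times Y_1\times Y_2 \llra{\alpha} Z_1\times Z_2 \llra{p} Z_1\Smash Z_2, \]
where $\chi=\chi(X,Y_1,Y_2)$ is the threefold splitting, $p$ is the collapse map, and $\alpha=\alpha(f,g)=(f\times g)\circ(1\times T\times 1)\circ(\Delta_X\times 1\times 1)$ with $T$ the twist of the middle two factors. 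So it suffices to compute $[p\alpha\chi]$, working throughout in the stable category.

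The crux is the following analogue of Lemma~\ref{lem:Hopf-diagonal}. Index the factors of $X\times X\times Y_1\times Y_2$ by $1,2,3,4$, so that $X$ occupies slots $1$ and $2$, and write $\chi_S$ for the higher splittings of Definition~\ref{de:higher-chi}. Then, as maps $X\Smash Y_1\Smash Y_2\ra X\times X\times Y_1\times Y_2$,
\[ (\Delta_X\times 1\times 1)\circ\chi(X,Y_1,Y_2) = \chi_{\{1,3,4\}} + \chi_{\{2,3,4\}} + \chi_{\{1,2,3,4\}}\circ(\Delta_X\Smash 1\Smash 1). \]
To prove this I would compute, for each nonempty $S\subseteq\{1,2,3,4\}$, the component $p_S\circ(\Delta_X\times 1\times 1)\circ\chi(X,Y_1,Y_2)$. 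The map $p_S\circ(\Delta_X\times 1\times 1)$ factors through the smash projection $X\times Y_1\times Y_2\ra(X\times Y_1\times Y_2)^{\Smash S'}$, where $S'$ is obtained from $S$ by amalgamating the two $X$-slots; hence, by the defining property of $\chi$ (Proposition~\ref{pr:chi_stable}, Definition~\ref{de:higher-chi}), the component vanishes unless $S'=\{X,Y_1,Y_2\}$, that is, unless $S\supseteq\{3,4\}$ and $S$ meets $\{1,2\}$. On the three surviving sets the component is the identity when $S=\{1,3,4\}$ or $\{2,3,4\}$, and is $\Delta_X\Smash 1\Smash 1$ when $S=\{1,2,3,4\}$ (by naturality of $p$); the displayed identity then follows from Proposition~\ref{pr:higher-split}.

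Substituting this into $[p\alpha\chi]$ expresses it as a sum of three terms, each obtained by postcomposing one of the $\chi_S$'s above with $(1\times T\times 1)$, then $f\times g$, then $p$. Pushing $(1\times T\times 1)$ through the splittings using Proposition~\ref{pr:higher-join-twist}, and then using the coherence of the $\chi$'s (Remark~\ref{re:chi-p}) together with naturality of $p$ to collapse $p\circ(f\times g)\circ\chi_S$, I expect the following. The first term, where $X$-slot $2$ has been killed so that $T$ acts on a basepoint and $g$ degenerates to the restriction $g^*$, becomes the external smash $H(f)\Smash g^*$. The second term, where $X$-slot $1$ has been killed so that $f$ degenerates to $f^*$ but $T$ now contributes an honest twist $T_{X,Y_1}$, becomes $(f^*\Smash H(g))\circ(T_{X,Y_1}\Smash 1)$. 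The third term becomes $(H(f)\Smash H(g))\circ(1\Smash T_{X,Y_1}\Smash 1)\circ(\Delta_X\Smash 1\Smash 1)$; here $H(f)$ and $H(g)$ are read as maps $X\Smash Y_1\ra Z_1$ and $X\Smash Y_2\ra Z_2$ via Proposition~\ref{prop:Hopf-model}. Applying $[\,\cdot\,]$ and converting the external smashes into products of $\pi_{*,*}(S)$ by Remark~\ref{re:invcoh}(i)--(iii) and Equation~(\ref{eq:tau}) --- using bilinearity of $\tau$ and the fact that its values are $2$-torsion to recombine the resulting signs --- yields the three summands $\tau_{X+Y_1-Z_1,Z_2}[H(f)]\cdot[g^*]$, $\ \tau_{X,Y_1}\tau_{Y_1-Z_1,Z_2}[f^*]\cdot[H(g)]$, and $\ \tau_{X,Y_2}\tau_{X+Y_1-Z_1,Z_2}[H(f)]\cdot[H(g)]\cdot[\Delta_X]$ of the claimed formula. (Proposition~\ref{pr:Hopf-or} is not logically needed here, but it reassures us that the Hopf-construction pieces are insensitive to the orientations chosen on the $Z_i$.)

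Everything is essentially mechanical once the diagonal decomposition is in hand, so the main obstacle is the sign bookkeeping in the last step: the twist inside $\alpha$, the reorderings of smash factors needed to line things up with the product structure on $\pi_{*,*}(S)$, and the left/right asymmetry of Remark~\ref{re:invcoh}(i) versus (ii) all enter, and one has to check that the first two terms genuinely acquire the asymmetric prefactors $\tau_{X+Y_1-Z_1,Z_2}$ and $\tau_{X,Y_1}\tau_{Y_1-Z_1,Z_2}$ rather than some symmetrized combination.
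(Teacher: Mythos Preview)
Your proposal is correct and follows essentially the same route as the paper. The paper also reduces via Lemma~\ref{le:meld1} to $p\alpha\chi$, then splits $(\Delta_X\times 1\times 1)\circ\chi$ into three pieces and identifies each with the corresponding summand; the only cosmetic difference is that the paper obtains the three-term decomposition by citing Lemma~\ref{lem:Hopf-diagonal} for the $2$-fold diagonal (so the pieces are written as $(j_1\times 1)\chi$, $(j_2\times 1)\chi$, and $(\chi\Delta_\Smash\times 1)\chi$), whereas you re-derive the same identity directly from the characterization of the higher splittings $\chi_S$.
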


\begin{proof}
By Lemma~\ref{le:meld1} the map $H(f \# g)$ can be modelled by the 
composite
\[ \xymatrixcolsep{2.9pc}\xymatrix{
X\Smash Y_1\Smash Y_2 \ar[r]^\chi & X\!\times\! Y_1\!\times\!
Y_2 \ar[r]^-{\Delta_\times\times 1} & X\!\times\! X\!\times\! Y_1\!\times\! Y_2
\ar[r]^{1\times T \times 1} &
X\!\times\! Y_1\!\times\! X\!\times\! Y_2 \ar[d]^{f \times g}
\\
&&  Z_1\Smash Z_2 & Z_1\times Z_2\ar[l]_p.
}
\]
We use the fact that $\Delta_\times \times 1
=(j_1\times  1) + (j_2\times 1 )
 + (\chi\Delta_\Smash \times 1)$
from Lemma \ref{lem:Hopf-diagonal}.
So our composite is the sum of
three pieces, which we analyze separately.  

\vspace{0.1in}
\noindent{\mdfn{The $j_1$-composite}}.  This piece is the 
composition along the top right in the diagram
\[ \xymatrix{
X\Smash Y_1\Smash Y_2 \ar[dr]_{\chi\Smash \id}\ar[r]^\chi & X\times Y_1\times Y_2\ar[d]^p
\ar[r]^-{f \times g^*} & Z_1\times Z_2 \ar[d]^p  \\
& (X\times Y_1)\Smash Y_2 \ar[r]_-{f\Smash g^*} & Z_1\Smash Z_2.
}
\]
The composite along the bottom is $H(f)\Smash g^*$.  
Remark~\ref{re:invcoh}(iii) yields the formula 
$[H(f)\Smash g^*]=[H(f)]\cdot [g^*]\cdot
\tau_{X+Y_1-Z_1,Z_2}$.  

\vspace{0.1in}
\noindent{\mdfn{The $j_2$-composite}}.  This piece is the 
composition along the top right in the diagram
\[ \xymatrix{
X\Smash Y_1\Smash Y_2\ar[d]_{T\Smash 1}\ar[r]^\chi & X\times Y_1\times
Y_2\ar[r]^-{T\times 1} &
Y_1\times X\times Y_2\ar[r]^-{f^*\times g}\ar[d]^p & Z_1\times Z_2 \ar[d]^p \\
Y_1\Smash X\Smash Y_2 \ar[urr]^\chi \ar[rr]^{1 \Smash \chi} && Y_1\Smash (X\times
Y_2)\ar[r]^-{f^*\Smash g}
& Z_1\Smash Z_2.
}
\]
The upper left region  commutes by
Proposition~\ref{pr:higher-join-twist}, 
and the middle region 
commutes by Remark~\ref{re:chi-p}.  The composition along the bottom left
of the diagram is
$(f^*\Smash H(g)) (T \Smash 1)$.  Remark~\ref{re:invcoh}(iii)
yields the formula 
$[f^*\Smash H(g)] [T \Smash 1]=
[f^*]\cdot [H(g)]\cdot \tau_{Y_1-Z_1,Z_2} \cdot \tau_{X,Y_1}$.

\vspace{0.1in}
\noindent{\mdfn{The $\chi\Delta_\Smash$-composite}}.  Here we examine
the diagram
\[ \xymatrix{
X\Smash Y_1\Smash Y_2 \ar[r]^\chi\ar[d]_{\Delta_\Smash \Smash 1} &
X\times Y_1\times Y_2\ar[d]^{\Delta_\Smash \times 1} \\
(X\Smash X)\Smash Y_1\Smash Y_2 \ar[d]_{1\Smash T\Smash 1}
\ar[dr]^\chi\ar[r]^\chi & (X\Smash X)\times Y_1\times
Y_2\ar[d]^\chi \\
X\Smash Y_1\Smash X\Smash Y_2\ar[d]_{\chi\Smash \chi}\ar[dr]^\chi & X\times X\times Y_1\times
Y_2 \ar[d]^{1\times T\times 1}\\
(X\times Y_1)\Smash (X\times Y_2) \ar[d]_{f\Smash g} & X\times Y_1\times X\times
Y_2 \ar[l]^-p\ar[d]^{f \times g} \\
Z_1\Smash Z_2 & Z_1\times Z_2.\ar[l]^p
}
\]
The parallelogram in the center commutes by
Proposition~\ref{pr:higher-join-twist}, and the adjacent triangles
commute by Remark~\ref{re:chi-p}.  The composition 
along the top, right, and bottom 
is the composite 
in which we are interested,
whereas the composition along the left is
$[H(f)\Smash H(g)]\cdot \tau_{X,Y_1} \cdot [\Delta_\Smash \Smash 1]$.
By Remark~\ref{re:invcoh}(ii), we have
\[ 
[\Delta_\Smash \Smash 1] = 
[\Delta_\Smash]\cdot
\tau_{-X,Y_1 + Y_2}=
[\Delta_\Smash]\cdot
\tau_{X,Y_1}\tau_{X,Y_2},
\]
and by Remark~\ref{re:invcoh}(iii) we have
\[ [H(f)\Smash H(g)]=[H(f)]\cdot [H(g)]\cdot \tau_{X+Y_1-Z_1,Z_2}.
\]
Putting everything together now gives that our
$\chi\Delta_\Smash$-composite equals
\[ [H(f)]\cdot [H(g)]\cdot \tau_{X+Y_1-Z_1,Z_2}\cdot \tau_{X,Y_1}\cdot
[\Delta_{\Smash}]\cdot \tau_{X,Y_1}\tau_{X,Y_2}.
\]
Note that the two $\tau_{X,Y_1}$ terms cancel, leading to the
expression given in the statement of the proposition.
\end{proof}


\bibliographystyle{amsalpha}

\end{document}